\newtheorem{lemma}{Lemma}
\newtheorem{remark}{Remark}
\newtheorem{proposition}[lemma]{Proposition}
\newtheorem{definition}[lemma]{Definition}
\newtheorem{theorem}[lemma]{Theorem}
\newtheorem{corollary}[lemma]{Corollary}
\newcommand{\BIN}{{\mathrm{Bin}}}
\newcommand{\tr}{{\rm tr}}
\newcommand{\dE}{\mathbb {E}}
\newcommand{\dP}{\mathbb{P}}
\newcommand{\dR}{\mathbb {R}}
\newcommand{\dC}{\mathbb {C}}
\newcommand{\cF}{\mathcal {F}}
\newcommand{\cE}{E}
\newcommand{\cG}{\mathcal {G}}
\newcommand{\cW}{\mathcal {W}}
\newcommand{\Bin}{ \mathrm{Bin}}
\newcommand{\SPAN}{ \mathrm{span}}
\newcommand{\ABS}[1]{{{\left| #1 \right|}}} 
\newcommand{\BRA}[1]{{{\left\{#1\right\}}}} 
\newcommand{\SBRA}[1]{{{\left[#1\right]}}} 
\newcommand{\PAR}[1]{{{\left(#1\right)}}} 
\newcommand{\uB}{{\underline B}}
\def\lp{(\hspace{-2pt}(}
\def\rp{)\hspace{-2pt})}
\newcommand{\1}{1\!\!{\sf I}}\newcommand{\IND}{\1}
\newcommand{\veps}{\varepsilon}
\newcommand{\si}{\sigma}
\newcommand{\Be}{\bm{e}}
\newcommand{\Bf}{\bm{f}}
\newcommand{\whp}{{w.h.p.~}}
\title{A new proof of Friedman's second eigenvalue Theorem and its extension to random lifts}
\author{Charles Bordenave\footnote{The author is supported by the research grants ANR-14-CE25-0014 and ANR-16-CE40-0024-01.}}
\begin{document}
\maketitle

\begin{abstract}
It was conjectured by Alon and proved by Friedman that a random $d$-regular graph has nearly the largest possible spectral gap, or, more precisely, the largest absolute value of the non-trivial eigenvalues of its adjacency matrix is at most $2\sqrt{d-1} +o(1)$ with probability tending to one as the size of the graph tends to infinity. We give a new  proof of this statement.  We also study related questions on random $n$-lifts of graphs  and improve a recent result by Friedman and Kohler.

{\em Keywords}: random regular graphs, spectral gap, random lift.

{\em 2010 AMS subject classification}:   	05C80, 60B20, 68R10.
\end{abstract}

\section{Introduction}

Consider a finite simple graph $G = (V,E)$ with $ n = |V|$ vertices.  Its adjacency matrix $A = A(G)$ is the matrix indexed by $V$ and defined for all $u,v \in V$ by $A_{uv} = \IND_{\{ u,v \} \in E}$ where $\IND$ denotes the indicator function. The matrix $A$ is symmetric, its eigenvalues $\mu_i = \mu_i (G)$ are real and we order them non-increasingly, 
$$
\mu_{n} \leq \ldots \leq \mu_1, 
$$
We assume further that, for some integer $d \geq 3$, the graph $G$ is $d$-regular, that is, all vertices have degree $d$. We then have that $\mu_1 = d$, that all eigenvalues have absolute value at most $d$, and $\mu_n = -d$ is equivalent to $G$ having a bipartite connected component. The absolute value of the largest non-trivial eigenvalues of $G$ is denoted by
$
\mu = \mu(G) = \max \{ |\mu_i | :  |\mu_i| < d \}.  
$
Classical statements such as Cheeger's isoperimetric inequality or Chung's diameter inequality relate small values of $\mu$ or $\mu_2$ with good expanding properties of the graph $G$, we refer for example to \cite{MR1421568,MR2247919}.  It turns out that $\mu$ cannot be made arbitrarily small. Indeed, a celebrated result of Alon-Boppana implies that for any $d$-regular graph with $n$ vertices, 
\begin{equation}\label{eq:AB}
\mu_2 (G) \geq 2 \sqrt{d-1}    -  \veps_d (n), 
\end{equation}
where, for some constant $c_d > 0$, $\veps_d (n) = c_d  /   (\log n)^2 $; see the above references and \cite{MR1124768,MR2437174,MR2679612}. Following \cite{MR939574,MR963118}, one may try to construct  graphs which achieve the Alon-Boppana bound. A graph is called Ramanujan if $\mu \leq 2 \sqrt{d-1}$. Proving the existence of Ramanujan graphs with a large number of vertices is a difficult task which has been solved for arbitrary $d \geq 3$ only recently \cite{MSS}. On the other end, it was conjectured by Alon \cite{MR875835} and proved by Friedman \cite{MR2437174} that most $d$-regular graphs are weakly Ramanujan.   More precisely, for integer $n \geq 1$, we define $\cG_d(n)$ as the set of simple $d$-regular graphs with vertex set $\{1 , \ldots , n\}$. If $nd$ is even and $d \leq n -1$, this set is non-empty (for $nd$ odd, a definition of $\cG_d(n)$ is given in \cite{MR2437174}). A uniformly sampled $d$-regular graph is then a random graph whose distribution is uniform on $\cG_d (n)$.

\begin{theorem}[Friedman's second eigenvalue Theorem \cite{MR2437174}]\label{th:Fr}
Let  $d \geq 3$ be an integer and $nd$ be even. If $G$ is uniformly distributed on $\cG_d(n)$, we have for any $\veps > 0$, 
$$
\lim_{ n \to \infty } \dP \PAR{  \mu_2 \vee |\mu_n| \geq 2 \sqrt{d-1} +\veps } = 0,  
$$
where $a \vee b = \max(a,b)$ and the limit is along any sequence going to infinity with $nd$ even.
\end{theorem}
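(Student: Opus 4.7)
The plan is to convert the problem into a spectral estimate on the non-backtracking (Hashimoto) matrix $B$ of $G$, the $2|E|\times 2|E|$ matrix indexed by oriented edges with $B_{(u,v),(x,y)}=\IND(v=x)\IND(u\neq y)$. For a $d$-regular graph, the Ihara--Bass identity relates the characteristic polynomials of $A$ and $B$ and yields a bijection: every eigenvalue $\mu$ of $A$ with $|\mu|<d$ corresponds to a pair of eigenvalues $\lambda,(d-1)/\lambda$ of $B$ with $\mu=\lambda+(d-1)/\lambda$. It therefore suffices to prove that, with probability $1-o(1)$, every non-trivial eigenvalue of $B$ satisfies $|\lambda|\leq\sqrt{d-1}+o(1)$. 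The central advantage is that $B^{k}$ simply counts non-backtracking walks of length $k$, whose combinatorial structure is far more rigid than that of the generic walks enumerated by $A^{k}$.

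To estimate $\rho(B)$, I would apply the moment/trace method under the configuration model on $nd$ half-edges; the transfer to the uniform measure on $\cG_{d}(n)$ costs only a constant factor, since simplicity has probability bounded below by a constant depending on $d$. Because $B$ is not symmetric I would work with the positive semidefinite $(B^{k})^{*}B^{k}$: for integers $k,q$ with $kq=\Theta(\log n)$,
\begin{equation*}
\rho(B)^{2kq}\;\leq\;\NRM{B^{k}}^{2q}\;\leq\;\Tr\Bigl(\bigl((B^{k})^{*}B^{k}\bigr)^{q}\Bigr),
\end{equation*}
so Markov's inequality reduces the proof to bounding the expectation of the right-hand side. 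Its expansion is a sum over cyclic $2q$-tuples of non-backtracking walks of length $k$, and the probability that a given pattern of half-edge pairings is realized in the configuration model is of order $(nd)^{-E}$, with $E$ the number of distinct edges used.

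The principal obstacle is that the raw trace is \emph{not} dominated by the ``tree-like'' walks one would like, but by rare \emph{tangles}: small subgraphs of $G$ harbouring two or more cycles in a ball of radius $O(\log n)$. To defuse them I would introduce a truncated operator $B^{(\ell)}$ whose entry $(e,f)$ counts only those non-backtracking walks of length $\ell$ from $e$ to $f$ whose trace subgraph is tangle-free, i.e.\ contains at most one cycle. The argument then splits in two. (i) A standard first-moment calculation shows that, with probability $1-o(1)$, every ball of radius $\ell=c\log_{d-1}n$ in $G$ contains at most one cycle; on this event the walks produced by $B$ and $B^{(\ell)}$ agree in the above trace. (ii) For the truncated operator the moment bound becomes tractable, since every surviving walk is supported on a tree or unicyclic subgraph and may be indexed by a finite isomorphism type.

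The heart of the proof will be the enumeration in (ii). A pair of coupled non-backtracking walks contributing to $(B^{k})^{*}B^{k}$ visits at most $2k+1$ distinct vertices, and the tangle-free constraint forces the induced edge set to satisfy $V-E\geq -1$. Organising walks by their shape (the isomorphism type of the induced labelled subgraph together with the order and orientation of visits), accounting for the reversals required to glue $(B^{k})^{*}$ with $B^{k}$, and summing over shapes should yield a bound of the form
\begin{equation*}
\EE\,\Tr\Bigl(\bigl((B^{(\ell)})^{*}B^{(\ell)}\bigr)^{q}\Bigr)\;\leq\; n\,(d-1)^{\ell q}\,(\ell q)^{O(1)}.
\end{equation*}
Letting $q\to\infty$ then gives $\rho(B^{(\ell)})\leq\sqrt{d-1}(1+o(1))$ with high probability, and combined with step (i) and the Ihara--Bass correspondence this delivers the desired bound on $\mu_{2}\vee|\mu_{n}|$. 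The hardest point is the bookkeeping of shapes for the coupled walks: one must extract exactly the factor $(d-1)^{\ell q}$ without losing an $n^{\epsilon}$ that would spoil the threshold, which is exactly where the tangle-free truncation earns its keep.
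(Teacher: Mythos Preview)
Your outline captures several of the paper's ingredients correctly---the passage to the non-backtracking matrix $B$ via Ihara--Bass, the configuration model, the tangle-free truncation $B^{(\ell)}$, and the high-probability event on which $B^{\ell}=B^{(\ell)}$. But there is a genuine gap at the heart of the argument: you never explain how to remove the Perron eigenvalue. Your displayed inequality bounds $\rho(B)$, not $|\lambda_2|$; and on the tangle-free event $B^{(\ell)}=B^{\ell}$, so $\Tr\bigl(((B^{(\ell)})^{*}B^{(\ell)})^{q}\bigr)\geq (d-1)^{2\ell q}$ deterministically, since $\IND$ is a common eigenvector of $B$ and $B^{*}$ with eigenvalue $d-1$. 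Hence your claimed bound $\EE\,\Tr\leq n(d-1)^{\ell q}(\ell q)^{O(1)}$ is false whenever $(d-1)^{\ell q}>n$, which is exactly the regime $\ell q\gg\log_{d-1}n$ you need in order that $n^{1/(2\ell q)}=1+o(1)$. (Your stated choice $kq=\Theta(\log n)$ is in any case too small for that.)

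The paper's resolution is to \emph{center before} applying the trace method. One replaces the matching indicator $M$ by $\underline M=M-(dn)^{-1}J$, defines the centered tangle-free power $\underline B^{(\ell)}$ by summing $\prod_s \underline M_{\gamma_{2s-1}\gamma_{2s}}$ over tangle-free paths, and uses a telescopic identity to write $B^{(\ell)}=\underline B^{(\ell)}+(\text{rank-one piece in }H)+(\text{remainders }R_k^{(\ell)})$. This yields $|\lambda_2|^{\ell}\leq \|\underline B^{(\ell)}\|+\tfrac{1}{dn}\sum_k\|R_k^{(\ell)}\|+\ldots$, and only now is the trace method applied, to $\underline B^{(\ell)}$ and to $R_k^{(\ell)}$. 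The payoff of centering is Proposition~\ref{le:exppath}: a product $\prod \underline M$ over a path with $a_1$ consistent weight-one edges has expectation of order $(dn)^{-a}(k/\sqrt{dn})^{a_1}$, so weight-one edges are genuinely suppressed. Without this suppression your ``bookkeeping of shapes'' cannot produce $(d-1)^{\ell q}$ rather than $(d-1)^{2\ell q}$: the dominant uncentred contribution comes from paths whose edges are visited once, and those are exactly the ones that realize the Perron eigenvector. The tangle-free truncation kills the rare events that would make the \emph{centered} expectation blow up; it does nothing by itself to separate $\lambda_2$ from $\lambda_1$.
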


The first aim of this paper is to give a new proof of this result. The argument detailed in Section \ref{sec:proofF} simplifies substantially the original proof. A careful reading of the proof actually gives the following quantitative statement: for any $0 < a < 1$, there exists $c>0$ (depending on $d$ and $a$) such that for all integers $n$ such that $\cG_d(n)$ is non-empty,
\begin{equation}\label{eq:QFried}
\dP \PAR{  \mu_2 \vee |\mu_n| \geq 2 \sqrt{d-1} +c \PAR{\frac{\log \log n }{ \log n}} ^2 } \leq  n^{-a}.  
\end{equation}

The method is robust and it has been recently applied in \cite{BLM} to random graphs with structure (stochastic block model).

The second aim of this paper is to apply this method to study similar questions on the eigenvalues of random lifts of graphs. This class of models  sheds a new light on Ramanujan-type  properties, and, since the work of Amit and Linial \cite{MR1883559,MR2216470} and Friedman \cite{MR1978881},  it has attracted a substantial attention \cite{MR2674623,AB-S,MR2799807,puder,FrKo}. To avoid any confusion in notation, we will postpone to Section \ref{sec:introbis} the precise definition of random lifts and the statement of the main results. In Section \ref{secM:proof} we will give a simpler proof of a recent result of Friedman and Kohler \cite{FrKo} and establish a weak Ramanujan property for the non-backtracking eigenvalues of a random lift of an arbitrary graph. 

 \paragraph{Notation.}
If $n$ is a positive integer, we set $[n ] = \{ 1, \ldots, n\}$. If $M \in M_n(\dR)$, $M^*$ denotes its conjugate transpose and we denote its operator norm by 
$$
\| M \| = \sup_{x \in \dR^n, x \ne 0} \frac{\| M x\|_ 2}{\|x \|_2}. 
$$

For positive sequences $a_n$, $b_n$, we will use the standard notation $a_n \sim b_n$ (if $\lim_{ n \to \infty} a_n / b_n =  1$),  $a_n = O(b_n)$ (if $\limsup_{n \to \infty} a_n / b_n < \infty$) and $a_n = o(b_n)$ (if $\lim_{n \to \infty} a_n / b_n  = 0$).  Finally, we shall write that an event $\Omega_n$ holds {\em with high probability}, \whp for short, if  $\dP ( \Omega_n^c)  =o(1)$.

\section{Proof of  Theorem \ref{th:Fr}}
\label{sec:proofF}

\subsection{Overview of the proof}
\label{subsec:ovpf}

Let us describe the strategy of proof of Theorem \ref{th:Fr} and its main difficulties.  Following F{\"u}redi and Koml{\'o}s  \cite{MR637828} and Broder and Shamir \cite{BrSh}, a natural strategy is to estimate the trace of a high power of the adjacency matrix. Namely, if we manage to prove that \whp 
\begin{equation}\label{eq:trAk}
\tr  ( A^k )  \leq  d^k +  n \PAR{ 2 \sqrt{d-1} + o(1) }^k. 
\end{equation}
for some even integer $k = k(n)$ such that $k \gg \log n $ then Theorem \ref{th:Fr} would follow. Indeed, from the spectral Theorem, \eqref{eq:trAk} implies that \whp 
$$
\mu_2^k + \mu_n ^k \leq \tr (A^k )  -d^k \leq n   \PAR{ 2 \sqrt{d-1} + o(1) }^k.
$$ 
Therefore, \whp
$$
 \mu_2 \vee |\mu_n| \leq n^{1/k} \PAR{ 2 \sqrt{d-1} + o(1) } = 2 \sqrt{d-1} + o(1),
$$
where the last equality comes from $n^{1/k} = 1 + o(1)$. From Serre \cite{MR1396897},  we note that for any $\veps > 0$, there is a positive proportion of the eigenvalues of $A$ which are larger than $2 \sqrt{d-1} - \veps$. This explains the necessary presence of the factor $n$ on the right-hand side of \eqref{eq:trAk}. Observe also that the entries of the matrix $A^k$ count the number of paths of length $k$ between two vertices. Since $k \gg \log n$, we are interested in the asymptotic number of closed paths of length $k$ when $k$ is much larger than the typical diameter of the graph. 

To avoid the presence of $d^k$ on the right-hand side of \eqref{eq:trAk}, we may project $A$ onto the orthogonal complement of the eigenspace associated to $\mu_1 =d$ and then compute the trace. If  $J$ is the $n\times n$ matrix with all entries equal to $1$, we should then prove that \whp for some even $k$, $k \gg \log n$, 
\begin{equation}\label{eq:trAk2}
\tr (A^k) - d^k  = \tr \PAR{ A - \frac{d}{n} J  } ^k \leq   n \PAR{ 2 \sqrt{d-1} + o(1) }^k.
\end{equation}

The main difficulty hidden behind Friedman's Theorem \ref{th:Fr} is that statements \eqref{eq:trAk}-\eqref{eq:trAk2} do not hold in expectation for $k \gg \log n$. This is due to the presence of subgraphs in the graph which occur with polynomially small probability. For example, it follows from McKay  \cite{MR681916} that for $n$ large enough, the graph contains as subgraph the complete graph with $d+1$ vertices with probability at least $n^{- c}$  for some explicit $c > 0$. On this event, say $\Omega$, the graph is disconnected and $\mu_2 = d$. Hence, for $k$ even, 
$$
\dE \tr (A^k) - d^k = \dE \tr \PAR{ A - \frac{d}{n} J  } ^k \geq d^k \dP ( \Omega) \geq d^k n^{-c}. 
$$
For $k \gg \log n$, the right-hand side is much larger than $n \PAR{ 2 \sqrt{d-1} + o(1) }^k$. The event $\Omega$ is only an example among other unlikely events which prevent statement  \eqref{eq:trAk}-\eqref{eq:trAk2} to hold in expectation, see \cite{MR2437174} for a more detailed treatment of this key issue. In \cite{MR2437174}, the subgraphs which are responsible for the large expectation of the trace are called {\em tangles}. In this paper, we will use a  simpler definition of the word tangle (Definition \ref{def2}).

The proof is organized as follows. First, as in the original Friedman's argument, we will study the spectrum of the non-backtracking matrix $B$ of the graph instead of its adjacency matrix $A$.  Through the Ihara-Bass formula,  the eigenvalues of $A$ and $B$ are related by a quadratic equation. It is easier from a combinatorial viewpoint to count the non-backtracking paths which will appear naturally when taking powers of the matrix $B$. This step will be performed in \S \ref{subsec:NB}: will restate Friedman's Theorem in terms of the second largest eigenvalue of the non-backtracking matrix of the random configuration model.

We will not directly apply the high trace method to $B$. We shall fix some integer $\ell$ of order $\log n$. The second largest eigenvalue of $B$ in absolute value, say $\lambda_2$, satisfies, 
$$
\ABS{\lambda_2}^\ell  \leq \max_{\langle x , \chi \rangle   = 0, x \ne 0} \frac{\| B^\ell x \|_2}{\|x\|_2}, 
$$
where $\chi$  (all entries equal to $1$) will be a common eigenvector of $B$ and $B^*$, its conjugate transpose, associated to their largest eigenvalue. 
We will then use the crucial fact that \whp the graph is free of tangles (forthcoming Lemma \ref{le:tangle}). On this event, we will have the matrix identity
$$
B^{\ell} = B^{(\ell)},
$$
where $B^{(\ell)}$ is the matrix
obtained from $B^\ell$ by discarding non-backtracking walks that encounter a
tangle.  Thanks to elementary linear algebra, we will then project the matrix $B^{(\ell)}$ onto the orthogonal complement of the vector $\chi$ and give a deterministic upper bound  of
 $$
  \max_{\langle x , \chi \rangle   = 0, x \ne 0} \frac{\| B^{(\ell)} x \|_2}{\|x\|_2}
 $$
in terms of the operator norms of new matrices which will be expressed as weighted paths of length at most $\ell$. This step is done in \S \ref{subsec:PD}. It is inspired from  Massouli\'e \cite{M13} and was further developed in \cite{BLM}.

In the remainder of the proof, we will aim at using the  high trace method to upper bound the operator norms of these new matrices of weighted paths of length at most $\ell$: if $C$ is such matrix, we will write
\begin{equation}\label{eq:illC}
\dE \| C \|^{2m}  = \dE \| C C^* \|^m  \leq \dE \tr \PAR{ C C^* }^m   
\end{equation}
for some integer $m$ of order $\log n / \log \log n$. By construction, the expression on the right-hand side is an expected contribution of some weighted paths of lengths $k = 2 m \ell$ of order $(\log n )^2 / \log \log n$.  We will thus haved reached paths of length of size $k = 2 m \ell \gg \log n$ by using an intermediary step where we modify the matrix $B^\ell$ in order that it vanishes on tangles.

The study of the expected contribution of weighted paths in \eqref{eq:illC} will have a probabilistic and a combinatorial part.  The necessary probabilistic computations on the configuration model are gathered in \S \ref{subsec:CM}. We will notably estimate the expectation of a single weighted path of polynomial length thanks to an exact representation in terms of a special function.  In  \S \ref{subsec:PC}, we will use these computations together with combinatorial bounds on non-backtracking paths to deduce sharp bounds on our operator norms. The success of this step will essentially rely on the fact that the contributions of tangles vanish in $B^{(\ell)}$. Finally, in \S  \ref{subsec:end}, we gather all these ingredients to conclude our proof of Theorem \ref{th:Fr}.

\subsection{The non-backtracking matrix of the configuration model}
\label{subsec:NB}

In this subsection, we restate Theorem \ref{th:Fr} in terms of the spectral gap of the non-backtracking matrix. For the forthcoming probabilistic analysis, we define in a slightly unusual way this non-backtracking matrix. It is tuned to the configuration model. This probabilistic model is closely related to the uniform distribution on $\cG_d(n)$ and it is simple enough to allow  explicit computation; we refer to \cite{MR1864966}.  To this end, we define the finite sets
$$
V= [n] \quad \hbox{ and }  \quad \vec E = [n] \times [d].
$$
An element of $V$ will be called a vertex and an element of $\vec E$, a half-edge. The subset of $\vec E$
\begin{equation}\label{eq:defEv}
\vec E(v) = \{v \} \times [d], 
\end{equation}
is thought as a set of half-edges attached to the vertex $v \in V$.
If  $X$ is a finite set of even cardinality, we define $M(X)$ as the set of perfect matchings of $X$, that is permutations $\si$ of $X$ such that  for all $x \in X$, $\si^2 (x)= x$ and $\si(x) \ne x$. If $\sigma \in M(\vec E)$, we can classically associate a multigraph $G = G (\sigma) $, where a {\em multigraph} can have multiple edges between the same pair of vertices and loops, edges that connect a vertex with itself. This multigraph $G(\sigma)$ is defined through its adjacency matrix $A \in M_n (\dR)$, by the formula for all $u, v \in V$,
$$
A_{uv} = A_{vu}  = \sum_{i = 1} ^d \sum_{j = 1} ^d \IND ( \si(u,i) =( v,j) ) = \sum_{i = 1} ^d \IND ( \si(u,i) \in \vec E(v)).
$$
Graphically, $G(\sigma)$ is the multigraph obtained by gluing the half-edges into edges according to the matching map $\sigma$.

Recall that $\cG_d(n)$ is the set of simple $d$-regular graphs on the vertex set $V$. Observe that $G  = G(\sigma)  \in \cG_d (n)$ if and only if for all $u \ne v \in V$, $A_{uu} = 0$ and $A_{uv} \in \{ 0,1\}$, equivalently   $\si(\vec E(u) ) \cap \vec E(u)= \emptyset$ (no loops) and $|\si(\vec E(u) ) \cap \vec E(v) | \in \{0,1\}$ (no multiple edges). It is easy to check that if $\sigma$ is uniformly distributed on $M(\vec E)$ then the conditional probability measure of $G(\sigma)$ given $\{ G(\sigma) \in \cG_d (n)\}$  is the uniform measure on $\cG_d(n)$ (that is, for any $g \in \cG_d (n)$, $
\dP \PAR{ G(\sigma ) = g }$ does not depend on $g$). Importantly, from \cite[Theorem 2.16]{MR1864966}, the following holds 
\begin{equation}\label{eq:Poiloop}
\lim_{n \to \infty} \dP ( G(\sigma ) \in \cG_d (n) ) =  e^{-(d^2-1)/4}.
\end{equation}

The Hashimoto's non-backtracking matrix $B$ of $G$ is an endomorphism of $\dR^{\vec E}$ defined in matrix form, for $e = (u,i)$, $f = (v,j)$ by
$$
B_{ef} = \IND ( \si (e) \in \vec E(v) \backslash \{ f \} ). 
$$
There is an alternative expression for $B$. Let $M = M(\sigma)$ be the permutation matrix associated to $\sigma$, defined for all $e, f \in \vec E$ by
\begin{equation}\label{eq:defM}
M_{e f} = M_ {f e} = \IND ( \sigma(e) =  f).
\end{equation}
Let $N$ be the endomorphism of $\dR^{\vec E}$ defined in matrix form, for $e = (u,i)$, $f = (v,j)$ by 
\begin{equation}\label{eq:defN}
N_{e f}  = N_{fe} = \IND ( u = v ; i \ne j).
\end{equation}
Since $(MN)_{ef} = \sum_g M_{eg} N_{gf}$, we get easily
\begin{equation} \label{eq:defBMN}
B = M N.
\end{equation}

If $m = nd$, we denote by $\lambda_1 \geq |\lambda_2| \geq \ldots \geq |\lambda_m|$ the eigenvalues of $B$ (we index the eigenvalues of $B$ of equal absolute values in an arbitrary way). Let $\chi\in \dR^{\vec E}$ be the vector with all entries equal to $1$. Observe that by construction $N$ is symmetric, $N \chi = (d-1) \chi$ and $M \chi = M^* \chi = \chi$. We deduce that 
\begin{equation}\label{eq:Perron}
B \chi= (d-1) \chi \quad \hbox{ and } \quad B^* \chi= (d-1) \chi, 
\end{equation} 
Hence, the Perron eigenvalue of $B$ is
$$
\lambda_1 = d-1.
$$
The Ihara-Bass formula asserts that if $G \in \cG_d(n)$ and $r= |\vec E| / 2 - |V| =  m / 2 - n$, 
\begin{equation}\label{eq:IharaBass}
\det( I_{\vec E} - B z ) = ( 1  - z^2) ^{r} \det(  I_V - A z + (d-1)z ^2 I_V ), 
\end{equation}
for a proof, we refer to \cite{MR1749978,MR2768284}. We use this formula as a dictionary between the spectra of $A$ and $B$. If $\si(A)$ and $\si(B)$ are the set of eigenvalues of $A$ and $B$, we get 
\begin{equation*}\label{eq:rosetta}
\si(B) = \BRA{\pm 1} \cup \BRA{ \lambda : \lambda^2 - \mu \lambda + ( d-1) = 0 , \mu \in \sigma(A) }.
\end{equation*}
Consequently, it is straightforward to check that if $\mu \in \sigma(A)$ with $|\mu| = 2 \sqrt{d-1} (1+\delta)$ and $\delta \geq 0$, then there exists a real $ \lambda \in \sigma(B)$ with $|\lambda| = \sqrt{d-1} ( 1 + \delta + \sqrt{ \delta ( 2 + \delta)}) \geq \sqrt{d-1} (1 + \sqrt\delta)$.  Hence,  from \eqref{eq:Poiloop}, Theorem \ref{th:Fr} is implied by the following statement.
\begin{theorem}\label{th:FrNB}
Let  $d \geq 3$ be an integer and $nd$ be even. Let $\si$ be uniformly distributed on $M(\vec E)$ and $\lambda_2$ be the second largest eigenvalue of $B$ in absolute value. For any $\veps >0$, 
$$
\lim_{n \to \infty} \dP \PAR{| \lambda_2 | \geq \sqrt{d-1} + \veps }= 0,
$$
where the limit is along any sequence going to infinity with $nd$ even.
\end{theorem}

The remainder of this section is dedicated to the proof of Theorem \ref{th:FrNB}. We remark that in order to prove \eqref{eq:QFried}, we will prove that for any $0 < a < 1$, there exists $c>0$ (depending on $d$ and $a$) such that for all $n \geq 3$ with $nd$ even,
\begin{equation}\label{eq:QFriedB}
\dP \PAR{  |\lambda_2| \geq  \sqrt{d-1} +c\frac{\log \log n }{ \log n}} \leq  n^{-a}.  
\end{equation}

\subsection{Path decomposition}
\label{subsec:PD}

In this subsection, we fix two positive integers $n,d$ and set $\vec E = [n] \times [d]$ as above. Let $\sigma \in M(\vec E)$ be a perfect matching of $\vec E$. We consider the multigraph $G = G(\sigma)$ and its non-backtracking matrix $B = B(\sigma)$ defined in \eqref{eq:defBMN}. Our aim is to derive a deterministic upper bound on  the second eigenvalue of $B$ (in forthcoming Proposition \ref{le:decompBl}).

In the following, we endow $\dC^n$ with the usual inner product and denote by $\perp$ the orthogonal complement. We start with an elementary algebraic lemma.  
\begin{lemma}\label{le:HR}
Let $R, S \in M_n (\dC)$ such that  $\mathrm{im}( S ) \subset \ker (R)$ and $\mathrm{im}(S^*) \subset \ker (R)$ where $S^*$ is the conjugate transpose of $S$. Then, if $\lambda$ is an eigenvalue of $S +R$ and is not an eigenvalue of $S$,
$$
|\lambda| \leq  \max_{x \in \ker (S), x \ne 0} \frac{\|(S + R)x\|_2}{\|x \|_2}.
$$
\end{lemma}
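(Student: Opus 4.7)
My plan is to extract two algebraic consequences from the two hypotheses and combine them. First, the condition $\mathrm{im}(S) \subset \ker R$ is the same as $RS = 0$, which gives the identity $R(S+R) = R^{2}$. I would apply this identity to an eigenvector $v$ of $S+R$ associated with $\lambda$: one then gets $\lambda R v = R(S+R)v = R^{2}v$, so that $Rv$ (if nonzero) is an eigenvector of $R$ with the same eigenvalue $\lambda$. The case $Rv = 0$ is easily ruled out, since it would force $Sv = (S+R)v = \lambda v$, contradicting $\lambda \notin \sigma(S)$. This already produces $|\lambda| \leq \|R\|$.

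The second hypothesis $\mathrm{im}(S^{*}) \subset \ker R$ I would use to identify the operator norm $\|R\|$ with the supremum appearing in the statement. Because $R$ vanishes on $\mathrm{im}(S^{*}) = (\ker S)^{\perp}$, one has $R = RP$, where $P$ denotes the orthogonal projection onto $\ker S$. Consequently, for every $v \neq 0$, setting $x = Pv \in \ker S$, I get $Rv = Rx$ and $\|x\|_{2} \leq \|v\|_{2}$, so the operator norm of $R$ is already attained on $\ker S$. Since $(S+R)x = Rx$ whenever $x \in \ker S$, this gives
\[
\|R\| \;=\; \max_{x \in \ker S}\frac{\|Rx\|_{2}}{\|x\|_{2}} \;=\; \max_{x \in \ker S}\frac{\|(S+R)x\|_{2}}{\|x\|_{2}},
\]
and combining with the previous bound closes the proof.

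There is no genuinely hard step here. The only point that deserves a pause is noticing the identity $R(S+R) = R^{2}$: once one has it, the rest is essentially bookkeeping. An alternative route that decomposes $v$ orthogonally along $\ker S$ and $\mathrm{im}(S^{*})$ runs into the nuisance that the pieces $Sy$ and $Rx$ produced by such a splitting are not a priori orthogonal, so the algebraic approach above is preferable.
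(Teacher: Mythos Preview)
Your proof is correct and follows essentially the same two-step plan as the paper: first show that $\lambda$ must be an eigenvalue of $R$ (hence $|\lambda|\le\|R\|$), then use $\mathrm{im}(S^*)=(\ker S)^\perp\subset\ker R$ to identify $\|R\|$ with the supremum over $\ker S$. The only difference is in the first step: you argue directly on an eigenvector via the identity $R(S+R)=R^2$, whereas the paper uses the determinant factorization $\det(S+R-\lambda)=\det(S-\lambda)\det(I+R(S-\lambda)^{-1})$ together with $R(S-\lambda)^{-1}=-\lambda^{-1}R$; your route is arguably cleaner and avoids separating out the case $\lambda=0$.
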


\begin{proof}
If $\lambda \ne 0$ is an eigenvalue of $S+R$ and is not eigenvalue of $S$ then it is an eigenvalue of $R$ (indeed, we have $\det ( S + R - \lambda )  = \det ( S - \lambda) \det ( I +  R (S - \lambda)^{-1} )$  and $R (S- \lambda)^{-1} = -  \lambda^{-1} R $ since $\mathrm{im}( S ) \subset \ker (R)$). Consequently, there exists $x \ne 0$ such that $|\lambda| \leq \| R x \|_2 / \|x \|_2$. Now, we write $x  = y + z$, with $y \in \ker (S)$ and $z \in  \ker (S)^\perp$. Since $\ker (S)^\perp =  \mathrm{im}(S^*)  \subset \ker (R)$, we get $ R x =  R y   = (S + R)y$.  Finally, $\|y \|_2  \leq \|x\|_2$ and we get $|\lambda| \leq \|  (S+ R) y  \|_2 / \| y \|_2$. 
\end{proof}

Consider again the non-backtracking matrix $B = B(\sigma)$. We fix a positive integer $\ell$. From \eqref{eq:Perron}, we may apply Lemma \ref{le:HR} to the symmetric matrix $S  = ( d-1) ^\ell \chi \chi^* / ( n d)$ and $R = B^\ell - S$. We find the inequality, 
\begin{equation}\label{eq:basicl2}
| \lambda_2 | \leq \sup_{x : \langle x , \chi \rangle  = 0 , \| x \|_2 = 1}   \| B^{\ell} x \|_2^{1/ \ell},
\end{equation}
where we recall that $\lambda_2$ is the second largest eigenvalue of $B$ in absolute value. The right-hand side of the above expression can be studied by an expansion of paths in the graph.  We introduce some definitions for the  sequences $ \gamma = (\gamma_1, \ldots, \gamma_{2\ell+1} ) \in \vec E^{2\ell+1}$ which appear when we express the entries of $B^{\ell}$ as a count of non-backtracking paths in the graph, see Figure \ref{fig:Gamma}.

\begin{definition}\label{def1}
For a positive integer $k$, let $ \gamma = (\gamma_1, \ldots, \gamma_{k} ) \in \vec E^{k}$, with $\gamma_t = ( v_t, i_t)$. 
\begin{enumerate}[-]
\item 
We define the set of {\em visited vertices} and of {\em unordered pairs of half-edges} of $\gamma$ to be, respectively, the sets  $V_\gamma = \{ v_{t}  :    t\in [k] \}$ and $\cE_\gamma = \{ \{ \gamma_{2t-1}, \gamma_{2t} \}  : 1 \leq t \leq k/2\}$. We denote by $G_\gamma$ the multigraph with vertex set $V_\gamma$ and edges given  by $\cE_\gamma$: where each element $ \{ (u,i) , (v,j) \} \in \cE_\gamma$ is viewed as an edge in $G_\gamma$ between $u$ and $v$.  
\item
The sequence $\gamma \in \vec E^{k}$ is a {\em non-backtracking path} if for all $t \geq 1$, $v_{2t+1} = v_{2t}$ and $\gamma_{2t+1} \ne \gamma_{2t}$ (that is $N_{\gamma_{2t} \gamma_{2t+1}} = 1$, where $N$ was defined by \eqref{eq:defN}). If $k = 2\ell +1$, the subset of non-backtracking  paths in $\vec E^{2\ell+1}$ is denoted by $\Gamma^{\ell}$.  If $e, f \in \vec E$, we denote by $\Gamma^{\ell}_{ef}$   paths in $\Gamma^{\ell}$ such that $\gamma_1 = e$, $\gamma_{2\ell +1} = f$, and similarly for $\vec E^{k} _{ef}$. 

\end{enumerate}
\end{definition}

\begin{figure}[htb]
\begin{center}  
\resizebox{5.5cm}{!}{
\begin{tikzpicture}[main node/.style={circle, draw , fill = lightgray, text = black, thick}]
\node[main node]  at (0,0) (1) {1} ;
\node[main node] at (2,0) (2) {2} ;
\node[main node] at (4,0) (3) {3} ;
\node[main node] at (2,2 ) (5) {5} ;
\node[main node] at (4,2) (4) {4} ;

\draw[cyan, -,ultra thick] (1) to (2) ; 
\draw[ cyan, -,ultra thick] (2) to (3) ; 
 \draw[ cyan, -,ultra thick] [out = 70 , in = -70] (3) to (4) ; 
\draw[cyan, -,ultra thick] [out = 110 , in = -110]  (3) to (4) ;
 \draw[cyan,  -, ultra thick] (4) to (5) ;
 \draw[cyan,  -, ultra thick] (5) to (2) ;  
  \draw[ cyan, -,ultra thick] [out = 60 , in = 0] (1) to (0,1)[out = 180 , in = 120] to (1)  ;

  \node[text = black!80] at (0.3,0.5) {1} ; 
  \node[text = black!80] at (-0.3,0.5) {2} ; 
  \node[text = black!80] at (0.5,0) {1} ; 
  \node[text = black!80] at (1.5,0) {2} ; 
  \node[text = black!80] at (2.5,0) {1} ; 
  \node[text = black!80] at (3.5,0) {1} ; 
  \node[text = black!80] at (4.22,0.5) {2} ; 
 \node[text = black!80] at (4.22,1.5) {1} ; 
  \node[text = black!80] at (3.78,0.5) {3} ; 
 \node[text = black!80] at (3.78,1.5) {2} ; 
  \node[text = black!80] at (3.5,2) {2} ; 
  \node[text = black!80] at (2.5,2) {1} ; 
  \node[text = black!80] at (2,1.5) {2} ; 
  \node[text = black!80] at (2,0.5) {3} ; 

\end{tikzpicture}
}
{\small$$ \gamma =   (1,1) (1,2) (1,1)   (2,2)  (2,1)  ( 3,1)  (3,2)   (4,1)  (4,2)  (3,3)  (3,2) (4,1) (4,2) (5,1) (5,2) (2,3) (2,1)  (3,1)   $$} 
\vspace{-25pt}
\caption{A non-backtracking path $\gamma \in \vec E^{18}$  and its associated graph $G_\gamma$. We have $V_\gamma = [5]$ and $\cE_\gamma = \{\{ (1,1)(1,2) \}, \{ (1,1)(2,2) \} ,\{ (2,1)(3,1) \},\{ (3,2)(4,1)\} , \{ (4,2)(3,3) \},  \{ (4,2)(5,1) \}, \{ (5,2)(2,3) \}\}$.} \label{fig:Gamma}

\end{center}\end{figure}

 We use the convention that a product over an empty set is equal to $1$ and the sum over an empty set is $0$. By construction, since $B = MN$, where $N$ was defined in \eqref{eq:defN}, we find that
$$
(B^{\ell}) _{e f} =  \sum_{\gamma \in \vec E^{2\ell+1} _{e f}} \prod_{s=1}^{\ell} M_{\gamma_{2s-1} \gamma_{2s}} N_{\gamma_{2s} \gamma_{2s +1} } = \sum_{\gamma \in \Gamma^{\ell} _{e f}} \prod_{s=1}^{\ell} M_{\gamma_{2s-1} \gamma_{2s}},
$$
where $M =M(\sigma)$ is the permutation matrix associated to $\sigma$  defined by \eqref{eq:defM}.  Note that, in the above expression for $B^\ell$, we have pulled apart the configuration and combinatorial parts: the set $\Gamma^{\ell}$ does not depend on $\sigma$, only the summand depends on it.  We set 
\begin{equation}\label{eq:defW}
\underline M_{ef } = M_{ef}- \frac 1 {dn}.
\end{equation}
Observe that $\underline M$ is the orthogonal projection of $M$ onto $\chi^\perp$. Also, $N$ is symmetric and $\chi$ is an eigenvector, so it preserves $\chi^\perp$. Hence, setting  $\underline B = \underline M N$, we get from \eqref{eq:defBMN} that, if $x \in \chi^\perp$, 
\begin{equation}\label{eq:BBBk}
B^\ell x = \underline B^\ell x. 
\end{equation}
Moreover, using $(\underline B )^\ell = (\underline M N)^\ell$, we find
\begin{equation}\label{eq:defdefub}
(\underline B^{\ell}) _{e f} = \sum_{\gamma \in \Gamma^{\ell} _{e f}} \prod_{s=1}^{\ell} \underline M_{\gamma_{2s-1} \gamma_{2s}}.
\end{equation}
The matrix $\underline B$ will not be used in our analysis. As pointed in \S \ref{subsec:ovpf},  there are events of polynomially small probability which have a dominant influence on the expected value of $B^\ell$ or $\underline B^\ell$. We will first reduce  the above sum over $\Gamma^{\ell} _{e f}$ to a sum over a smaller subset. We will only afterward project onto $\chi^\perp$. This will create some extra remainder terms.

In the following definition, a {\em neighborhood
of radius $\ell$} in a multigraph is the subgraph spanned  by vertices at graphical distance at most $\ell$ from
some fixed vertex. Following \cite{MR2437174,BL,MNS,BLM}, we introduce a central definition. 
\begin{definition}\label{def2}
A multigraph $H$ is {\em tangle-free} if it contains at most one cycle  (loops and multiple edges count as cycles); $H$ is {\em $\ell$-tangle-free} if every neighborhood of radius $\ell$ in $H$ contains at most one cycle. Otherwise, $H$ is tangled or $\ell$-tangled. We say that $\gamma \in \vec E^k$ is tangle-free or tangled if $G_\gamma$ is. Finally,  we use $F^{\ell}$ and $F^{\ell} _{e f}$ to respectively  denote the subsets of tangle-free paths in $\Gamma^{\ell}$ and $\Gamma^{\ell}_{ef}$.
\end{definition}

For example, the path $\gamma$ in Figure \ref{fig:Gamma} is tangled. The following matrices  play a central role in the following analysis. 

\begin{definition}
For each integer $\ell \geq 1$, we introduce the matrices $B^{(\ell)} $ and $\underline B^{(\ell)}$ in $\dR^{\vec E}$, defined for all $e,f \in \vec E$ by 
\begin{eqnarray}\label{eq:defD1}
(B^{(\ell)} ) _{ef} &= &\sum_{\gamma \in F^{\ell} _{e f}} \prod_{s=1}^{\ell} M_{\gamma_{2s-1} \gamma_{2s}}\\
\label{eq:defD}
( \underline B^{(\ell)}) _{ef} &=& \sum_{\gamma \in F^{\ell} _{e f}}   \prod_{s=1}^{\ell} \underline M_{\gamma_{2s-1} \gamma_{2s}}.
\end{eqnarray}
For $\ell = 0$, $B^{(\ell)}= \underline B^{(\ell)}$ equal to the identity matrix. 
\end{definition}

Obviously, if $G$ is $\ell$-tangle-free then
\begin{equation}\label{eq:BkBk}
B^{\ell}   = B^{(\ell)}.
\end{equation}

Beware that it is not true that if $G$ is $\ell$-tangle-free  and $ \ell \geq 3$ then $\underline B^\ell = \underline B^{(\ell)}$ (in \eqref{eq:defdefub} and \eqref{eq:defD} the summand is the same but the sum in \eqref{eq:defdefub} is over a larger set). Nevertheless, as in \eqref{eq:BBBk}, we will now express $B^{(\ell)}x$ in terms of $\underline B^{(\ell)}x$ for all $x \in \chi^\perp$ plus some extra terms, culminating in \eqref{eq:decompBkx}. We start with the following telescopic sum decomposition:
\begin{lemma}Let $\ell$ be a positive integer. For any $e,f \in \vec E$, we have \begin{eqnarray}\label{eq:iopl}
(B^{(\ell)}) _{e f} &  =  & (\uB^{(\ell)} )_{ef}  + \frac{1}{dn} \sum_{k = 1}^\ell \sum_{\gamma \in F^{\ell} _{e f}}    p_k (\gamma),
\end{eqnarray}
where for all   $k \in[ \ell]$ and $\gamma \in \vec E^{ 2\ell+1}$ we have set
$$
p_k (\gamma) = p_k (\gamma ,\sigma) = \PAR{ \prod_{s=1}^{k-1} \underline M_{\gamma_{2s-1} \gamma_{2s}}} \PAR{\prod_{s = k+1}^\ell M_{\gamma_{2s-1} \gamma_{2s}}}.
$$
\end{lemma}
\begin{proof}
In \eqref{eq:defD1}, for all $\gamma$ in $F^{\ell} _{e f}$, we apply the identity, 
$$
\prod_{s=1}^\ell x_s = \prod_{s=1}^\ell y_s  + \sum_{k=1}^{\ell}\PAR{ \prod_{s=1}^{k-1} y_s } ( x_k - y_k) \PAR{\prod_{s = k+1}^{\ell} x_s}.
$$
to $x_s  = M_{\gamma_{2s-1} \gamma_{2s}}$ and $y_s =  \underline M_{\gamma_{2s-1} \gamma_{2s}}$. Since $x_s - y_s = 1/ (dn)$, it gives \eqref{eq:iopl}.
\end{proof}
\begin{figure}[htb]
\begin{center}  
\resizebox{12cm}{!}{
\begin{tikzpicture}[main node/.style={circle,fill , text = black, thick}]

\draw[orange , -,thick] (0.0,0)  [out = 0, in = 0] to (0.45,0.8)  ; 
\draw[orange , -,thick] (0.45,0.8) [out = 180 , in = 180]   to (0.9,0)   ; 
\draw[ -,thick] (0.9,0) to (1.1,0)  ; 
\draw[ cyan,  -,thick] (1.1,0) [out = 0, in = 0]  to (1.55,0.8)  ; 
\draw[ cyan , -,thick] (1.55,0.8) [out = 180 , in = 180]  to (2,0) ;  

\node[above]  at (0,0) (1) {\tiny{$e$}} ;
\node[above]  at (2,0) (2) {\tiny{$f$}} ;
\node[above]  at (0.85,-0.05) (3)  {\tiny{$ \gamma_{2k-1}$}} ;
\node[below]  at (1.25,0.05) (4) {\tiny{$\gamma_{2k+1}$}} ;

\draw [fill] (0,0) circle [radius=0.03] ;
\draw [fill] (0.9,0) circle [radius=0.03] ;
\draw [fill] (1.1,0) circle [radius=0.03] ;
\draw [fill] (2,0) circle [radius=0.03] ;

\draw[orange , -,thick] (3.0,0)  [out = 0, in = 0] to (3.45,0.8)  ; 
\draw[orange , -,thick] (3.45,0.8) [out = 180 , in = 180]   to (3.9,0)   ; 
\draw[ -,thick] (3.9,0) to (4.1,0)  ; 
\draw[ cyan,  -,thick] (4.1,0) [out = 0, in = 0]  to (4,0.5)  ; 
\draw[ cyan , -,thick] (4,0.5) [out = 180 , in = 90]  to (3.7,0) ;  
\draw[ cyan , -,thick] (3.7,0) [out = -90 , in = -90]  to (5,0) ;  

\node[above]  at (3,0) {\tiny{$e$}} ;
\node[above]  at (5,0)  {\tiny{$f$}} ;
\node[above]  at (3.85,-0.05)  {\tiny{$ \gamma_{2k-1}$}} ;
\node[below]  at (4.25,0.05)  {\tiny{$\gamma_{2k+1}$}} ;

\draw [fill] (3,0) circle [radius=0.03] ;
\draw [fill] (3.9,0) circle [radius=0.03] ;
\draw [fill] (4.1,0) circle [radius=0.03] ;
\draw [fill] (5,0) circle [radius=0.03] ;

\draw[orange , -,thick] (6.0,0)  [out = 0, in = 0] to (6.45,0.8)  ; 
\draw[orange , -,thick] (6.45,0.8) [out = 180 , in = 180]   to (6.9,0)   ; 
\draw[ -,thick] (6.9,0)[out = 30, in = 0]  to (6.9,0.4)  ;
 \draw[ -,thick] (6.9,0.4)[out = 180, in = 150]  to (6.9,0)  ;
\draw[ cyan,  -,thick] (6.9,0) [out = -60, in = -240]  to (8,0) ;  

\node[above]  at (6,0) {\tiny{$e$}} ;
\node[above]  at (8,0)  {\tiny{$f$}} ;
\node[above]  at (6.9,-0.05)  {\tiny{$ \gamma_{2k-1}$}} ;
\node[below]  at (7.1,0.0)  {\tiny{$\gamma_{2k+1}$}} ;

\draw [fill] (6,0) circle [radius=0.03] ;
\draw [fill] (6.9,0) circle [radius=0.03] ;
\draw [fill] (8,0) circle [radius=0.03] ;

\end{tikzpicture}}
\caption{Tangle-free paths whose union is tangled.} \label{fig:Gamma3}
\end{center}\end{figure}

\vspace{-10pt}

We now rewrite \eqref{eq:iopl} as a sum of matrix products for lower powers of $\uB^{(k)}$ and $B^{(k)}$ up to some remainder terms.  Fix $k \in [\ell]$, we decompose a path $\gamma  = (\gamma_1, \ldots, \gamma_{2\ell +1} ) \in \Gamma^{\ell}$ as a path $\gamma'=  (\gamma_1, \ldots, \gamma_{2k -1} ) \in  \Gamma ^{k-1}$, a path $\gamma''= (\gamma_{2k-1}, \gamma_{2k} , \gamma_{2k+1} ) \in \Gamma^{1}$ and a path $\gamma''' = (\gamma_{2k+1}, \ldots, \gamma_{2 \ell +1}) \in \Gamma^{\ell - k}$. If the path $\gamma$ is in $F^{\ell}$ (that is, it is tangle-free), then  the three paths are tangle-free, but the converse is not necessarily true, see Figure \ref{fig:Gamma3}. This will be the origin of the remainder terms.  For each $k\in [\ell]$, we denote by $F^{\ell}_{k}$ the set of $\gamma \in \Gamma^{\ell}$ such that, with $\gamma', \gamma'', \gamma'''$ as above, $\gamma'  \in F^{k-1}$, $\gamma'' \in F^{1} = \Gamma^1$ and $\gamma''' \in F^{k-\ell}$. For $e,f \in \vec E$, let $F^\ell_{k,ef} = F^\ell_{k} \cap \vec E^{2\ell+1}_{ef}$. We have the inclusion $F^{\ell}  \subset F^{\ell}_{k}$. We write in \eqref{eq:iopl}
\begin{eqnarray*}
\sum_{\gamma \in F^{\ell} _{e f}}   p_k(\gamma)  = \sum_{\gamma \in F^{\ell} _{k,e f}} p_k(\gamma) - \sum_{\gamma \in F^{\ell} _{k,e f} \backslash F^{\ell} _{e f}}  p_k(\gamma).
\end{eqnarray*}

We observe that the cardinality of $\Gamma^1_{ef} = F^1_{ef}$ is $d-1$. If $\chi^*$ is the conjugate transpose of $\chi$, $\chi \chi^*$ is the matrix on $\dR^{\vec E}$ with all entries equal to $1$. The rule of matrix multiplication gives 
\begin{eqnarray*}
 \sum_{\gamma \in F^{\ell} _{k,e f}}  p_k(\gamma) & = &  \sum_{a,b \in \vec E} \sum_{\gamma'\in F^{k-1}_{ea}, \gamma'' \in  F^{1}_{ab}, \gamma'''\in F^{\ell-k}_{bf} }  \PAR{ \prod_{s=1}^{k-1} \underline M_{\gamma'_{2s-1} \gamma'_{2s}}}  \PAR{\prod_{s = 1}^{\ell-k} M_{\gamma'''_{2s-1} \gamma_{2s}}} \\
 & = & (d-1)( \uB^{(k-1)} \chi \chi^*  B^{(\ell - k)} )_{ef}. 
\end{eqnarray*}

For each $k \in [\ell]$, we introduce the matrix in $\dR^{\vec E}$, defined for all $e,f \in \vec E$ by 
\begin{equation}\label{eq:defR}
(R^{(\ell)}_{k} )_{ef}   =  \sum_{\gamma \in F^{\ell} _{k,e f} \backslash F^{\ell} _{e f}} p_k(\gamma).
\end{equation}

We deduce from \eqref{eq:iopl} that \begin{eqnarray*}
B^{(\ell)}   &= & \uB^{(\ell)}   +    \frac {d-1} {dn}  \sum_{k = 1} ^{\ell}  \uB^{(k-1)} \chi \chi^* B^{(\ell - k)}  -   \frac 1 {dn}     \sum_{k = 1}^{\ell} R^{(\ell)}_{k} .
\end{eqnarray*}

Observe that if $G$ is $\ell$-tangle free, then it is also $k$-tangle free for all $k \in[ \ell]$. Hence, from \eqref{eq:Perron} and \eqref{eq:BkBk}, we find $\chi^* B^{(\ell - k)}  = \chi^* B^{\ell - k}  =  (d-1)^{\ell -k} \chi^*$. Consequently, if $G$ is $\ell$-tangle free  and $\langle x , \chi \rangle = 0$,  we find
\begin{eqnarray}
 B^{\ell} x &   = &   \uB^{(\ell)} x    -  \frac 1{d n}   \sum_{k = 1}^\ell  R^{(\ell)}_{k} x.\label{eq:decompBkx}
\end{eqnarray}
We use the triangle inequality to estimate $\| B^{\ell} x \|_2$. From \eqref{eq:basicl2}, we deduce the main result of this subsection, which is the following
proposition. 

\begin{proposition}\label{le:decompBl}
Let $\ell \geq 1$ be an integer and $\sigma \in M(\vec E)$ be such that $G(\sigma)$ is $\ell$-tangle free. Then, if $\lambda_2$ is the second largest eigenvalue of the non-backtracking operator $B = B(\sigma)$, we have
$$
|\lambda_2 | \leq  \PAR{\|  \uB^{(\ell)} \|   +  \frac 1{d n}   \sum_{k = 1}^\ell \| R^{(\ell)}_{k} \|} ^{1/\ell}.$$
\end{proposition}

\subsection{Computation on the configuration model}

\label{subsec:CM}
The configuration model allows some explicit probabilistic computation.  In the remainder of this section, $\sigma$ is uniformly distributed on $M(\vec E)$, the set of matchings on $\vec E = [n] \times [d]$ and $G = G(\sigma)$ is the corresponding multigraph.  The next lemma states that $G$ is $\ell$-tangle free if $\ell$ is not too large. It is an already  known fact, see \cite[Lemma 2.1]{MR2667423}, it can also be extracted from \cite{MR2097332}. We give a proof for completeness.

\begin{lemma}\label{le:tangle}
Let $d \geq 3$ and $\ell$ be positive integers. Let $\sigma$ be uniformly distributed on $M(\vec E)$ with $\vec E = [n] \times [d]$. Then $G = G(\sigma)$ is $\ell$-tangle free with probability $1 - O ((d-1)^{4\ell} / n)$.
\end{lemma}

\begin{proof} We fix $v \in V$ and build a process which sequentially reveals the neighborhood of $v$. Informally, this process  updates a set $D_t$ of half-edges $e \in \vec E$ which are in the neighborhood of $v$ but whose matched half-edge $\sigma(e)$ has not been revealed yet. We start by the half-edges in $\vec E(v)$ (see \eqref{eq:defEv}) and then the half-edges which share a vertex  with an half-edge in $\sigma(\vec E(v))$ and so on. Formally, at stage $0$, we set $D_0 = \vec E(v)$.  At stage $t \geq 0$, if $D_t$ is not empty, take an element $e_{t+1}$ in $D_t$ which has been added at the earliest possible stage (we break ties with lexicographic order). Let $f_{t+1} = \si(e_{t+1}) = (u_{t+1},j_{t+1})$.  If  $f_{t+1}  \in D_{t}$, we set $D_{t+1} = D_t \backslash \{ e_{t+1}, f_{t+1} \}$, and, otherwise,  $$D_{t+1} =  \PAR{D_t \cup \vec E(u_{t+1} )  } \backslash \{ e_{t+1} , f_{t+1}\}.$$
 At some stage $\tau \leq d n$, $D_\tau$ is empty, and we have explored the connected component of $v$. Before stage 
$$ T = \sum_{k = 1}^{\ell -1} d (d-1)^{k-1} = O \PAR{ (d-1)^\ell} ,$$
 we have revealed the subgraph spanned by the vertices at distance at most  $\ell$ from $v$. Also, if $v$ has two distinct cycles in its $\ell$-neighborhood, then
$
S(v) = S_{\tau \wedge T}  \geq 2, 
$
where, for $t \geq 1$,
$$
S_t =  \sum_{s= 1}^{t} \veps_{s} \quad   \hbox{ and } \quad \veps_t = \IND ( f_{t} \in D_{t-1} ). 
$$
At stage $t \geq 0$, $2t$ values of $\sigma$ have been discovered (namely $\sigma(e_s)$ and  $\sigma(f_s)$ for $1 \leq s \leq t$) and 
$$
|D_t | = d + \sum_{s=1}^t (d-2)( 1 - \veps_s)  - 2 \sum_{s=1}^t \veps_s =d (t +1) - 2 t -d S_t. 
$$

Denote by $\cF_t$ the $\sigma$-algebra generated by $(D_0, \cdots, D_{t})$ and by $\dP_{\cF_t}$ the conditional probability distribution. Then, $\tau$ is a stopping time and, if $t < \tau \wedge T$, for some constant $c >0$, 
$$
\dP_{\cF_t} ( \veps_{t+1} = 1) = \frac{|D_t| - 1 }{nd - 2 t  -1}   \leq c \PAR{ \frac{ T }{n} } = q. 
$$
Hence, for any integer $k$, $\dP( S(v) \geq k)$ is at most the probability of $[k,\infty)$ for $\Bin(T,q)$, the binomial distribution with parameters $(T,q)$. The probability that $\Bin(T,q)$ is at least $k$ is at most $q^k { T \choose k} \leq q^k T^k$ . In particular, from the union bound,  
$$
\dP \PAR{ G \hbox{ is $\ell$-tangled}}\leq \sum_{v=1} ^n  \dP ( S(v) \geq 2)  \leq \sum_{v=1} ^n  q^2 T^2   = O \PAR{ \frac{ (d-1)^{4 \ell}  }{n}}.
$$
This concludes the proof of Lemma \ref{le:tangle}. 
\end{proof}

The next crucial proposition gives a precise estimate on the fact that the variables $\underline M_{ef}$ defined by \eqref{eq:defW} are weakly dependent. They are also approximately centered since for $e\ne f$, $\dE \underline M_{ef}= 1 / (dn-1)- 1 / (dn) = O (1 / (dn)^2)$.  We first introduce some new definitions which extend Definition \ref{def1}.
\begin{definition}\label{def:defcons}
Let $k$ be a positive integer, $\gamma = ( \gamma_1, \cdots , \gamma_{2k}) \in \vec E^{2k}$ and $E_\gamma = \{ \{ \gamma_{2t-1}, \gamma_{2t} \} :  t \in [k]\}$ its visited edge set. 

\begin{enumerate}[-]
\item
The {\em multiplicity} of an half-edge $e \in \vec E $ is $m_\gamma (e) = \sum_{t=1}^{2k} \IND ( \gamma_t  = e )$. 

\item
The {\em multiplicity} of an edge $\{e, f\} \in \cE_\gamma$ is $m_{\gamma} ( \{ e,f\} )  = \sum_{t=1}^k  \IND ( \{ \gamma_{2t-1}, \gamma_{2t}\} = \{e, f\} )$. 

\item
An edge  $\{e, f\} \in \cE_\gamma$ is {\em consistent} if  $m_\gamma (e) = m_{\gamma} (f) = m_{\gamma} (\{ e, f \})$.  It is inconsistent otherwise. 
\end{enumerate}
\end{definition}

Equivalently, an edge $\{e,f\}$ of $\gamma$ is consistent if its half-edges are distinct and if they are uniquely paired together: that is, $e \ne f$ and $\{ t : e \in \{ \gamma_{2t-1}, \gamma_{2t}  \} \} =  \{ t : f \in \{ \gamma_{2t-1}, \gamma_{2t}  \} \}  = \{ t : \{ e , f\} =\{ \gamma_{2t-1}, \gamma_{2t}  \} \} $. For example, in Figure \ref{fig:Gamma}, the edges $\{ (1,1)(1,2) \}, \{ (1,1)(2,2) \}$, $\{(4,2),(3,3)\}$, $\{(4,2),(5,1)\}$ are inconsistent. We are now ready to state the main result of this subsection. 
\begin{proposition}
\label{le:exppath}
Let $\vec E = [n] \times [d]$ with $nd$ an even positive integer. Let $\sigma$ be uniformly distributed on $M(\vec E)$. There exists a universal constant $c >0$ such that for any $ \gamma \in \vec E^{2k}$ with $1 \leq k \leq \sqrt{dn}$ and any $0 \leq k_0 \leq k$, we have, 
$$
\ABS{ \dE \prod_{t= 1} ^{k_0}  \underline M_{\gamma_{2t-1} \gamma_{2t} } \prod_{t= k_0+1} ^{k}  M_{\gamma_{2t-1} \gamma_{2t} }  } \leq c\, 2^{b} \PAR{ \frac{1 }{ dn }}^{a} \PAR{ \frac{  3k   }{ \sqrt{dn} }}^{a_1}, 
$$ 
where $a = | \cE_\gamma |$,  $b$  is the number of  $t \in [k_0]$ such that $\{ \gamma_{2t-1}, \gamma_{2t}\}$ is an inconsistent edge of multiplicity $1$ in $\cE_\gamma$,  and $a_1$ is the number of  $t \in [k_0]$ such that $\{ \gamma_{2t-1}, \gamma_{2t}\}$ is a consistent edge of multiplicity $1$ in $\cE_\gamma$.\end{proposition}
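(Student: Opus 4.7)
The plan is to expand the product, exploit $M_{ef}^2 = M_{ef}$ to collapse factors edge-by-edge, and then reduce the expectation to a finite signed sum of uniform-matching probabilities that can be controlled by a confluent-hypergeometric-style estimate.

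First, I would regroup the $k$ factors according to the distinct edges $y = \{e,f\} \in \cE_\gamma$. For $y$ visited $u_y$ times via $\underline M$-factors (indices $t \leq k_0$) and $v_y$ times via $M$-factors (indices $t > k_0$), the identities $M_{ef}^2 = M_{ef}$ and $M_{ef}\, \underline M_{ef} = (1 - 1/(dn))\, M_{ef}$ collapse the local sub-product into an affine expression $\alpha_y M_{ef} + \beta_y$, with $|\alpha_y| \leq 1$ and $\beta_y = (-1/(dn))^{u_y}\, \IND(v_y = 0)$. In particular, the $a_1$ consistent weight-$1$ $\underline M$ edges are precisely those with $u_y = 1$, $v_y = 0$, for which $\alpha_y = 1$ and $\beta_y = -1/(dn)$, so the local factor collapses to $\underline M_{ef}$ itself.

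Expanding $\prod_{y \in \cE_\gamma}(\alpha_y M_{e_y f_y} + \beta_y)$ over subsets $S \subseteq \cE_\gamma$ and taking expectation reduces the task to estimating $\dE \prod_{y \in S} M_{e_y f_y}$. For a uniform matching on the $dn$ half-edges this equals $1/[(dn-1)(dn-3)\cdots(dn-2|S|+1)]$ when the pairs in $S$ are disjoint on half-edges and $0$ otherwise; the ratio of double factorials here is the ${}_1F_1$-type object alluded to in the overview, and for $|S| \leq k \leq \sqrt{dn}$ it is bounded by $(2/(dn))^{|S|}$. Assembling: each distinct edge of $\cE_\gamma$ contributes a factor of order at most $1/(dn)$, either through $\dE M_y$ (when $y \in S$) or through $|\beta_y| \leq 1/(dn)$ (when $y \notin S$), which gives the overall $(1/(dn))^a$; the $b$ inconsistent edges cause half-edge clashes that annihilate most subsets $S$, and a local enumeration of the surviving configurations at each clashing edge (at most $9$ choices: matched into $S$, absorbed into $\beta_y$, or merged with one of its conflict partners) yields the $9^b$ factor; the $a_1$ consistent weight-$1$ $\underline M$ edges force a cancellation between their $M_y$ and $\beta_y = -1/(dn)$ branches whose residue, combined with the closed-form structure of the denominators, produces the extra $(4k/\sqrt{dn})^{a_1}$.

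I expect the main obstacle to be the last factor $(4k/\sqrt{dn})^{a_1}$. A naive hope based on independent centering would give $(C/(dn))^{a_1}$, but the centerings at distinct edges are coupled through the common denominator $(dn-1)(dn-3)\cdots$ and a term-by-term triangle inequality wastes this dependence. The right tool is to keep the closed-form ${}_1F_1$-type expression through Step 2 and to bound its tail uniformly over all placements of the $a_1$ centered edges along $\gamma$; this exchanges a factor of $1/\sqrt{dn}$ per centered edge for a combinatorial factor of $k$, matching the claimed scaling exactly.
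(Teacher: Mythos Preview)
Your overall structure is sound and the edge-by-edge collapse into $\alpha_y M_{e_y f_y} + \beta_y$ is correct, but the proposal has a genuine gap precisely where you flag the main obstacle: you do not actually prove the cancellation that yields $(4k/\sqrt{dn})^{a_1}$.

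Concretely, after your subset expansion you face, for each fixed choice $S_B$ of the non-weight-$1$ edges with $|S_B|=s$, a signed sum of the form
\[
Q(s)\;=\;\sum_{j=0}^{a_1}\binom{a_1}{j}\Bigl(-\frac{1}{m}\Bigr)^{a_1-j}\frac{1}{(m'-1)(m'-3)\cdots(m'-2j+1)},\qquad m'=m-2s,\; m=dn.
\]
Every term here has magnitude of order $m^{-a_1}$, so a triangle inequality gives only $2^{a_1}m^{-a_1}$, not $m^{-a_1}(4k/\sqrt m)^{a_1}$. Obtaining the latter is a genuine estimate, not a bookkeeping matter: the paper isolates it as Lemma~\ref{le:bin}, proved via the polynomial identity $\sum_{t}\binom{k}{t}(-1)^t P(t)=0$ for $\deg P\le k-1$ together with a Taylor remainder to absorb the mismatch between the centering $1/m$ and the effective scale $1/m'$. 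Your phrase ``keep the closed-form ${}_1F_1$-type expression and bound its tail uniformly'' names the object but supplies none of this argument; without it the proof does not close.

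It is also worth noting that the paper does \emph{not} proceed by your subset expansion. Instead it conditions on $\sigma$ restricted to all half-edges outside a random set $T_1^*\subset T_1$ (the weight-$1$ edges whose half-edges the matching does not send into other edges of $\cE_\gamma$), computes the conditional expectation of $\prod_{t\in T_1^*}\underline M_{e_tf_t}$ in closed form as $(1/Z)\,\dE\prod_{y=0}^{N-1}(y-\hat m)$ with $N$ binomial, applies Lemma~\ref{le:bin}, and then pays an additional cost $\dE\,\varepsilon^{-|S_1|}\le C$ to average over the random exclusion set $S_1=T_1\setminus T_1^*$. Your direct expansion may well avoid this last averaging (since your denominators $P(|S|)$ are deterministic), which would be a genuine simplification; but the core hypergeometric cancellation is unavoidable either way and must be proved.

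Finally, your treatment of inconsistent edges (``at most $9$ local choices'') is not what the paper does and is not obviously correct as stated. The paper uses the algebraic identity $M_{y_1}M_{y_2}=0$ for two edges sharing a half-edge to split $\underline M_{y_1}^{p_1}\underline M_{y_2}^{p_2}$ (or the mixed $\underline M/M$ version) into at most $3$ terms, each with inconsistency degree strictly smaller; iterating over the total inconsistency degree $\hat b\le 2b$ gives $3^{\hat b}\le 9^b$ consistent subproblems.
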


The important part in Proposition \ref{le:exppath} is the factor $( 3k   / \sqrt{dn} )^{a_1}$. It reflects  that the variables $\underline M_{ef}$ are nearly centered and weakly dependent when $k = o (\sqrt{dn})$. We will use the Pochhammer symbol, for non-negative integers $n, k$,
$$
(n)_k = \prod_{t = 0}^{k-1} (n -t).  
$$
Recall the convention that a product over an emptyset is equal to $1$.   We start with a technical lemma which bounds an expression which will be closely related to the expectation of product of distinct $\underline M_{ef}$ in  the proof of Proposition \ref{le:exppath}.
\begin{lemma}\label{le:bin}
Let $z\geq 1$, $k \geq 1$ be an integer, $0 < p , q < 1$ and $N$ be a $\BIN ( k , p)$ variable. If  $4 ( 1 -  p/ (q ( 1 - p))) ^2   \leq  zq k ^2 \leq 1$, we have
$$
\ABS{ \dE \prod_{n=0}^{N-1} \PAR{ z n -  \frac 1 q } } \leq  4 \PAR{  3 k   \sqrt{  z q /2}} ^k. 
$$
\end{lemma}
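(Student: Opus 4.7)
My plan has three steps, beginning with an exact generating-function representation of the expectation. Expand $(1-pw)^{1/q-1}$ by the generalized binomial series and observe that
$$
\binom{1/q-1}{j}(-p w)^j \;=\; \frac{(-1)^j (1/q-1)(1/q-2)\cdots (1/q-j)}{j!}(-p)^j w^j \;=\; \frac{p^j w^j}{j!}\prod_{n=1}^j(n-1/q),
$$
so that multiplying by $e^{(1-p)w} = \sum_m (1-p)^m w^m/m!$ and reading off the coefficient of $w^k$ identifies
$$
\dE \prod_{n=1}^N(n-1/q) \;=\; k!\,[w^k] h(w), \qquad h(w) := (1-pw)^{1/q-1} e^{(1-p)w}.
$$
This is the ``confluent hypergeometric representation'' alluded to in the overview of the section.

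The second step is to bound $|h(w)|$ on the circle $\{|w|=R\}$ with $R=1/\sqrt{q}$, which lies in the domain of analyticity since $pR \leq \sqrt{q} \leq 1/\sqrt{2} < 1$ by the hypothesis $2qk^2\leq 1$. Apply the elementary inequality $\log(1+x)\leq x$ to $\log|1-pw|^2 = \log(1 - 2p\operatorname{Re}(w) + p^2 R^2)$ to obtain, on this circle,
$$
\log |h(w)|^2 \;=\; (1/q-1)\log|1-pw|^2 + 2(1-p)\operatorname{Re}(w) \;\leq\; 2(1-p/q)\operatorname{Re}(w) + (1/q-1) p^2 R^2.
$$
Because $p\leq q$ makes $1-p/q\geq 0$, the right-hand side is maximized at $\operatorname{Re}(w)=R$. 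The first term is then at most $2R(1-p/q) = 2(1-p/q)/\sqrt{q} \leq k\sqrt{2}$ by the hypothesis $|1-p/q|\leq k\sqrt{q/2}$, and the second is at most $p^2(1-q)/q^2\leq 1$ since $p\leq q$. Hence $\max_{|w|=R}|h(w)| \leq e^{k/\sqrt{2}+1/2}$.

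Cauchy's integral formula on $|w|=R$ and the trivial inequality $k!\leq k^k$ (valid for $k\geq 1$; the case $k=0$ is immediate) then give
$$
\bigl|k!\,[w^k]h(w)\bigr| \;\leq\; k!\,R^{-k}\max_{|w|=R}|h(w)| \;\leq\; k!\,q^{k/2} e^{k/\sqrt{2}+1/2} \;\leq\; e^{1/2}\bigl(e^{1/\sqrt{2}}\,k\sqrt{q}\bigr)^k,
$$
and the bound $4(3k\sqrt{q})^k$ follows from the numerical inequalities $e^{1/2}\leq 4$ and $e^{1/\sqrt{2}}\leq 3$.

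\textbf{Main obstacle.} The delicate point is the choice of contour radius $R=1/\sqrt{q}$: a smaller $R$ would fail to convert the factor $R^{-k}$ into the required $q^{k/2}$, while a larger one would approach the branch point $w=1/p$ and cause $(1-pR)^{1/q-1}$ to blow up. The two hypotheses are each used at the critical place: $2qk^2\leq 1$ keeps $R$ inside the disc of convergence, and $4(1-p/q)^2\leq 2qk^2$ is precisely what is needed to keep the linear-in-$\operatorname{Re}(w)$ term bounded by $k\sqrt{2}$ on the circle. Once the generating-function representation $h(w)$ is identified, the remainder of the proof is a careful application of $\log(1+x)\leq x$ and Cauchy's formula.
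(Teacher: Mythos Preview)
Your proof is correct and takes a genuinely different route from the paper's own argument. You establish the exact identity
\[
\dE \prod_{n=1}^N(n-1/q) \;=\; k!\,[w^k]\bigl((1-pw)^{1/q-1}e^{(1-p)w}\bigr)
\]
and then bound the coefficient by Cauchy's estimate on the circle $|w|=1/\sqrt{q}$, using $\log(1+x)\leq x$ together with the two hypotheses to control $|h(w)|$ there. The paper instead proceeds purely combinatorially: it rewrites the expectation as a polynomial in $\delta=q/(q-1)$ and $\veps=(1-p/q)/(1-p)$, exploits the orthogonality identity $\sum_{t=0}^k\binom{k}{t}(-1)^t P(t)=0$ for polynomials $P$ of degree $\leq k-1$ to kill the low-degree terms, and bounds the survivors by hand. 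The paper even notes that the expression equals $p^k U(-k,1/q-k,1/p-1)$ and that ``the lemma is then a consequence of known asymptotic,'' but opts for the elementary proof; your generating-function/Cauchy approach is essentially that analytic alternative made explicit. Your argument is shorter and more conceptual; the paper's argument is longer but entirely real-variable and self-contained.
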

 
\begin{proof}
Let $f(x) =  \dE \prod_{n=0}^{N-1} \PAR{ z n -  1/x  }$,  $\delta  = - z q $ and $\veps = 1 -  p/ (q ( 1 - p))$.  By assumption, we have  $| \veps | \leq 1/2$. We write
$$
f(q) = \sum_{t = 0} ^k {k \choose t} p^{t} ( 1- p)^{k-t}  \prod_{n=0}^{t-1} \PAR{z n -  \frac 1 q  }  = \PAR{ 1 - p}^k   \sum_{t = 0} ^k {k \choose t} ( -1 + \veps)^{t}  \prod_{n=0}^{t-1} \PAR{ 1 + \delta n},
$$
(we note that the function $f$ can be expressed in terms of the confluent hypergeometric function $U(a,b,z)$, for definition, see  \cite[(13.2.7)]{DLMF},  the lemma is then a consequence of a known asymptotic in \cite[Chapter 13]{DLMF}. We will however give a full  proof).   We write
$$
  \prod_{n=0}^{t-1} \PAR{ 1 + \delta n} = 1 + \sum_{s=1}^{t-1} \delta^s \sum_{(s)}  \prod_{i=1}^s n_i  = 1 +  \sum_{s=1}^{t-1} \delta^s P_s(t),
$$
where $\sum_{(s)}$ is the sum over all $(n_i)_{1\leq i \leq s}$ all distinct and $1 \leq n_i \leq t-1$. We observe that $t \mapsto P_s(t)$ is a polynomial of degree $2s$ in $t$. Moreover, $P_s$ vanishes at integers $0 \leq t \leq s$ and for all integers $t \geq s+1$, we have 
\begin{equation*}\label{eq:bdPst}
0 \leq P_s (t) \leq \PAR{ \sum_{n=1}^{ t-1} n }^s \leq \PAR{ \frac {t^{2}}{  2}  }^ s.
\end{equation*}
Setting $P_0 (t) =1$, we get
\begin{eqnarray}\label{eq:fpabs}
|f(q)| \leq   \sum_{s=0}^{k-1}   |\delta|^s  \ABS{ \sum_{t=0} ^k  {k \choose t} \PAR{ -  1 + \veps}^{t}  P_s(t)}.
\end{eqnarray}

We will use some cancellations in the above sum. Indeed, consider the derivative of order $m$ of
$ ( 1+ x)^k = \sum_{t=0}^{k} {k \choose t} x^t$.  It vanishes at $x = -1$ for any $0 \leq m \leq k-1$. We get that for any $0 \leq m \leq k-1$,
$$
0 =  \sum_{t=0}^{k} {k \choose t} (-1)^t (t)_m.
$$
Since $Q_m ( x )  = (x)_m$ is a monic polynomial of degree $m$, the family $(Q_0, \ldots , Q_{k-1})$ is a basis of $\dR_{k-1}[x]$, the real polynomials of degree  at most $k-1$. Hence, by linearity that for any  $P \in \dR_{k-1} [x]$, 
\begin{equation}\label{eq:PPPP}
0 =  \sum_{t=0}^{k} {k \choose t} (-1)^t P(t).
\end{equation}

Since $P_s$ is a polynomial of degree $2s$, \eqref{eq:PPPP} can be used to cancel some terms in \eqref{eq:fpabs}. First, since $| \veps | \leq 1/2$, for any $s\geq 0$, we have 
\begin{eqnarray*}
 \ABS{ \sum_{t=0} ^k  {k \choose t}  \PAR{ -  1  +  \veps}^t     P_s(t)} \leq    \sum_{t=0}^k {k \choose t} \PAR{ 3/2 }^t k^{2s}2 ^{-s}   =     (5/2) ^{k} k^{2s}2 ^{-s}
\end{eqnarray*}
where we have used that  $\sum_{t=0}^k {k \choose t} (3/2)^t = (5/2)^k$ and $\ABS{  P_s(t)} \leq (k^{2}/ 2) ^s$. It follows that \begin{eqnarray*}
I & = & \sum_{s =  \lfloor \frac{k -1}{2} \rfloor +1 }^{k-1}   | \delta| ^s  \ABS{  \sum_{t=0} ^k  {k \choose t} \PAR{ -1 +  \veps}^t   P_s(t)}\\
 & \leq & \PAR{\frac 5 2   }^k  \sum_{s =  \lfloor \frac{k -1}{2} \rfloor +1 }^{k-1}  \PAR{ \frac{| \delta|  k ^2}{2} }^s  \\
& \leq & 2   \PAR{\frac 5 2  }^k  \PAR{ \frac{| \delta|  k ^2}{2} }^{k/2},
\end{eqnarray*}
where we have used that $|\delta| k ^2 /2 = z q k ^2 / 2 \leq 1/2$ and $\sum_{s \geq r}  x^k \leq 2x^r$ if $0 \leq x \leq 1/2$. We get 
\begin{equation}\label{eq:boundI}
I \leq 2 \PAR{ ( 5 / \sqrt 8)   k \sqrt{z q}}^k.
\end{equation}

For integer $0 \leq s \leq    (k -1)/2 $, we may  exploit \eqref{eq:PPPP} as follows. We use again the binomial identity
$$
\PAR{1 - \veps}^t  = \sum_{r=0}^{t} (-\veps)^r {t \choose r} =  T_{k,  s} (t) + R_{k,s} (t),
$$
where $T_{k,s} (t) = \sum_{r=0}^{k-1 - 2s} (-\veps)^r {t \choose r}$ is a polynomial in $t$ of degree $k-1 - 2s $. Using $ | \veps| \leq 1/2 \leq 1$, we find
$$
|R_ {k,s} (t)|   =    \ABS{ \sum_{r=k - 2s}^{t}   (-\veps)^r  {t \choose r} } \leq  |  \veps|^{k-2s}  \sum_{r=k - 2s}^{t}   {t \choose r}     \leq |  \veps| ^{k - 2 s}2^{t} .
$$
Moreover,  from \eqref{eq:PPPP},   for all integers $0 \leq  s \leq   (k -1)/2  $,
\begin{eqnarray*}
 \sum_{t=0} ^k  {k \choose t} (-1+\veps)^{t}  P_s(t) =    \sum_{t=0} ^k  {k \choose t} ( -1) ^t  R_{k,s} (t) P_s(t).
\end{eqnarray*}
Hence, since $|P_s(t)| \leq k ^{2s} 2^{-s}$, for all integers $0 \leq  s \leq  (k -1)/2 $,
$$
 \ABS{ \sum_{t=0} ^k  {k \choose t} (-1+\veps)^{t}  P_s(t)} \leq  \sum_{t=0} ^k  {k \choose t}   |  \veps| ^{k - 2 s} 2^{t} k^{2s} 2^{-s}  = 3^{k}  | \veps| ^{k - 2 s}  k^{2s} 2^{-s}.
$$
We deduce that 
\begin{eqnarray*}
J    &= &   \sum_{s=0}^{ \lfloor \frac{k -1}{2} \rfloor}     |\delta|^s  \ABS{  \sum_{t=0} ^k  {k \choose t} (-1+\veps)^{t}  P_s(t)} \\
& \leq &    \PAR{ 3  |\veps|}^{k}  \sum_{s=0}^{ \lfloor \frac{k -1}{2} \rfloor}   \PAR{ \frac{|\delta| k^2}{ 2 \veps^2}} ^s \\
& \leq  & 2 \PAR{ 3 |\veps|}^{k}   \PAR{ \frac{|\delta| k^2}{  2 \veps^{2} }} ^{\frac k 2 }, \label{eq:2rfr}
\end{eqnarray*}
where at the last step, we use the assumption that $|\delta| k^2 / ( 2 \veps^2) \geq 2 $ and $\sum_{s=0}^r x^s \leq 2 x^r$ if $x \geq 2$. So finally, $J$ is  bounded by $2  \PAR{  (3/\sqrt 2) k  \sqrt { z q}  } ^k$. From \eqref{eq:fpabs}, $|f(q)| \leq I + J $  and from \eqref{eq:boundI}, this concludes the proof of the lemma. 
\end{proof}

The following simple lemma  bounds the expected product of random variables in terms of the expected product of the random variables conditioned by the other variables. 
\begin{lemma}\label{le:condexp}
Let $T$, $(X_t)_{t \geq 1}, (x_t)_{t \geq 1}$ be random variables defined on a common probability space with $T$ a non-negative integer variable and $X_t$, $x_t$ real variables. Let $\cF_t = \sigma(T , (x_s)_{s},  (X_s)_{s \ne t})$ be the $\sigma$-algebra generated by all variables but $X_t$. We assume that for all $t \geq 1$,  $\dE \SBRA{ | X_t | \bigm| \cF_t } \leq x_t$. Then,
$$
\dE \ABS{  \prod_{t=1} ^T X_t } \leq \dE \prod_{t = 1}^T x_t .  
$$ 
\end{lemma}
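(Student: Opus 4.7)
My plan is to peel off the factors $X_t$ one at a time by iterated conditioning, exploiting the fact that $T$ and $\{X_s : s \ne t\}$ are $\cF_t$-measurable. The key asymmetry in the hypotheses is that for $X_0$ we only control $|\dE(X_0 \mid \cF_0)| \leq x_0$, so $X_0$ must be handled first via a ``centered'' peel that keeps the absolute value outside the expectation, while for each $X_k$ with $k \geq 1$ the stronger bound $\dE(|X_k| \mid \cF_k) \leq x_k$ allows a straightforward absolute-value peel.

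\textbf{Stage 1 (strip $X_0$).} Since $T$ and $X_s$ for $s \ne 0$ are $\cF_0$-measurable, the quantity $P = \prod_{t=1}^T X_t$ (with the convention $P=1$ when $T=0$) is $\cF_0$-measurable. Thus $\dE(\prod_{t=0}^T X_t \mid \cF_0) = P \cdot \dE(X_0 \mid \cF_0)$, and using $|\dE(X_0\mid\cF_0)| \leq x_0$ together with $x_0 \in \cF_0$ gives $\ABS{\dE \prod_{t=0}^T X_t} \leq \dE\SBRA{x_0 \prod_{t=1}^T |X_t|}$.

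\textbf{Stage 2 (strip $X_k$ for $k \geq 1$).} Set $Y_k = \prod_{t=0}^{(k-1)\wedge T} x_t \cdot \prod_{t=k}^T |X_t|$, with the convention that empty products equal $1$. Then $Y_1$ is exactly the bound from Stage 1, and the sequence stabilizes at $Y_{T+1} = \prod_{t=0}^T x_t$. I claim $\dE Y_k \leq \dE Y_{k+1}$ for every $k \geq 1$: on $\{T < k\}$ the two coincide, whereas on $\{T \geq k\}$ every factor of $Y_k$ except $|X_k|$ is $\cF_k$-measurable and non-negative, so conditioning on $\cF_k$ and invoking $\dE(|X_k|\mid\cF_k) \leq x_k$ replaces $|X_k|$ by $x_k$, turning $Y_k$ into $Y_{k+1}$ in expectation. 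Iterating this inequality until $k = T+1$ yields $\dE Y_1 \leq \dE \prod_{t=0}^T x_t$, which combined with Stage 1 is the lemma.

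\textbf{Main difficulty.} The only delicate point is the random upper limit $T$ in the product. It is absorbed cleanly because $T \in \cF_t$ for every $t$, so $\IND(T \geq k)$ is $\cF_k$-measurable and plays the role of an on/off switch for the $k$-th peel step; without $T$ being measurable with respect to every $\cF_t$, one would not be able to commute conditional expectations with the truncation of the product.
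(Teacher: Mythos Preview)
Your proof is correct and follows essentially the same approach as the paper: strip $X_0$ first via conditioning on $\cF_0$, then peel the remaining factors one at a time by iterated conditional expectation. The only cosmetic differences are that the paper first conditions on $T$ to make it deterministic and then peels top-down (from $X_T$ to $X_1$) using auxiliary sub-$\sigma$-algebras $\cG_t=\sigma((x_s)_s,(X_s)_{s<t})\subset\cF_t$, whereas you keep $T$ random and peel bottom-up using $\cF_k$ directly with the indicator $\IND(T\geq k)$ as a switch; your variant is arguably a little cleaner.
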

\begin{proof}
By conditioning on the value of $T$, we may assume without loss of generality that $T$ is deterministic.  Let $\cG_t = \sigma ( (x_s)_s , (X_s)_{s < t})$, since $\cG_t \subset \cF_t$, we have
$$
 \dE \PAR{ \prod_{t=1}^T |X_t| } = \dE \PAR{ \prod_{t=1}^{T-1} |X_t| \,  \dE \SBRA{ |X_T| \bigm| \cG_T } } =  \dE \PAR{   \prod_{t=1}^{T-1} |X_t| \,  \dE \SBRA{ \dE \SBRA{ |X_T| \bigm|\cF_T } \bigm|  \cG_T } } 
$$
Applying our assumption, we find, since $x_T \in \cG_T$, 
$$
 \dE \PAR{  \prod_{t=1}^T |X_t| } \leq \dE \PAR{ x_T \prod_{t=1}^{T-1} |X_t|} .
$$
We then repeat the above step.
\end{proof}

\begin{proof}[Proof of Proposition \ref{le:exppath}] We will use that, if $k ( k \vee t)  \leq \alpha n$, $k \vee t \leq n/2$,  
\begin{equation}\label{eq:factpow}
(n)_k \geq   e^{-2\alpha} n^k  \quad  \hbox{ and } \quad (n - t )^k \geq e^{-2\alpha} n^k,
\end{equation}
(indeed, $(n)_k \geq (n - k)^k$ and $(n-t)^k =  n^{k} \exp ( k \log ( 1 - t /n)) \geq n^{k} \exp ( - 2 k t / n)$ since $\log (1 -x) \geq - x / (1 - x)$ for $0 \leq x  <1$). 

The proof relies on a conditional expectation argument. We set $\cE_\gamma = \{ y_1, \ldots , y_a  \}$ and $y_{t} = \{ e_t , f_t \}$. We also set $m = d n$ and
$$\vec E^* = \vec E \backslash \bigcup_{1 \leq t \leq a} \{ e_{t} , f_{t}  \}.$$
We have $| \vec E^* | \geq  m - 2 a$. The multiplicity of $y_t$ is equal to $p_t + q_t$, where $p_t$ is the multiplicity of $y_t$ in $(\gamma_1, \ldots, \gamma_{2k_0})$ and $q_t$ its multiplicity in $(\gamma_{2k_0 +1}, \ldots, \gamma_{2k})$.  We write 
$$
P =  \prod_{t= 1} ^{k_0}  \underline M_{\gamma_{2t-1} \gamma_{2t} } \prod_{t= k_0+1} ^{k}  M_{\gamma_{2t-1} \gamma_{2t} }  = \prod_{t=1}^a  \underline M_{e_tf_t} ^{p_t}  M_{e_tf_t} ^{q_t} . 
$$

Let $T$ be the set of $y_t = \{e_t , f_t \}$ such that $y_t$ is consistent, $p_t = 1$ and $q_t = 0$.  By assumption $|T| = a_1$. Note that if $t \in T$, $e_t \ne f_t$ and for all $s \ne t$, $\{ e_t,f_t \} \cap \{ e_s, f_s\} = \emptyset$. Let $T^*\subset T$ be the random subset of $t \in T$ such that $\sigma(e_t) \in \vec  E^* \cup \{ f_t \}$ and $\sigma(f_t) \in \vec E^* \cup \{ e_t \}$. In words, elements in $T^*$ are either  matched by $\sigma$ (that is, $\sigma(e_t) = f_t$) or their are matched outside $\gamma$ (that is, $\{ \sigma(e_t) , \sigma(f_t)\} \subset  \vec E^*$).  Similarly, let $S \subset T$ be the random subset of $t \in T$ such that $\{ \sigma(e_t) , \sigma(f_t) \} \cap  \{  e_s, f_s \} \ne \emptyset $ for some $s \in T \backslash \{t \}$. In words, elements in $S$ are matched by $\sigma$ to at least another element in $T$.

By construction, if $t \in S$, 
$$
 \underline M_{e_tf_t} ^{p_t}  M_{e_tf_t} ^{q_t} =  \underline M_{e_tf_t}   =  -\frac 1 m.
$$
We thus have 
$$
P = (-m)^{-|S|} P^* Q,
$$
where 
$$
P^*  = \prod_{t \in T^*}  \underline M_{e_tf_t}  \quad \hbox{ and } \quad Q =  \prod_{t\notin  S \cup T^* }   \underline M_{e_tf_t} ^{p_t}  M_{e_tf_t} ^{q_t} . 
$$

Now, we define $\cF$ to be the $\sigma$-algebra generated by the variables $T^*$ and $\sigma(e_t), \sigma(f_t) , t \notin  T^*$. We denote by $\dE_{\cF}$ the associated conditional expectation. By construction, the variables $S,T^*$ and $Q$ are  $\cF$-measurable. We get
\begin{equation}\label{eq:defPQP}
\ABS{ \dE \SBRA{ P}} = \ABS{ \dE \SBRA{(-m)^{-|S|} \, Q  \, \dE_{\cF} \SBRA{P^*} }} \leq   \dE \SBRA{ m^{-|S|} \, |Q| \,  \ABS{ \dE_{\cF} \SBRA{P^*} }} , 
\end{equation}
where the last step follows from Jensen's inequality.

We start by evaluating $ \dE_{\cF^*} \SBRA{P^*}$ in \eqref{eq:defPQP}. If $\hat N$ is the number of $t \in T^*$ such that $\sigma(e_t) \ne f_t$, we have 
\begin{eqnarray*}
 P^*   & =&  \PAR{ 1 - \frac 1 m}^{|T^* |  - \hat N} \PAR{ - \frac 1 m}^{\hat N}. 
\end{eqnarray*}
 We now determine the law of $\hat N$ given $\cF$. Let $\hat m = | \vec E^* | -   \sum_{t \notin T^*}( \IND_{ \si(e_t) \in \vec E^*} + \IND_{ \si(f_t) \in \vec E^* })$ be the cardinality of half-edges in $\vec E^*$ which have not yet been matched when  the values of $\sigma(e_t), \sigma(f_t) , t \notin  T^* $ have been revealed. We set, for  integers $ t \geq 0$, $k \geq 2$ even, 
$$
\lp k \rp_t = \prod_{s=0}^{t-1} (k - 2s) \quad \hbox{ and } \quad k!! = \prod_{s=0}^{k/2-1} (k -1 - 2s).
$$
Note that $k!!$ is the number of matchings of a set of size $k$. If $t \in T^*$ and $\sigma(e_t) \ne f_t$ then $\sigma(e_f), \sigma(f_t) \in \vec E^*$. Thus, given $\cF$, for $0 \leq x \leq |T^*|$, the number of matchings such that $\hat N = x$ is equal to ${|T^*| \choose x} (\hat m)_{2x} (\hat m-2x) !!  = {|T^*| \choose x} \lp \hat  m \rp_{  x} \hat m!!$. We deduce that
$$
\dP_{\cF} ( \hat N = x) = \frac{{|T^*| \choose x} \lp \hat  m \rp_{  x} }{Z}\quad \hbox{ with } \quad Z = \sum_{x = 0}^{|T^*|} {|T^*| \choose x} \lp \hat  m \rp_{x} . 
$$
First, since $\hat m \geq m - 4a$ and $a \leq  \sqrt{m}$, we obtain from \eqref{eq:factpow}, for some $c >0$, 
$$
Z \geq c \sum_{x = 0}^{|T^*|} {|T^*| \choose x}  m^{x}   = c ( 1+ m)^{|T^*|} \geq  c m^{|T^*|}. 
$$
From what precedes, we get, 
\begin{eqnarray*}
\dE_\cF \SBRA{P^*}  &=& \frac 1 Z \sum_{x = 0} ^{|T^*|} {|T^*| \choose x}  \lp \hat  m \rp_{x} \PAR{ 1 - \frac 1 m}^{|T^* |  - x} \PAR{ - \frac 1 m}^{x}\\
 & = &  \frac 1 Z \sum_{x = 0} ^{|T^*|} {|T^*| \choose x}  \prod_{y=0}^{x-1} ( 2y -  \hat m) \PAR{ 1 - \frac 1 m}^{|T^* |  - x} \PAR{ \frac 1 m}^{x} \\
& = &   \frac 1 Z \dE  \prod_{y=0}^{N-1} (2 y -  \hat m ),
\end{eqnarray*}
where $N$ has distribution $\Bin(|T^* | , 1 / m)$. By Lemma \ref{le:bin}, applied to $z = 2$,  $k =|T^* | $, $p = 1 / m$ and $q = 1 / \hat m $, we deduce that, for some $c >0$, 
\begin{equation}\label{eq:ddett}
\dE_\cF \SBRA{P^*} \leq c \PAR{ \frac{\veps}{m} }^{|T^*|} ,
\end{equation}
with $\veps =   3 a  / \sqrt m$ (since $3 |T^*| \sqrt{ z/2} \leq 3a$).

We now evaluate $|Q|$ in \eqref{eq:defPQP}. Let $\cF_t$ be the $\sigma$-algebra generated by $ \sigma(e_s), \sigma(f_s) , s \ne t$. For any  $ t \in [a]$, 
\begin{equation}\label{eq:MeftP}
\dP_{\cF_t} ( M_{e_t f_t} = 1 )  = \frac{ \IND( e_t \ne f_t)\IND( \Omega^c _t) } { m_t } \leq \frac{ 1} { m^*}, 
\end{equation}
where $m^* = m - 2 a +1$, $m_t = m - | \{ \sigma(e_s) , \sigma(f_s) : s \ne t \}| -1$ and $\Omega_t \in \cF_t$ is the event that for some $s \ne t$, $\{ \sigma(e_s), \sigma(f_s)\} \cap \{e_t,f_t\} \ne \emptyset$. 
We get, for $p \geq 2$ and $q\geq 0$,
\begin{equation} \label{eq:rdj20}
\dE_{\cF_t}  | \underline M_{e_t f_t} ^p M_{e_t f_t}^q |  \leq \dE_{\cF_t}   | \underline M_{e_t f_t} |^2   \leq\PAR{ 1 - \frac 1 m }^2  \frac{ 1} {m^*}  + \frac{1}{m^2}   \PAR{ 1 -  \frac{ 1} { m^*} } \leq \frac{1}{m^*}. 
\end{equation}

Similarly, if $q \geq 1$, 
\begin{equation} \label{eq:rdj01}
\dE_{\cF_t}  | \underline M_{e_t f_t}^p M_{e_t f_t}^q |  \leq \dE_{\cF_t}   M_{e_t f_t}    \leq \frac{1}{m^*}. 
\end{equation}

We also have the weak bound,
\begin{equation} \label{eq:rdj10}
\dE_{\cF_t} |   \underline M_{e_t f_t} |    \leq \PAR{ 1 - \frac 1 m }  \frac{ 1} {m^*}  + \frac{1}{m}   \PAR{ 1 -  \frac{ 1} { m^*} } \leq \frac{2}{m^*}. 
\end{equation}
This last bound can be improved for $t   \in T \backslash ( S \cup T^*)$ as follows (recall that $p_t = 1$, $q_t = 0$ for all $t \in T$). First, observe that the variables $S$ and $T^*$ are $\mathcal \cF_t$-measurable for any $t$. On the event $t \in T \backslash \{ S, T^*\}$, by construction, the event $\Omega_t$ holds and thus $M_{e_t f_t}  = 0$. It follows that if $t   \in T \backslash ( S \cup T^*)$, 
\begin{equation} \label{eq:rdj10*}
\dE_{\cF_t} |   \underline M^{p_t}_{e_t f_t} M_{e_t f_t}^{q_t}|    =\dE_{\cF_t} |   \underline M_{e_t f_t} | = \frac 1 m   \leq \frac{1}{m^*}. 
\end{equation}

We may estimate $\dE \SBRA{  | Q |  \bigm| (S,T^*) }$ as follows. If $y_t$ is such that $p_t \geq 2$, we use \eqref{eq:rdj20},  if  $q_t \geq 1$, we use \eqref{eq:rdj01}. If $y_t$ is an inconsistent edge such that $p_t = 1$ and $q_t = 0$, we use \eqref{eq:rdj10}. Finally, if $t \in T \backslash ( S \cup T^*)$,  we use \eqref{eq:rdj10*}. From Lemma \ref{le:condexp}, we find that
$$
\dE \SBRA{  | Q |  \bigm| (S,T^*) } =  \dE \SBRA{ \prod_{t\notin  S \cup T^* }   | \underline M_{e_tf_t} ^{p_t}  M_{e_tf_t} ^{q_t}| \bigm| (S,T^*) } \leq 2^b \PAR{\frac{1}{m^*} }^{a - |S| - |T^*|}.
$$

Putting this last bound together with \eqref{eq:ddett}, we deduce from \eqref{eq:defPQP} and \eqref{eq:factpow}  that for some $c >0$, 
\begin{eqnarray*}
\ABS{ \dE P   } &\leq &  c \, 2^b \, \dE \PAR{  \frac {1} m }^{a - |T^*| }   \PAR{ \frac{\veps  }{ m} }^{|T^*|}   \\
& = & c  \, 2^b \, \PAR{\frac{1}{m}}^{a} \veps^{a_1} \dE \veps^{-|T \backslash T^*|}.
\end{eqnarray*}

To conclude the proof, it thus remains to show that  $\dE \veps^{-|T \backslash T^*|} \leq c$ for some constant $c>0$. The event that $\{|T \backslash T^*| \geq x \}$ is contained in the event that there are $\lceil x / 2 \rceil$ pairs $\{s,t\}$, $s\ne t$, such  that $\{\sigma(e_s),\sigma(f_s) \cap \{ e_t , f_t \}\} \ne \emptyset$ (the latter can be further decomposed in the union of the four events, $\sigma(e_s) = e_t$, $\sigma(e_s) = f_t$, $\sigma(f_s) = e_t$ or $\sigma(f_s) = f_t$).  From the union bound, we get
$$
\dP \PAR{|T \backslash T^*| \geq x } \leq \PAR{ \frac{4 a^2}{m^*} }^{\lceil x / 2 \rceil}.
$$
Indeed, the factor $( 4 a^2 )^{\lceil x / 2 \rceil}$ accounts for the choices of the possible half-edges to be matched. The factor $(1/m^*)^{  \lceil x / 2 \rceil}$ is an upper bound on the probability that these half-edges are matched by $\sigma$ (from  Lemma \ref{le:condexp} and \eqref{eq:MeftP}). Since $2a \leq 2k \leq 2\sqrt{m}$ and $\lceil x / 2 \rceil \leq x/2 +1/2$, we get from \eqref{eq:factpow},
$$
\dP \PAR{|T \backslash T^*| \geq x } \leq c \PAR{ \frac{2 a}{\sqrt{m} } }^{x}.
$$
Recalling $\veps = 3a / \sqrt m$, we find 
$$
 \dE  \veps^{-|T \backslash T^*|} \leq \sum_{x=0}^\infty \veps^{-x} \dP ( |T \backslash T^*| \geq x) \leq c \sum_{x = 0}^{\infty}   \PAR{\frac 2 3}^{-x} = 3c.
$$
This concludes the proof of Proposition \ref{le:exppath}.
\end{proof}

\subsection{Path counting}

\label{subsec:PC}
In this subsection, we give upper bounds on the operator norms of $\uB^{(\ell)}$ and $R_\ell^{k}$ defined by \eqref{eq:defD} and \eqref{eq:defR}. We will use the high trace method and it will lead us to enumerate  some paths.

\subsubsection{Operator norm of $\uB^{(\ell)}$}

Here, we prove the following proposition. 

\begin{proposition} \label{prop:normDelta}
Let $d \geq 3$ and $1 \leq \ell \leq  \log n$ be integers.  Let $\sigma$ be uniformly distributed on $M(\vec E)$ and $\uB^{(\ell)} = \uB^{(\ell)}(\sigma)$ be defined as in \eqref{eq:defD}. Then, \whp
$$ \| \uB^{(\ell)} \| \leq ( \log n) ^{15} \PAR{ d -1} ^{\ell /2}.$$
\end{proposition}
Let $m$ be a positive integer. With the convention that $e_{2m + 1} = e_1$, we get 
\begin{eqnarray}
\|\uB^{(\ell)}  \| ^{2 m} = \|\uB^{(\ell)}{\uB^{(\ell)}}^*  \| ^{m} & \leq & \tr \BRA{ \PAR{  \uB^{(\ell)}{\uB^{(\ell)}}^*}^{m}  } \nonumber\\
& = & \sum_{e_1, \ldots, e_{2m}}\prod_{i=1}^{m}  (\uB^{(\ell)}) _{e_{2i-1} , e_{2 i}}(\uB^{(\ell)}) _{e_{2i+1} , e_{2 i}} \nonumber \\
& =  &  \sum_{\gamma }   \prod_{i=1}^{2m}  \prod_{t=1}^{\ell} \underline M_{\gamma_{i,2t-1}  \gamma_{i,2t}} ,  \label{eq:trDeltak}
\end{eqnarray}
where the sum is over all  $\gamma = ( \gamma_1, \ldots, \gamma_{2m})$ such that $\gamma_i = (\gamma_{i,1}, \ldots, \gamma_{i,2\ell+1}) \in F^{\ell}$ (that is, non-backtracking tangled-free path) and for all $i  \in [m]$, 
$$
\gamma_{2i,1} = \gamma_{2i+1, 1} \quad \hbox{ and } \quad  \gamma_{2i-1,2\ell+1} = \gamma_{2i, 2\ell+1},
$$
with the convention that $\gamma_{2m+1} = \gamma_{1}$. We set $\gamma_{i,t} = (v_{i,t}, j_{i,t})$. Note that the product \eqref{eq:trDeltak} does not depend on the value of $\gamma_{2i-1,2\ell+1} = \gamma_{2i, 2\ell+1}$, $i\in [ m]$. Moreover, if $\gamma_{2i-1,2\ell}$ and $\gamma_{2i,2\ell}$ are given, then $\gamma_{2i-1,2\ell+1} = \gamma_{2i, 2\ell+1}$ can either take $(d-1)$ possible values (if $j_{2i-1,2\ell} = j_{2i,2\ell})$ or $(d-2)$ possible values (if $j_{2i-1,2\ell} \ne j_{2i,2\ell})$. On the right-hand side of \eqref{eq:trDeltak}, for all $i \in [m]$, we sum over $\gamma_{2i-1,2\ell+1} = \gamma_{2i, 2\ell+1}$, and perform the change of variable, for all $(i,t) \in [m] \times  [2\ell]$, $\gamma'_{2i,t} = \gamma_{2i, 2 \ell +1 -t }$. Since, $\underline M$ is symmetric, we may rewrite the right-hand side of \eqref{eq:trDeltak} as follows: 
\begin{equation}\label{eq:trDeltak2}
\| \uB^{(\ell)}  \| ^{2 m}  \leq   \sum_{\gamma \in W_{\ell,m}}  q(\gamma)  \prod_{i=1}^{2m}  \prod_{t=1}^{\ell} \underline M_{\gamma_{i,2t-1}  \gamma_{i,2t}} , 
\end{equation}
where $W_{\ell,m}$ is the set of $\gamma = ( \gamma_1, \ldots, \gamma_{2m}) \in \vec E^{2\ell \times 2m}$ such that $\gamma_i = (\gamma_{i,1}, \ldots, \gamma_{i,2\ell})$ is a non-backtracking tangle-free path and for all $i\in[m]$, 
\begin{equation}\label{eq:defbound}
v_{2i,1} = v_{2i-1, 2\ell}  \quad \hbox{ and } \quad  \gamma_{2i+1,1} = \gamma_{2i, 2\ell},
\end{equation}
with the convention that $\gamma_{2m+1} = \gamma_{1}$ and  $\gamma_{i,t} = (v_{i,t}, j_{i,t})$, see Figure \ref{fig:fleur}. Finally, in \eqref{eq:trDeltak2}, we have set
\begin{equation}\label{eq:defqg}
q(\gamma) = \prod_{i=1} ^{m} \PAR{ d -1 - \IND_{j_{2i-1,2\ell} \ne j_{2i,1}} }\leq (d-1)^m.
\end{equation}

\begin{figure}[htb]
\begin{center}  
\resizebox{9cm}{!}{
\begin{tikzpicture}[main node/.style={circle,fill , text = black, thick}]
\node  at (0,0) (1) {} ;
\node at (2,0) (2) {} ;
\node[right] at (3,1.73205) (3) {{\tiny $v_{1,2\ell} = v_{2,1}$}}   ;
\node at (2,3.4641016) (4) {} ;
\node at (0,3.4641016) (5) {} ;
\node[left] at (-1,1.73205) (6) {{\tiny $v_{2i-1,2\ell} = v_{2i,1}$}}  ;

\node at (1,0.8660254) (a) {} ;
\node[below] at (1.8660254,1.3660254) (b) {{\tiny \hspace{10pt}$\gamma_{1,1} = \gamma_{12,2k}$}} ;
\node  at (1.8660254,2.23205) (c) {} ;
\node at (1,2.5980762) (d) {} ;
\node  at (0.1339746,2.23205) (e) {} ;
\node[below ]  at (0.1339746,1.3660254) (f) {{\tiny\hspace{-5pt} $\gamma_{2i,2k} = \gamma_{2i+1,1}$}}  ;

\draw[cyan , ->,thick] (1.8660254,1.3660254)  [out = 0 , in = 180] to (3) {} ;  \node at (2.3,1.46) {{\small $\gamma_1$}} ; 
\draw[cyan ,->,thick] (3)  [out = 180 , in = 0] to (1.8660254,2.23205) {} ;  \node at (2.3,2) {{\small $\gamma_2$}}  ; 
\draw[cyan ,->,thick] (1.8660254,2.23205)  [out = 60 , in = -120] to (4)   ; 
\draw[cyan ,->,thick] (4)  [out = -120 , in = 60] to (1,2.5980762)  ; 
\draw[cyan ,->,thick] (1,2.5980762)  [out = 120 , in = -60] to (5)   ; 
\draw[cyan ,->,thick] (5)  [out = -60 , in = 120] to (0.1339746,2.23205)   ; 
\draw[cyan ,->,thick] (0.1339746,2.23205) [out = 180 , in = 0] to (6)   ;  \node at (-0.339746,2) {{\small $\gamma_{2i-1}$}} ; 
\draw[cyan ,->,thick] (6)  [out = 0 , in = 180] to (0.1339746,1.3660254)  ; \node at (-0.339746,1.46) {{\small $\gamma_{2i}$}} ; 
\draw[cyan ,->,thick] (0.1339746,1.3660254)  [out = -120 , in = 60] to (1)   ; \node at (0,0.8) {{\small $\gamma_{2i+1}$}} ; 
\draw[cyan ,->,thick] (1)  [out = 60 , in = -120] to (1,0.8660254)   ; 
\draw[cyan ,->,thick] (1,0.8660254)  [out = -60 , in = 120] to (2) {} ; \node at (2.05,0.8)  {{\small $\gamma_{12}$}}  ; 
\draw[cyan ,->,thick] (2)  [out = 120 , in = -60] to (1.8660254,1.3660254)    ;

\end{tikzpicture}
}
\caption{A path $\gamma = (\gamma_1, \ldots , \gamma_{12})$ in $W_{\ell,6}$, each $\gamma_i$ is non-backtracking and tangle-free.} \label{fig:fleur}

\end{center}\end{figure}

The proof of Proposition \ref{prop:normDelta} relies on an upper bound on the expectation of the right-hand side of \eqref{eq:trDeltak2}. First, for each $\gamma \in \vec E^{2\ell\times 2m}$, we define $G_\gamma$ as in Definition \ref{def1}: $V_\gamma = \cup_i V_{\gamma_i} = \{ v_{i,t} : (i,t) \in [2m] \times [2\ell] \} \subset [n]$ and $\cE_\gamma = \cup_i \cE_{\gamma_i} = \{ \{ \gamma_{i,2t-1}  , \gamma_{i,2t}  \} :   (i,t) \in [2m] \times [\ell] \}$ are the sets of visited vertices and visited pairs of half-edges along the path.  For $v \in V_\gamma$,  $\vec E_\gamma(v) = \{ \gamma_{i,t} : v_{i,t} = v \hbox{ for some $(i,t) \in [2m] \times [2\ell]$} \}\subset \vec E(v)$ is the set of visited half-edges pending at $v$.

In order to organize the terms on the right-hand side of \eqref{eq:trDeltak2}, we partition  $\vec E^{2\ell \times 2m}$ into isomorphism classes. For $\gamma,\gamma'\in \vec E^{2\ell \times 2m}$, we write $\gamma \sim \gamma'$ if there exist a permutation $\alpha \in S_n$ and permutations $(\beta_1, \ldots, \beta_{n} ) \in S_{d}^n$ such that, with $\gamma'_{i,t}  = (v'_{i,t} , j'_{i,t})$, for all $(i,t) \in [2m] \times [2\ell]$, $v'_{i,t} = \alpha ( v_{i,t})$ and $j'_{i,t}  = \beta_{v_{i,t}} ( j_{i,t})$.  We may define a canonical element in each isomorphic class as follows. We say that a path $\gamma \in \vec E^{2\ell \times 2m}$ is canonical if $V_\gamma = \{ 1, \ldots, |V_\gamma| \}$, for all $v \in V_\gamma$, $\vec E_\gamma(v) = \{ (v,1), \ldots, (v,|\vec E_\gamma(v)|)\}$  and the vertices in $V_\gamma$ and the the half-edges in $\vec E_\gamma(v)$  are visited in the lexicographic order ($x$ before $x+1$ and $(x,j)$ before $(x,j+1)$).  Note that $\gamma \in W_{\ell,m}$ and $\gamma'\sim \gamma$ implies that $\gamma'\in W_{\ell,m}$. Our first lemma bounds the number of paths in each isomorphism class.
\begin{lemma}\label{le:isopath}
Let $\gamma \in W_{\ell,m}$ with $|V_\gamma| = s $ and $|\cE_\gamma| = a$. If $g = a - s +1$, then $\gamma$  is isomorphic to at most 
$
  n^s \PAR{ d (d-1) }^{ s} (d-1)^{2g -1} 
$ 
paths in $W_{\ell,m}$.  
\end{lemma}

\begin{proof}
For $v \in V_\gamma$, let $d_v = |\vec E_\gamma(v)|$ and, for $t \in [d]$ integer, recall the Pochhammer symbol, $(d)_t = d (d-1) \cdots (d-t+1)$.  Observe that, if $s_t = \sum_{v\in V_\gamma} \IND ( d_v = t)$,  we have 
$$
\sum_{t \geq 1} s_t = s \quad \hbox{ and } \quad \sum_{t \geq 1} t s_t = 2 a.
$$
By construction, $\gamma$ is isomorphic to 
$$
(n)_s \prod_{v =1}^s (d)_{d_v}\leq n^s \prod_{t \geq 1} {(d)_t }^{s_t}
$$
distinct elements in $W_{\ell,m}$. However,
$$
\prod_{t \geq 1} {(d)_t }^{s_t} \leq d^{\sum_{t} s_t } (d-1) ^{\sum_{t\geq1} (t-1) s_t}  = d^s (d-1)^{2 a - s} = \PAR{d (d-1) }^s \PAR{d-1 }^{2 g -1}. 
$$
The conclusion follows.
\end{proof}

Our second lemma gives an  upper bound on the number of isomorphic classes. This lemma is  a variant of \cite[Lemma 17]{BLM}. It relies crucially on the fact that an element $\gamma \in W_{\ell,m}$ is composed of $2m$ tangle-free paths.  

\begin{lemma}\label{le:enumpath}
Let $\cW_{\ell,m} (s,a) $ be the subset of canonical paths with $|V_\gamma| = s$ and $|\cE_\gamma |= a$. Let $g = a - s +1$. If $g <0$, $\cW_{\ell,m} (s,a)$ is empty. If $g \geq 0$, we have 
$$
| \cW _{\ell,m} (s,a) | \leq   (4 \ell  m )^{6 m g  + 6 m}.
$$
\end{lemma}

\begin{proof}
For any $\gamma \in W_{\ell,m}$, due to the boundary conditions \eqref{eq:defbound}, the graph $G_\gamma$ is connected. Hence $ |V_\gamma| - 1 \leq |\cE_\gamma |$. It implies the first claim of the lemma.   In order to upper bound $| \cW_{\ell,m} ( s, a) | $, we find an efficient way  to encode the canonical paths $\gamma \in \cW_{\ell,m} ( s, a) $ (that is, find an injective map from $\cW_{\ell,m}(s,a)$ to a larger set whose cardinality is easily upper bounded).

For $(i,t) \in [2m]\times [\ell]$, let $x_{i,t} = ( \gamma_{i,2t-1}, \gamma_{i,2t} )$ and $y_{i,t} =  \{ \gamma_{i,2t-1}, \gamma_{i,2t} \} \in \cE_\gamma$ be the corresponding visited edge. We explore the sequence $(x_{i,t})$ in lexicographic order denoted by $\preceq$ (that is $(i,t)\preceq (i+1,t')$ and $(i,t)\preceq(i,t+1)$). By convention, we set $(i,\ell+1) = (i+1, 1)$ (if $i = 2m$, $(2m,\ell+1) = (1,1)$). We think of the index $(i,t)$ as a time.  We say that $(i ,t)$ is a {\em first time}, if $v_{i,2t}$ has not been seen before (that is $v_{i,2t} \ne v_{i', t'}$ for all $(i',t') \preceq (i,2t)$). The edge $y_{i,t}$ will then be called a {\em tree edge}. As its name suggests, the graph,  spanned by the  edges $ \{ \{ v_{i,2t-1}, v_{i,2t} \} : (i,t) \hbox{ first time}\}$ is a tree, it is a spanning tree of $G_\gamma$. An edge  $y_{i,t}$ which is not a tree edge, is called an {\em excess edge}, and  we say that $(i,t)$ is an {\em important time} (see Figure \ref{fig:impo}). Since every vertex in $V_\gamma$ different from $1$ has its associated tree edge,
\begin{equation}\label{eq:defchi}
\ABS{ \BRA{ y \in \cE_\gamma : \hbox{ $y$ is an excess edge}} } = a - s +1 = g.
\end{equation}

 Since $\gamma_i$ is non-backtracking in the sense of Definition \ref{def1}, the path $\gamma_i$ can be decomposed by the successive repetition of (i) a sequence of first times (possibly empty), (ii) an important time and (iii) a path on the tree defined so far (possibly empty). Note also that, if $(i,t)$ is a first time then  $\gamma_{i,2t} = ( m +1 , 1)$ and  $\gamma_{i,2t+1} = ( m +1 , 2)$ where $m$ is the number of previous first times (including $(i,t)$). Indeed, since $\gamma$ is canonical, $\gamma_{1,1} = (1,1)$ and every time that a new vertex, say $v$, is visited, the half-edge $(v,1)$ will be seen first.

\begin{figure}[htb]
\begin{center}  
\resizebox{12cm}{!}{
\begin{tikzpicture}[main node/.style={circle, draw , fill = lightgray, text = black, thick}]
\node[main node]  at (0,0) (1) {1} ;
\node[main node] at (2,0) (2) {2} ;
\node[main node] at (4,0) (3) {3} ;
\node[main node] at (3,1.7320508 ) (4) {4} ;
\node[main node] at (6,0) (5) {5} ;

\draw[cyan, ->,ultra thick] (1) to (2) ; 
\draw[ cyan, ->,ultra thick] (2) to (3) ; 
\draw[ cyan, ->,ultra thick] (3) to (4) ;  
  \draw[cyan,  ->,ultra thick] (4) to (2) ;  
 \draw[cyan,  ->, ultra thick] (3) to (5) ;

  \node[text = black!80] at (1,0) {1} ; 
    \node[text = black!80] at (3,0) {2,5,8,11} ; 
    \node[text = black!80] at (5,0) {12} ;   
       \node[text = black!80] at (3.6,0.8660254) {3,6,9} ; 
    \node[text = black!80] at (2.4,0.8660254) {4,7,10} ;

  \node[main node]  at (10,0) (10) {1} ;
\node[main node] at (12,0) (20) {2} ;
\node[main node] at (14,0) (30) {3} ;
\node[main node] at (13,1.7320508 ) (40) {4} ;
\node[main node] at (16,0) (50) {5} ;

\draw[cyan, -,ultra thick] (10) to (20) ; 
\draw[cyan, -,ultra thick] (20) to (30) ; 
\draw[ cyan, -,ultra thick] (30) to (40) ;  
 \draw[cyan,  -, ultra  thick] (30) to (50) ;

\end{tikzpicture}
}
{\scriptsize$$ \gamma_1 =  (1,1)  (2,1)  (2,2)  ( 3,1)  (3,2)   (4,1)  (4,2)   (2,3) (2,2)  (3,1)  (3,2)  (4,1)  (4,2)  (2,3)  (2,2)  (3,1)  (3,2)  (4,1)  (4,2)   (2,3)  (2,2)  (3,1)  (3,2)  (5,1) $$} 
\vspace{-25pt}
\caption{A canonical path $\gamma_1$ (non-backtracking and tangle-free) and its associated spanning tree. The times $(1,t)$ with $t \in \{ 1,2,3,12\}$ are first times and  $ t= \{ 4,7, 10\} $ are important times, $(1,4)$ is the short cycling time, $(1,7), (1,10)$ are superfluous.  With the notation below, $t_1 = 4$, $\sigma = 2$, $\hat t = 12$, $\hat \tau = 13$.} \label{fig:impo}

\end{center}\end{figure}

We can thus build a first encoding of $\cW_{\ell,m}(s,a)$ as follows. If $(i,t)$ is an  important time, we mark the time $(i,t)$  by the vector $(\gamma_{i,2t},\gamma_{i,2\tau-1})$, where $(i,\tau)$ is the next time that $y_{i,\tau}$ will not be a tree edge of the tree constructed so far (by convention, if the path $\gamma_i$ remains on the tree, we set $\tau = \ell+1$). From \eqref{eq:defbound}, for $t=1$, we also add the {\em starting mark} $\gamma_{i,2\tau-1}$ where $(i,\tau)$ is as above the next time that $y_{i,\tau}$ will not be a tree edge of the tree constructed so far.  Since there is a unique non-backtracking path between two vertices of a tree, we can reconstruct $\gamma \in \cW_{\ell,m}$ from the starting marks and the position of the important times and their marks. This defines our first encoding.

The main issue with this encoding is that the number of important times could be large  (see Figure \ref{fig:impo}). This is where the hypothesis that each path $\gamma_i$ is tangle-free comes into play. We  partition important times into three categories, {\em short cycling}, {\em long cycling} and {\em superfluous} times. For each $i$, we consider the smallest time $(i,t_1)$ such that $v_{i,2t_1} \in \{ v_{i,1}, \ldots, v_{i,2t_1-1} \}$. If such time $t_1$ exists, the last important time $(i,t) \preceq (i,t_1)$ will be called the short cycling time.  Let $1 \leq \sigma \leq t_1$ be such that $v_{i,2t_1} = v_{i,2\sigma -1}$. By the tangle-free assumption, $C_i = (\gamma_{i,2\sigma-1},\cdots, \gamma_{i,2t_1})$ will be the unique cycle visited by $\gamma_i$.  We denote by $(i,\hat t)$, $\hat t \geq t_1$, the smallest time that $\gamma_{i,2\hat t-1}$ in not in $C_i$ (by convention $\hat t = \ell+1$ if $\gamma_i$ remains in $C_i$). We modify the mark of the short cycling time as  $(\gamma_{i,2t}, v_{i,2t_1}, \hat t ,\gamma_{i,2\hat \tau-1})$, where $(i,\hat \tau)$, $\hat \tau \geq \hat t$, is the next time that $y_{i,\hat \tau}$ will not be a tree edge of the tree constructed so far. Important times $(i,t')$ with $1 \leq t' < t$ or $\tau \leq t' \leq k$ are called long cycling times. The other important times are called superfluous. The key observation  is that for each $i \in [2 m]$, the number of long cycling times on $\gamma_i$ is bounded by  $g-1$ (since there is at most one cycle, no edge of $\cE_\gamma$ can be seen twice outside those of $C_i$, the $-1$ coming from the fact the short cycling time is an excess edge). 

We now have our second encoding. We can reconstruct $\gamma$ from the starting marks, the positions of the long cycling  and the short cycling times and their marks. For each $i$, there are at most $1$ short cycling time and $g-1$ long cycling times. There are at most $  \ell ^{2m g}$ ways to position them.  The number of distinct half-edges $\gamma_{i,t}$ in $\gamma$ is at most $h = 4 \ell m$. There are at most $h ^2$ different possible marks for a long cycling time and $ s h ^2 \ell$ marks for a short cycling time. Finally, there are $h$ possibilities for a starting mark. We deduce that    
$$
| \cW _{\ell,m} (s,a) | \leq    \ell ^{2 m g} ( h^2 ) ^{2m (g-1)} (s h^2 \ell )^{2m}   (h) ^{2m}. 
$$
Using $s \leq 2\ell m$, the last expression is generously bounded by the statement of the lemma. \end{proof}

For $\gamma \in W_{\ell,m}$, the average contribution of $\gamma$ in \eqref{eq:trDeltak2} is
\begin{equation}\label{eq:defmug}
\mu(\gamma) =   \dE \prod_{i=1}^{2m}  \prod_{t=1}^{\ell} \underline M_{\gamma_{i,2t-1}  \gamma_{i,2t}}.
\end{equation}
Observe that if $\gamma \sim \gamma'$ then $\mu(\gamma) = \mu(\gamma')$. Our final lemma uses  Proposition \ref{le:exppath} to estimate this average contribution. 

\begin{lemma}\label{le:meanpath}
There is a constant $c > 0$ such that, if $2 \ell m \leq \sqrt{ d n}$ and $\gamma \in W_{\ell,m} $ with $|V_\gamma| = s$, $|\cE_\gamma |= a$ and $g = a - s+1$, we have
$$
\ABS{ \mu(\gamma)} \leq  c^{g + m}  \PAR{\frac{ 1 }{dn} }^a   \PAR{\frac{ (6\ell m)^2 }{dn} } ^{  (a - 2g - (\ell+2) m)_+}.  
$$
\end{lemma}
\begin{proof}
Let $\cE'_1$ be the set of $y \in \cE_\gamma$ which are visited exactly once in $\gamma$, that is which are of multiplicity one in the sense of Definition \ref{def:defcons}.
We set $a'_1 =| \cE'_1 |$. Similarly, let $a_2$ the number of $y \in \cE_\gamma$ are visited at least twice.  We have 
$$
a'_1 + a_2 = a \quad  \hbox{ and } \quad a'_1 + 2 a_2 \leq 2\ell m. 
$$ 
Therefore, $a'_1 \geq 2( a - \ell m)$. Let  $\cE_1 $ be the subset of $ y \in \cE'_1 $ which are consistent and let $\cE_i $ the set of inconsistent edges (in the sense of Definition \ref{def:defcons}).  Using the terminology of the proof of Lemma \ref{le:enumpath}, a new inconsistent edge can appear at the the start of a non-empty sequence of first times or at a first visit of an excess edge. Every such step can create at most $2$ new inconsistent edges. From  \eqref{eq:defchi}, there are $g$ excess edges. Moreover, every non-empty sequence of
first times started in $\gamma_i$, $i \in [2m]$, either is followed by a first visit of an excess edge or ends $\gamma_i$. Hence, if $a_1 = |\cE_1|$ and $b = |\cE_i |$, we have $b \leq 4 g + 4m $ and
$ a_1 \geq a'_1 - 4 g - 4m$. So finally, $a_1 \geq 2 ( a  - 2 g   - (\ell+2)m)$. It remains to apply  Proposition \ref{le:exppath}. 
\end{proof}

\begin{proof}[Proof of Proposition \ref{prop:normDelta}]
For $n \geq 3$, we define 
\begin{equation}\label{eq:choicem}
m = \left\lfloor  \frac{ \log n }{13 \log (\log n)} \right\rfloor.
\end{equation}
The integer $m$ is positive for all $n$ large enough.  We claim that it is sufficient to prove that 
\begin{equation}\label{eq:boundS}
S = \sum_{\gamma \in W_{k,m} }   | \mu(\gamma) | \leq n  (c \ell  m )^{6 m} (d-1)^{\ell' m},
\end{equation}
where $\ell' = \ell+2$ and $\mu(\gamma)$ was defined in \eqref{eq:defmug}. Indeed, from \eqref{eq:trDeltak2}, we get for some new constant $c >0$, 
$$
\dE \|\uB^{(\ell)}  \| ^{2 m} \leq n  (c \ell  m )^{6 m} (d-1)^{\ell m}.
$$
Thus, from Markov inequality, for any $x \geq 1$,
\begin{equation}\label{eq:Markov}
\dP \PAR{ \|\uB^{(\ell)}  \| \geq n^{1/(2m)} (c \ell m) ^3 (d-1)^{\ell/2} x} \leq x^ {-2m}.
\end{equation}
For our choice of $m$, $n ^{  1 / (2m) } = o ( \log n )^{7}$ and $\ell m = o ( \log n) ^2$. Proposition \ref{prop:normDelta} follows.

We now prove \eqref{eq:boundS}. Using Lemma \ref{le:isopath}, Lemma \ref{le:enumpath} and Lemma \ref{le:meanpath}, we obtain,
\begin{eqnarray*}
S & \leq & \sum_{s = 1}^\infty \sum_{a = s - 1} ^{\infty} \max_{ \gamma \in \cW_{\ell,m} (s,a)} |\{ \gamma' \in W_{\ell,m} : \gamma' \sim \gamma \} | \times  |\cW_{\ell,m} (s,a) | \times  \max_{ \gamma \in \cW_{\ell,m} (s,a)} \mu(\gamma)  \\
& \leq &  \sum_{s=1}^{\infty} \sum_{a = s - 1} ^{\infty} n^s ( d (d-1) ) ^{s } (d-1)^{2g -1}  (4 \ell  m )^{6 m g + 6 m}  c^{g + m} \PAR{\frac{ 1 }{dn} }^a  \PAR{\frac{ (6\ell m)^2 }{dn} } ^{  (a - 2g - \ell ' m)_+} 
 \end{eqnarray*}
where $g = g(s,a) = a - s+ 1$. We perform the change of variable $a = s + g -1$, we get for some constant $c' >0$ and all $n$ large enough, 
\begin{eqnarray*}
S & \leq &   \sum_{s=1}^{\infty} \sum_{g=0} ^{\infty} \PAR{\frac{dn}{d-1}} \PAR{ c (4 \ell m)^6}^m  \PAR{d-1}^{s }\PAR{ \frac{ c (d-1)^{2}  (4 \ell  m )^{6m} }{dn}}^g    \PAR{\frac{ (6\ell m)^2 }{dn} } ^{  (s - g - 1 -  \ell ' m)_+}\\
& \leq & \sum_{s=1}^{\infty} \sum_{g=0} ^{\infty} n  (c'  \ell  m)^{6 m}    (d-1)^{s}  \PAR{\frac{ (c' \ell m) ^{6m} }{ n } }^{g} \PAR{\frac{ (6\ell m)^2 }{dn} } ^{  (s - g - 1 -  \ell ' m)_+},\\
& = &  S_1 + S_2 + S_3, 
 \end{eqnarray*}
where $S_1$ is the sum over $\{ 1 \leq s \leq \ell'm, g \geq 0 \}$, $S_2$ over $\{\ell' m+1 \leq s , 0 \leq g \leq s - 1 - \ell' m \}$, and $S_3$ over $\{ \ell' m+1 \leq s , g \geq s - \ell' m\}$. We have 
\begin{eqnarray*}
S_1 & = &  n  (c'  \ell  m)^{6 m}   \sum_{s=1}^{ \ell' m} (d-1)^{s} \sum_{g=0} ^{ \infty } \PAR{\frac{ (c' \ell m) ^{6m} }{ n } }^{g} \\
& \leq & 2 n (c'  \ell  m )^{6 m}  (d-1)^{\ell'm}   \sum_{g = 0} ^{ \infty } \PAR{\frac{  (c' \ell m )^{6m} }{ n } }^{g}.
\end{eqnarray*}
For our choice of $m$ in \eqref{eq:choicem}, for $n$ large enough, 
$$
\frac{   (c' \ell m) ^{6m} }{ n } \leq \frac{( \log n )^{12m} }{n} \leq n^{-1/13}. 
$$
In particular, the above geometric series converges . Hence, adjusting the value of $c$,  the right-hand side of \eqref{eq:boundS} is an upper bound for $S_1$. Similarly, with $\veps =   (6 \ell m)^2  / dn    = o(1)$, for some constant $c ' >0$, for $n$ large enough, 
\begin{eqnarray*}
S_2 & =  &   n   (c'  \ell  m)^{6 m}  \sum_{s= \ell' m+1}^{\infty} (d-1)^{s} \veps^{s-1 -\ell' m} \sum_{g = 0} ^{s - 1 - \ell' m } \PAR{\frac{(c' \ell m) ^{6m}  }{ \veps n } }^{g} \\
& \leq & 2 n (c' \ell'  m )^{6 m}     \sum_{s = \ell' m+1}^{ \infty} (d-1)^{s}   \PAR{\frac{(c' \ell m) ^{6m} }{ n } }^{ s- 1 - \ell'm} \\
& = & 2 n (c' \ell'  m )^{6 m}  (d-1)^{\ell 'm+1}\sum_{p = 0}^{ \infty} (d-1)^{p}   \PAR{\frac{ (c' \ell m) ^{6m} }{ n } }^{ p}.
\end{eqnarray*}
Again, the geometric series are convergent and the right-hand side of \eqref{eq:boundS} is an upper bound for $S_2$. Finally, 
\begin{eqnarray*}
S_3 & =  &  n   (c'  \ell  m)^{6 m}  \sum_{s= \ell'm+1}^{\infty} (d-1)^s  \sum_{g = s - \ell'm }^{\infty}  \PAR{\frac{ (c' \ell m) ^{6m} }{ n } }^{g} \\
& \leq & 2 n   (c'  \ell  m)^{6 m}    \sum_{s = \ell'm+1}^{ \infty} (d-1)^{s}   \PAR{\frac{  (c' \ell m) ^{6m}  }{ n } }^{ s - \ell'm} \\
& = & 2 n   (c'  \ell  m)^{6 m} (d-1)^{\ell'm}\sum_{p = 1}^{ \infty} (d-1)^{p}   \PAR{\frac{ (c' \ell m) ^{6m} }{ n } }^{ p}.
\end{eqnarray*}
The right-hand side of \eqref{eq:boundS} is an upper bound for $S_3$. This concludes the proof. 
\end{proof}

\begin{remark}\label{rk:QB} Markov inequality \eqref{eq:Markov} applied to $x = (\log n)^b$ actually implies that for any $a >0$, there exists $c >0$ such that the event $ \| \uB^{(\ell)} \| \leq  ( \log n) ^{c} \PAR{ d -1} ^{\ell /2}$ has probability at least $1 - n^{-a}$ for all $n$ large enough.
\end{remark}
\subsubsection{Operator norm of $R^{(\ell)}_{k}$}

\label{subsec:Rlk}
We now adapt the above subsection to the matrices $R^{(\ell)}_{k}$. 
\begin{proposition} \label{prop:normR}
Let $d \geq 3$, and $1 \leq \ell \leq  \log n$ be integers.  Let $\sigma$ be uniformly distributed on $M(\vec E)$ and for $k \in [\ell]$, let $R_k^{(\ell)} = R_k^{(\ell)}(\sigma)$ be defined as in \eqref{eq:defR}. Then, \whp
$$ \sum_{k=1}^\ell \| R^{(\ell)}_{k} \| \leq ( \log n) ^{30}\PAR{ d - 1}^{\ell}.$$
\end{proposition}
Let $m$ be a positive integer and $k \in [\ell]$.  Arguing as in  \eqref{eq:trDeltak}, we find
\begin{eqnarray}
\|R^{(\ell)}_{k}  \| ^{2 m} & \leq  &  \sum_{\gamma }  \prod_{i=1}^{2m}  \prod_{t=1}^{k-1} \underline M_{\gamma_{i,2t-1}  \gamma_{i,2t}} \prod_{t=k+1}^{\ell}  M_{\gamma_{i,2t-1}  \gamma_{i,2t}}  ,  \label{eq:trDeltak0}
\end{eqnarray}
where the sum is over all  $\gamma = ( \gamma_1, \ldots, \gamma_{2m})$ such that $\gamma_i = (\gamma_{i,1}, \ldots, \gamma_{i,2\ell+1}) \in F^{\ell}_k \backslash F^\ell$,  and for all $i \in [m]$, 
$$
\gamma_{2i,1} = \gamma_{2i+1, 1} \quad \hbox{ and } \quad  \gamma_{2i-1,2\ell+1} = \gamma_{2i, 2\ell+1},
$$
with the convention that $\gamma_{2m+1} = \gamma_{1}$. The product \eqref{eq:trDeltak0} does not depend on the value of $\gamma_{2i-1,2\ell+1} = \gamma_{2i,2\ell+1}$ for all $i\in[m]$. Moreover, given the values of $\gamma_{2i-1,2\ell}$, $\gamma_{2i, 2 \ell}$, the half-edge $\gamma_{2i-1,2\ell+1} = \gamma_{2i-1,2\ell+1}$ can take $(d-1)$ or $(d-2)$ possibles values.  On the right-hand side of \eqref{eq:trDeltak0}, for all $i\in[m]$, we sum over the values of $ \gamma_{2i-1,2\ell+1} =  \gamma_{2i-1,2\ell+1} $, and we perform the change of variable for all $(i,t) \in [m]\times[\ell]$, $\gamma'_{2i,t} = \gamma_{2i, 2\ell +1 -t}$, we find
\begin{eqnarray}
\| R^{(\ell)}_{k}  \| ^{2 m}  &\leq & \sum_{\gamma \in W^k_{\ell,m}} q (\gamma)  P_k (\gamma), \label{eq:trDeltak20}
\end{eqnarray}
where $q(\gamma)$ was defined in \eqref{eq:defqg} and $W^k_{\ell,m}$, $P_k(\gamma)$ are defined as follows. We set for $i \in [2m]$, 
\begin{equation}\label{eq:defki}
k_i = \left\{ \begin{array}{ll} k & \hbox{if $i$ odd} \\
 \ell  - k +1 & \hbox{if $i$ even}
\end{array}\right.
\end{equation}
 The set $W^k_{\ell,m}$ is the collection of  $\gamma = ( \gamma_1, \ldots, \gamma_{2m}) \in \vec E^{2\ell  \times 2m}$ such that for all $i \in [2m]$, $\gamma_i  = (\gamma_{i,1}, \ldots, \gamma_{i,2\ell})$ is non-backtracking and tangled but $$\gamma'_i = ( \gamma_{i,1}, \ldots , \gamma_{i,2k_i-2}) \quad  \hbox{ and } \quad \gamma''_i =  ( \gamma_{i,2k_i+1}, \ldots , \gamma_{i,2\ell})$$ are tangle-free.  We also have the boundary condition \eqref{eq:defbound} with  $\gamma_{i,t} = (v_{i,t}, j_{i,t})$. Finally, in \eqref{eq:trDeltak20}, for $\gamma \in W^k_{\ell,m}$, we have set 
$$P_k (\gamma) =  \prod_{i=1}^{2m}  \prod_{t=1}^{k_i-1}  M^{\veps_i}_{\gamma_{i,2t-1}  \gamma_{i,2t}}   \prod_{t=k_i+1}^{\ell} M^{\veps_i}_{\gamma_{i,2t-1}}, $$
where $M^{\veps_i} = \underline M$ if $i$ is odd and $M^{\veps_i} = M$ if $i$ is odd.

As in the previous subsection, for each $\gamma \in W^k_{\ell,m} \subset \vec E^{2\ell \times 2m}$, we associate the multigraph $G_\gamma$ introduced in Definition \ref{def1}.  We also partition  $W^k_{\ell,m}$ into isomorphism classes exactly as in the previous subsection. We define a canonical element in each isomorphic class thanks to the lexicographic order.

We note however that for all $\gamma \in W^k_{\ell,m}$, the scalar $P_k(\gamma)$ does not depend on the value of $(\gamma_{i,2k_i -1}, \gamma_{i,2k_i})$. We thus need to introduce a new multigraph for elements in $W^k_{\ell,m}$. This multigraph is $G^k_\gamma = \cup_{i} (G_{\gamma'_i} \cup G_{\gamma''_i})$ where $G_{\gamma'_i}$, $G_{\gamma''_i}$ are as in Definition  \ref{def1}. More precisely, the vertex set $G^k_\gamma$ of $V^k_\gamma = \cup_{i} (V_{\gamma'_i} \cup V_{\gamma''_i}) = \{ v_{i,t} : (i,t) \in [2m] \times [2\ell] : t  \notin \{2k_i-1,2k_i \}) \}$ and the set of visited pairs of half-edges is $\cE^k_\gamma = \cup_i (\cE_{\gamma'_i} \cup \cE_{\gamma''_i}) = \{ \{ \gamma_{i,2t-1}  , \gamma_{i,2t}  \} :   (i,t) \in [2m] \times [\ell], t \ne k_i \}$. 

\begin{lemma}\label{le:isopath0}
Let $\gamma \in W^k_{\ell,m}$ with $|V^k_\gamma| = s $ and $|\cE^k_\gamma| = a$. If $g = a - s +1$, then $\gamma$  is isomorphic to at most 
$
  n^s (d-1)^{4m} \PAR{ d (d-1) }^{ s} (d-1)^{2g -1} 
$ 
paths in $W^k_{\ell,m}$.    
\end{lemma}
\begin{proof}
The proof of lemma \ref{le:isopath} implies that $\gamma$ is isomorphic to at most 
$
  n^{s'} (d-1)^{2m} \PAR{ d (d-1) }^{ s} (d-1)^{2g' -1} 
$ 
where $s' = |V_\gamma|$, $a' = |\cE_\gamma|$ and $g' = a' - s' +1$. Since $v_{i,2t+1} = v_{i,2t}$, we have $V_\gamma = V^k _\gamma$ and thus $s' =s$. Also, $a' \leq a + 2m$. Hence, $g' \leq g + 2m$. The claim follows.
\end{proof}

We have the following upper bound on the number of isomorphism classes. This lemma is  a variant of \cite[Lemma 18]{BLM}.

\begin{lemma}\label{le:enumpath0}
Let $\cW^k_{\ell,m} (s,a) $ be the subset of canonical paths with $|V_\gamma^k| = s$, $|\cE_\gamma ^k|= a$. Let $g = a - s +1$. If $g \leq 0$, $\cW^k_{\ell,m} (s,a)$ is empty. If $g \geq 1$, we have 
$$
| \cW^k_{\ell,m} (s,a) | \leq  (4 \ell  m )^{12 m g  + 16 m}.
$$
\end{lemma}

\begin{proof}
Let $\gamma \in W_{\ell,m} ^k$. By assumption, for each $i \in [2m]$, $\gamma_i$ is tangled and non-backtracking. It follows that either $G_{\gamma'_i} \cup G_{\gamma''_i}$ is a connected graph with a cycle or both $G_{\gamma'_i}$ and $G_{\gamma''_i}$ contain a cycle (see Figure \ref{fig:Gamma3}).  Notably,   any connected component of $G^k_{\gamma}$ has a cycle, it follows that $|V^k_\gamma|  \leq |\cE^k_\gamma| $. It gives the first claim.

For the second claim, we adapt the proof of Lemma \ref{le:enumpath} and use the same terminology. For $i \in [2m]$, we define for $t \in [\ell]\backslash\{k_i\}$, $x_{i,t} = ( \gamma_{i,2t-1}, \gamma_{i,2t} )$. We then explore the sequence $(x_{i,t})$, $(i,t) \in T = \{ (i,t) \in [2m] \times [\ell] : t \ne k_i \}$ in lexicographic order. We denote by $(i,t)_-$ the preceding element in $T$ for the lexicographic order (with the convention $(1,1)_- = (1,0)$). For $(i,t) \in T$, we set $y_{i,t} = \{  \gamma_{i,2t-1}, \gamma_{i,2t} \}$. For each $(i,t) \in T$, we build a growing spanning forest $F_{i,t}$ of the graph visited so far as follows. The forest $F_{1,0}$ has a single vertex $\gamma_{1,1} = (1,1)$. By induction, for $(i,t) \in T$,  if the addition of $y_{i,t}$ to $F_{(i,t)_-}$  creates a cycle, we set $F_{i,t} = F_{(i,t)_-}$, and we say that  $y_{i,t}$ is an excess edge. Otherwise, we say that $(i,t)$ is a first time, that $y_{i,t}$ is a tree edge, and we define $F_{i,t}$ as the union of $F_{(i,t)_-}$ and $y_{i,t}$.

Let $p$  the number of connected components of $G^k_\gamma$. Since $F_{2m,\ell}$ is a spanning forest of $G^k_\gamma$, there are $a -s + p  = g + p-1$ excess edges. Let $g_i$ be the number of excess edges in the $i$-th component. We have $\sum_i g_i = g + p-1$. Besides, each connected component of $G_\gamma$ has a cycle, and thus $g_i \geq 1$. It follows that there are at most  $g$ excess edges in each connected component of $G_{\gamma}$.

We may now repeat the proof of Lemma \ref{le:enumpath}. The only difference is that, for each $i$, we use that $\gamma'_i$ and $\gamma''_i$ are tangled free, it gives short cycling times and long cycling times for both $\gamma'_i$ and $\gamma''_i$.  We also need a starting mark for $\gamma''_i$ equal to $(\gamma_{i,2k_i-1},\gamma_{i,2k_i}, \gamma_{i,2\tau-1})$ where  $(i,\tau)$ is the next time that $y_{i,\tau}$ will not be a tree edge of the forest $F_{(i,k_i+1)_-}$ constructed so far. Then, for each $i$, there are at most $2$ short cycling times and $2 (g-1)$ long cycling times (since each connected component has most $g$ excess edges). There are at most $ \ell  ^{4m g}$ ways to position these times. The number of distinct half-edges $\gamma_{i,t}$ in $\gamma$ is at most $h = 4 \ell m$. Arguing as in the proof of Lemma \ref{le:enumpath}, we get that    
$$
| \cW^k_{\ell,m} (s,a) | \leq \ell^{4 m g} ( s h^2 \ell )^{4m}    ( h^2 ) ^{4m (g-1)} (2a) ^{2m} ( h^3  )^{2m} , 
$$
where the factor $(h^3 )^{2m}$ accounts for the extra  starting marks of $\gamma''_i$.  Using $s \leq 2\ell m$, we obtain the claimed statement. \end{proof}

For $\gamma \in W^k_{\ell,m}$, the average contribution of $\gamma$ in \eqref{eq:trDeltak2} is
\begin{equation*}\label{eq:defmug0}
\mu_k(\gamma) =  \dE P_k (\gamma) =  \dE \prod_{i=1}^{2m}  \prod_{t=1}^{k_i-1}  M^{\veps_i}_{\gamma_{i,2t-1}  \gamma_{i,2t}}   \prod_{t=k_i+1}^{\ell} M^{\veps_i}_{\gamma_{i,2t-1}}.
\end{equation*}
Note that if $\gamma \sim \gamma'$ then $\mu_k (\gamma) = \mu_k (\gamma')$. 
\begin{lemma}\label{le:meanpath0}
There is a universal constant $c > 0$ such that, if $6 \ell m \leq \sqrt{ d n}$ and $\gamma \in W^k_{\ell,m} $ with $|V^k_\gamma| = s$, $|\cE^k_\gamma |= a$ and $g = a - s +1$, we have
$$
\ABS{ \mu_k(\gamma)} \leq  c^{g+m}  \PAR{\frac{ 1 }{dn} }^a.  
$$
\end{lemma}
\begin{proof} 
We adapt the proof of Lemma \ref{le:meanpath}. Let $\cE_i $ be the set of inconsistent edges of $\cup_i (\gamma'_i, \gamma''_i)$. Using the terminology of Lemma \ref{le:enumpath0}, a new inconsistent edge can appear at the the start of a non-empty sequence of first times or at a first visit of an excess edge. Every such step can create at most $2$ new inconsistent edges. Moreover, every non-empty sequence of
first times started in $\gamma'_i$ or $\gamma''_i$, $i \in [2m]$, either is followed by a first visit of an excess edge or ends $\gamma'_i$ or $\gamma''_i$. There are at $g +p -1 $ excess edges where $p$ is number of connected components of $G^k_\gamma$. By construction $p \leq 2m$. Hence, we find $|E_i| \leq 8 (g +p -1)+ 8m \leq 4 g +24m $. It remains to apply  Proposition \ref{le:exppath}. 
\end{proof}

\begin{proof}[Proof of Proposition \ref{prop:normR}]
We repeat the proof of Proposition  \ref{prop:normDelta}. For $n \geq 3$, we define 
\begin{equation}\label{eq:choicem0}
m = \left\lfloor  \frac{ \log n }{25 \log (\log n)} \right\rfloor.
\end{equation}
Since $\ell \leq \log n$, for this choice of $m$, $\ell m = o ( \log n) ^2$. We will  prove that for some constant $c>0$, for all $ k \in [\ell]$,
\begin{equation}\label{eq:boundS0}
S_k = \sum_{\gamma \in W^k_{\ell,m} }   | \mu_k(\gamma) | \leq (c \ell  m )^{28 m} (d-1)^{2\ell m},
\end{equation}
Then, from \eqref{eq:trDeltak20}, it implies 
$$
\dE  \sum_{k=1} ^\ell \| R^{(\ell)}_{k}  \| ^{2 m} \leq \ell (d-1)^{m}   (c \ell  m )^{28 m} (d-1)^{2\ell m}.
$$  
It remains to use Markov inequality to conclude.

We now check that  \eqref{eq:boundS0} holds. Using Lemma \ref{le:isopath0}, Lemma \ref{le:enumpath0} and Lemma \ref{le:meanpath0}, we obtain, with $g = g(a,s) = a - s+1$,
\begin{eqnarray*}
S_k & \leq & \sum_{s = 1}^{2\ell m} \sum_{a = s } ^{\infty} \max_{ \gamma \in \cW^k_{\ell,m} (s,a)} |\{ \gamma' \in W^k_{\ell,m} : \gamma' \sim \gamma \} | \times  |\cW^k_{\ell,m} (s,a) | \times  \max_{ \gamma \in \cW^k_{\ell,m} (s,a)} \mu_k(\gamma)  \\
& \leq &  \sum_{s=1}^{2\ell m} \sum_{a = s} ^{ \infty } n^s ( d (d-1) ) ^s (d-1)^{4m} (d-1)^{2g-1}   (4 \ell  m )^{12 m g + 16 m} c^{g +  m } \PAR{\frac{ 1}{dn} }^a     \nonumber\\
& = &   \sum_{s=1}^{2 \ell m} \sum_{h=0} ^{ \infty }   (d-1)  ^s (d-1)^{4m}   (d-1)^{2h+1}    (4 \ell  m )^{12 m h + 28 m} c^{h + 1 + m} \PAR{\frac{ 1}{dn} }^{ h}  , 
 \end{eqnarray*}
where at the last tine, we have performed the change of variable $h = a - s = g -1$. We find that for some new constant $c' >0$, for all $n$ large enough,
\begin{eqnarray*}
S_k &\leq &   (c'  \ell  m )^{28 m}   \sum_{s=1}^{ 2 \ell  m } (d-1)^{s} \sum_{h = 0} ^{\infty} \PAR{\frac{ c (d-1)^2 (4 \ell m) ^{12 m} }{ dn } }^{h}
\end{eqnarray*}
For our choice of $m$ in \eqref{eq:choicem0}, we have, for $n$ large enough, 
$
   (4 \ell m) ^{12 m} /  n   \leq n^{-1/25}. 
$
Hence, the above geometric series converges  and  the right-hand side of \eqref{eq:boundS0} is an upper bound for $S_k$. 
\end{proof}

\begin{remark}\label{rk:QR} Markov inequality and \eqref{eq:boundS0}  imply that for any $a >0$, there exists $c >0$ such that the event $\sum_k \| R_k^{(\ell)} \| \leq  ( \log n) ^{c} \PAR{ d -1} ^{\ell}$ has probability at least $1 - n^{-a}$ for all $n$ large enough.
\end{remark}

\begin{remark}\label{rk:IR}
A careful treatment in Lemma \ref{le:meanpath0} of edges visited once allows to prove that \whp $ \| R^{(\ell)}_{k} \| \leq ( \log n) ^{c}\PAR{ d - 1}^{\ell - k/2}.$ This refinement seems useless.
\end{remark}
\subsection{Proof of Theorem \ref{th:FrNB}}

\label{subsec:end}

All ingredients are finally gathered.  We fix some $0 < \kappa < 1/4$ and consider an integer sequence $\ell = \ell(n)$ such that $\ell \sim \kappa \log_{d-1} n$. By Lemma \ref{le:tangle} and Proposition \ref{le:decompBl}, if $\Omega$ is the event that $G(\sigma)$ is $\ell$-tangle free, 
\begin{eqnarray*}
\dP \PAR{|\lambda_2 | \geq \sqrt {d-1} + \veps } & = &\dP \PAR{|\lambda_2 | \geq \sqrt {d-1} + \veps \, ; \, \Omega} + o(1) \\
& \leq & \dP \PAR{ J^ { 1 /  \ell}  \geq   \sqrt {d-1} + \veps  } + o(1), 
\end{eqnarray*}
where $J = \|  \uB^{(\ell)} \|   +  \frac 1{d n}   \sum_{k = 1}^\ell \| R^{(\ell)}_{k} \|.$
On the other end, by Propositions \ref{prop:normDelta}-\ref{prop:normR}, \whp
\begin{align*}
J  &\leq ( \log n )^{15} (d-1)^{\ell/2}   +  \frac {(\log n)^{30} } {d n}  \PAR{ d - 1}^{\ell} \\
&\leq ( \log n )^{15} (d-1)^{\ell/2} + o(1),
\end{align*} since $(d-1)^\ell = n^{\kappa+o(1)} $. Finally, since $\ell \leq \log n$, $(\log n)^{15 / \ell} = 1 + O ( \log \log n / \log n )$. This concludes the proof.

\begin{remark}
For the proof of \eqref{eq:QFriedB}, we take  $0< \kappa < a/4$, and use Remarks \ref{rk:QB}-\ref{rk:QR}. We then choose $\veps = c \log n /(\log \log n)$ with $c$ large enough in the above argument. 
\end{remark}
\section{New eigenvalues of random lifts}

\label{sec:introbis}

We now introduce the model of random lifts and present an analog of  Theorem \ref{th:Fr} in this new context. Notation is independent of the previous section, but they are kept similar to help the reader.

Let us first introduce an abstract terminology for graphs. Let $V$ and $\vec E$ be countable sets. Elements of $V$ are called \emph{vertices} and elements of $\vec E$ are \emph{half-edges} or \emph{directed edges}. We assume that $\vec E$ is a set of even cardinality and that $\vec E$  comes with a matching  $\iota : \vec E \to \vec E$ ($\iota^2(e) = e$ and $\iota(e) \ne e$ for all $e \in \vec E$). This defines an equivalence classes on $\vec E$, $e \sim f$ iif $e = \iota(f)$ with two elements in each equivalence class. An equivalence class is called an \emph{edge}, the edge set is denoted by $E$. Finally, there is map $o : \vec E \to V$. We interpret $o (e)$ as the origin vertex of the  directed edge $e$ and $t(e) = o( \iota(e))$ as the end vertex of $e$. The quadruple $G  = (V,\vec E,\iota,o)$ will be called a \emph{graph}.  In words, $G$ is the multigraph with vertex set $V$ and edge set $E$, where an edge connects  the origin vertices $o(e)$ with $e$ one of the two directed edges in  the equivalence class of the edge, see Figure \ref{fig:ExLift}. This definition allows loops, which would correspond to directed edges $e \in \vec E$ such that $o(e) = t(e)$, and multiple edges, corresponding to $e \ne f \in \vec E$ such that $(o(e),t(e)) = (o(f),t(f))$.

\begin{figure}[htb]
\begin{center}  
\resizebox{8cm}{!}{
\begin{tikzpicture}[main node/.style={circle, draw , fill = lightgray, text = black, thick}]
\node[main node]  at (0,0) (1) {1} ;
\node[main node] at (0,2) (2) {2} ;

\draw[cyan, -,ultra thick] (1) to (2) ; 
\draw[ cyan, -,ultra thick] [out = 60 , in = 0] (2) to (0,3)  [out = 180 , in = 120]  to (2)  ;

\node[text = black!80] at (0,0.5) {$a$} ; 
\node[text = black!80] at (0,1.45) {$b$} ; 
\node[text = black!80] at (0.28,2.5) {$d$} ; 
\node[text = black!80] at (-0.28,2.47) {$c$} ;

\node[main node]  at (4,0) (11) {11} ;
\node[main node] at (4,2) (21) {21} ;
\node[main node]  at (6,0) (12) {12} ;
\node[main node] at (6,2) (22) {22} ;
\node[main node]  at (8,0) (13) {13} ;
\node[main node] at (8,2) (23) {23} ;

\draw[ cyan, -,ultra thick] [out = 60 , in = 0] (21) to (4,3)  [out = 180 , in = 120]  to (21)  ;
\draw[ cyan, -,ultra thick] [out = 30 , in = 150] (22) to (23)  ; \draw[ cyan, -,ultra thick] [out =-30 , in = -150] (22) to (23)  ;

\draw[cyan, -,ultra thick] (11) to (22) ; 
\draw[cyan, -,ultra thick] (12) to (23) ; 
\draw[cyan, -,ultra thick] (13) to (21) ; 

\end{tikzpicture}
}
\caption{Left: the graph $G = (V, E, \iota, o)$ with $V = [2]$, $\vec E = \{a,b,c,d\}$, $\iota(a) = b$, $\iota(c)= d$, $o(a) = 1$, $o(b) = o(c) = o(d) = 2$. Right: a $3$-lift of $G$ with $\sigma_a = (1 \, 2 \, 3)$, $\sigma_c = (1) (2\,3)$ (permutations written in cycle decomposition).} \label{fig:ExLift}

\end{center}\end{figure}

The adjacency matrix  $A$ of $G$ is the symmetric matrix indexed on  $V$ defined for all $u,v \in V$ by
$$
A_{uv}  = \sum_{ e \in \vec E }\IND ( (o(e),t(e) ) = (u,v)). 
$$

The non-backtracking matrix $B$  of $G$ is the matrix  indexed on $\vec E$ defined for all $e, f \in \vec E$ by 
$$
B_{e f}  = \IND ( t(e) = o(f) ) \IND ( f \ne \iota(e)). 
$$

We have the matrix identity
\begin{equation}\label{eq:defBMM}
B = S N,
\end{equation}
where $S_{ef} = \IND_{  f = \iota (e)}$ and $N_{ef} = \IND_{ o(e) = o(f) , e \ne f}$.

We set $r = |\vec E| $  and let  
\begin{equation*}\label{eq:eigBX}
\rho_1 \geq  \dots \geq |\rho_r|
\end{equation*}
be the eigenvalues of $B$ with multiplicities. We shall assume that $G$ is connected and $|\vec E | / 2 = |E| > |V|$. Then, the matrix $B$ is irreducible and its Perron eigenvalue $\rho_1$ is larger than $1$, see \cite{FrKo}.

We now define $n$-lifts and random $n$-lifts of a base graph $G$ (also known as $n$-coverings), for an example, see Figure \ref{fig:ExLift}.

\begin{definition}
Let $G= (V,\vec E,\iota,o)$ be a graph. For integer $n \geq 1$, let  $S_n^{G}$ be the family of permutations $\sigma = (\sigma_e)_{e \in \vec E}$ such that $\sigma_{\iota(e)} = \sigma^{-1}_{e}$ for all $e \in \vec E$. A {\em $n$-lift of $G$} is  a graph $G_n = (V_n, \vec E_n, \iota_n,o_n)$ such that
$$V_n = V \times [n] \quad  \hbox{ and } \quad \vec E_n  = \vec E \times [n],$$
and, for some $\sigma \in S_n^G$, for all $(e,i) \in \vec E_n$
$$
\iota_n (e,i) = (\iota(e),\sigma_e(i))  \quad \hbox{ and } \quad o_n(e,i) = (o(e),i).
$$
We write $G_n = G_n(\sigma)$ for the $n$-lift associated to $\sigma \in S_n^G$.  We say that $G_n$ is a {\em random $n$-lift} if $\sigma  = (\sigma_e)_{e \in \vec E}$ is uniformly distributed on $S_n^{G}$ (that is, the permutations $\sigma_e$, $e \in \vec E$, are uniform on $S_n$ and independent for all $e \ne \iota(e)$).
\end{definition}

For some positive integer $n$, let $\sigma \in S_n^{G}$ and $G_n = G_n (\sigma)$ be the $n$-lift as above. Let $B_n = B_n(\sigma)$ be the non-backtracking matrix of $G_n$.  From \eqref{eq:defBMM}, we have 
\begin{equation}\label{eq:defBMMn}
B_n = S_n N_n,
\end{equation}
where for all $\Be = (e,i) , \Bf = (f,j) \in \vec E_n$, $$(S_n)_{\Be \Bf} =  \IND_{e = \iota(f) } \IND_{  \sigma_e (i) = j}\quad \hbox{ and } \quad (N_n)_{\Be \Bf} =  \IND_{  o(e) = o(f) , e \ne  f} \IND_{ i = j} = (N \otimes I_n)_{\Be\Bf},$$ with $\otimes$ being the usual tensor product, $N$ as in \eqref{eq:defBMM} and $I_n$ being the identity matrix.

We consider the vector subspace $H$ of $\dR^{\vec E_n}$, 
\begin{equation}\label{eq:defH}
H = \SPAN ( \chi_e : e \in \vec E) ,
\end{equation} 
where for $e \in \vec E$, $\chi_e \in \dR^{\vec E_n}$ is given by $\chi_e  ( f, i)  = \IND_{ e = f}$. In words, $H$ is the vector space of vectors $x \in \dR^{\vec E_n}$ which are constant on each edge of $G$: $x(e,i) = x(e,j)$ for all $i,j\in [n]$ and $e \in \vec E$. The dimension of $H$ is $r = | \vec E|$. 

From \eqref{eq:defBMMn}, it is straightforward to check that  $B_n H \subset H$,  $B_n^*  H \subset H$ and the restriction of $B_n$ to $H$ is $B$. It follows that the spectrum of $B_n$ contains the spectrum of $B$ (with multiplicities). We will denote the  $n r -r$ new eigenvalues of $B_n$ by $\lambda_i$ with
\begin{equation}\label{eqM:defneweig}
|\lambda_1| \geq \cdots \geq |\lambda_{nr -r}|. 
\end{equation}
They are the eigenvalues of the restriction of $B_n$ to $H^\perp$ (they depend implicitly on $\sigma \in \sigma^G_n$).  The main result of this section is the following. 

\begin{theorem}\label{thM:main}
Let $G = (V,\vec E,\iota,o)$ be a finite connected graph with $|\vec E|/2 = |E| > |V|$.  Let $n$ be a positive integer and  $G_n$ a random $n$-lift of $G$. Let $B$ and $B_n$ be the non-backtracking matrices of $G$ and $G_n$. If the new eigenvalues of $B_n$  are denoted as in \eqref{eqM:defneweig} and $\rho_1$ is the Perron eigenvalue of $B$,  then, for any $\veps > 0$,  
$$
\lim_{n \to \infty} \dP \PAR{ |\lambda_1|   \geq \sqrt{\rho_1}+ \veps} = 0.
$$ 
\end{theorem}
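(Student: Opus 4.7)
\medskip

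\noindent\textbf{Proof sketch.} The plan is to replay the strategy of Sections~\ref{subsec:NB}--\ref{subsec:end} with the random permutations $(\sigma_e)_{e \in \vec E}$ in place of the single random matching $\sigma \in M(E)$. Fix $\ell \sim \kappa \log_{\rho_1} n$ for some small $\kappa > 0$. First I would show that with high probability $X_n$ is $\ell$-tangle-free: since each cycle in the $\ell$-ball of a vertex of $X_n$ is either a lift of a cycle of $X$ or is created by a coincidence of the permutations $\sigma_e$, a union bound analogous to Lemma~\ref{le:tangle} gives a probability of order $\rho_1^{4\ell}/n$. Next, the analog of Lemma~\ref{le:HR} applied to $S = P_H B_n^\ell$ (the restriction of $B_n^\ell$ acting on $H$, extended by $0$ on $H^\perp$) and $R = B_n^\ell - S$ yields
\[
|\lambda_1|^\ell \leq \sup_{x \in H^\perp,\ \|x\|_2 = 1} \|B_n^\ell x\|_2 .
\]
Indeed, $H$ is $B_n$- and $B_n^*$-invariant, so $\mathrm{im}(S), \mathrm{im}(S^*) \subset H$ and $R$ vanishes on $H$.

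On the $\ell$-tangle-free event one has $B_n^\ell = B_n^{(\ell)}$, where $B_n^{(\ell)}$ is defined as in~\eqref{eq:defD} by restricting the sum to tangle-free non-backtracking walks of length $\ell$ in $X_n$. Writing $M_e$ for the permutation matrix of $\sigma_e$ and $\underline M_e = M_e - J_n/n$ (where $J_n$ is the all-ones $n \times n$ matrix), one obtains the telescopic decomposition of~\eqref{eq:iopl}: each factor $M_e$ on a walk splits into $\underline M_e + J_n/n$, and collecting the $J_n/n$ terms produces rank-one contractions that project onto $H$. The outcome, exactly parallel to Lemma~\ref{le:decompBl}, is a bound
\[
|\lambda_1| \leq \Bigl( \|\underline B_n^{(\ell)}\| + \frac{1}{n} \sum_{k=0}^{\ell-1} \|\underline B_n^{(k)}\|\, \rho_1^{\ell-k} + \frac{1}{n} \sum_{k=1}^\ell \|R_k^{(\ell)}\| \Bigr)^{1/\ell},
\]
where $\underline B_n^{(k)}$ and $R_k^{(\ell)}$ are defined by replacing $M_{\gamma_{2s-1}\gamma_{2s}}$ by $(\underline M_e)_{ij}$ for the appropriate edge $e$ of $X$ and fibre indices $i,j$, and by using the combinatorial weight of non-backtracking paths of length $3$ in $X_n$ in the definition of the remainder.

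The main step is the analog of Propositions~\ref{prop:normDelta} and~\ref{prop:normR}: one runs the trace method on $(\underline B_n^{(k)} (\underline B_n^{(k)})^*)^m$ with $m \asymp \log n / \log\log n$, and enumerates the resulting closed walks of length $\asymp m\ell$ in $X_n$ according to their projection to $X$. The key probabilistic input is the permutation analog of Proposition~\ref{le:exppath}: for a word in $\underline M_e$ and $M_e$ entries corresponding to an admissible sequence, the expectation is bounded by $(1/n)^a$ times a small factor $(c k / \sqrt n)^{a_1}$ per consistent edge of weight one, where $a$ is the number of distinct entries used. For uniform random permutations this follows from the same elementary Pochhammer-symbol computation as Lemma~\ref{le:bin} applied fibrewise, since $\underline M_e$ is exactly the projection of $M_e$ on $\mathbf 1^\perp \otimes \mathbf 1^\perp$ and for $k \ll \sqrt n$ the entries of one permutation behave like independent centred random variables up to a small bias.

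The combinatorial enumeration follows the pattern of Lemmas~\ref{le:isopath}--\ref{le:meanpath}, but now paths are classified by their projection to $X$ together with the fibre data, and the tangle-free constraint again limits the number of long cycling times per strand to $\chi - 1$. Because any closed non-backtracking walk in $X_n$ projects to a closed non-backtracking walk in $X$, the number of walks of length $k$ through a fixed edge is controlled by the spectral radius $\rho_1$ of $B$ (via $\|B^k\| \leq C \rho_1^k$). This replaces the bound $(d-1)^k$ and produces a deterministic bound $\|\underline B_n^{(k)}\| \leq (\log n)^C \rho_1^{k/2}$ whp, and similarly $\|R_k^{(\ell)}\| \leq (\log n)^C \rho_1^{\ell - k/2}$. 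Inserting these into the decomposition above and taking $1/\ell$-th power gives $|\lambda_1| \leq \sqrt{\rho_1} (1 + O(\log\log n / \log n))$, whence the theorem. The hard part is the enumeration and moment computation: one must track the irregular structure of $X$ so that the effective exponential growth rate coming out of the trace bound is exactly $\rho_1$, using $\|B^k\| \leq C \rho_1^k$ (valid since $B$ is irreducible and of finite dimension, so its powers are controlled by its Perron eigenvalue) rather than a crude vertex-degree bound; the rest is bookkeeping parallel to \S\ref{subsec:PC}.
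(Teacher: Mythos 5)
Your proposal is correct and follows essentially the same route as the paper: apply the Lemma~\ref{le:HR} mechanism with the $B_n$- and $B_n^*$-invariant subspace $H$, restrict to tangle-free paths, perform the telescopic decomposition with $\underline M_e = M_e - J/n$, and run the trace method combined with a path enumeration that replaces the $(d-1)^k$ growth by the Perron rate of $B$. One small point your sketch overlooks: because the block $K = B \otimes J$ annihilates $H^\perp$ (each fibre of an element of $H^\perp$ sums to zero), the middle term $\frac{1}{n}\sum_{k}\|\underline B_n^{(k)}\|\rho_1^{\ell-k}$ you carry along actually vanishes identically, so the paper's Lemma~\ref{leM:decompBl} is strictly simpler than its $d$-regular analogue Lemma~\ref{le:decompBl}; likewise the tangle-free probability estimate should use the maximal degree $d$ of $X$ (not $\rho_1$) as its base, but choosing $\kappa$ small enough handles this.
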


Under an extra assumption on the graph $G$, Theorem \ref{thM:main} is contained in \cite[Theorem 0.2.6]{FrKo}.
Arguing as above, if $A_n$ is the adjacency matrix of $G_n$, then the spectrum of $A_n$ contains the spectrum of $A$.  We may similarly define the new eigenvalues of $A_n$ as the remaining eigenvalues. If $G$ is a $d$-regular multigraph, then the Ihara-Bass formula \eqref{eq:IharaBass} remains valid. As a by-product of Theorem \ref{thM:main}, we deduce the following corollary, 
\begin{corollary}[Friedman and Kohler \cite{FrKo}]\label{corM:main}
Let $G = (V,\vec E,\iota,o)$ be a finite $d$-regular graph with $d \geq 3$. Let $A_n$ be the adjacency matrix of $G_n$, a random $n$-lift of $G$.  If $\mu_1$ is the largest  new eigenvalues of $A_n$ in absolute value, then, for any $\veps > 0$,  
$$
\lim_{n \to \infty} \dP \PAR{ |\mu_1|   \geq 2 \sqrt{d-1}+ \veps} = 0. 
$$
\end{corollary}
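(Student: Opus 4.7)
The plan is to deduce Corollary \ref{corM:main} from Theorem \ref{thM:main} via the Ihara--Bass formula, exactly in the spirit of the deduction of Theorem \ref{th:Fr} from Theorem \ref{th:FrNB} carried out in Section \ref{sec:proofF}. Since $X$ is $d$-regular, the identity \eqref{eq:IharaBass} applies to both $X$ and $X_n$, and unpacking it as in \eqref{eq:rosetta} gives, with $r_n = |E_n| - |V_n|$ and $r = |E| - |V|$, the multiset identity
\[
\sigma(B_n) = \{+1\}^{r_n} \cup \{-1\}^{r_n} \cup \bigcup_{\mu \in \sigma(A_n)} \BRA{\lambda : \lambda^2 - \mu \lambda + (d-1) = 0},
\]
and the analogous identity for $B$ and $A$.

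By construction of the lift, $\sigma(A) \subset \sigma(A_n)$ and $\sigma(B) \subset \sigma(B_n)$ as multisets, and restricting the Ihara--Bass identity to the quotients shows that the new eigenvalues of $B_n$ form the multiset
\[
\{+1\}^{r_n - r} \cup \{-1\}^{r_n - r} \cup \bigcup_{\mu \text{ new eig.\ of } A_n} \BRA{\lambda : \lambda^2 - \mu \lambda + (d-1) = 0}.
\]
Thus each new (real) eigenvalue $\mu$ of $A_n$ lifts to two new eigenvalues $\lambda_{\pm}(\mu)$ of $B_n$ satisfying $\lambda_+ \lambda_- = d-1$ and $\lambda_+ + \lambda_- = \mu$. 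For a $d$-regular multi-graph the Perron eigenvalue of $B$ is $\rho_1 = d-1$ (the growth rate of the $d$-regular universal cover), so Theorem \ref{thM:main} gives, whp, $|\lambda| \leq \sqrt{d-1} + \veps$ for every new eigenvalue $\lambda$ of $B_n$, and in particular for both $\lambda_{\pm}(\mu)$.

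To finish, let $\mu$ be a new eigenvalue of $A_n$. If $|\mu| \leq 2\sqrt{d-1}$ there is nothing to prove. Otherwise $\lambda_{\pm}(\mu)$ are real of the same sign, and since $|\lambda_+||\lambda_-| = d-1$ we may assume $|\lambda_+| \geq \sqrt{d-1}$. The function $x \mapsto x + (d-1)/x$ is increasing on $[\sqrt{d-1}, \infty)$, so
\[
|\mu| = |\lambda_+| + \frac{d-1}{|\lambda_+|} \leq (\sqrt{d-1}+\veps) + \frac{d-1}{\sqrt{d-1}+\veps} = 2\sqrt{d-1} + O_d(\veps^2),
\]
and absorbing the $O_d(\veps^2)$ term into a relabelling of $\veps$ concludes the proof. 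There is no real obstacle here: the argument is pure spectral bookkeeping via Ihara--Bass followed by an elementary one-variable convexity estimate, with all the substantive probabilistic and combinatorial work already carried out in Theorem \ref{thM:main}.
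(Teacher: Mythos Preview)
Your proof is correct and follows precisely the approach the paper intends: the paper states only that for a $d$-regular multi-graph the Ihara--Bass formula \eqref{eq:IharaBass} remains valid and that the corollary is then a by-product of Theorem~\ref{thM:main}, without spelling out the elementary calculus step you carry out. Your explicit multiset bookkeeping of the new eigenvalues and the monotonicity argument for $x\mapsto x+(d-1)/x$ on $[\sqrt{d-1},\infty)$ fill in exactly the details the paper leaves implicit.
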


If $d$ is even and $G$ is a bouquet of $d/2$-loops then by standard contiguity results, Corollary \ref{corM:main} implies Friedman's Theorem \ref{th:Fr}. Also, coming back to Theorem \ref{thM:main}, Angel, Friedman and Hoory \cite{AFH} proved that $\sqrt{ \rho_1}$ is the spectral radius of  the non-backtracking operator of $T$, the universal covering tree of $G$.  In \cite{MR1978881}, Friedman conjectures similarly that, for a random $n$-lift, the new eigenvalues of the adjacency matrix have absolute value bounded by $\rho + o(1)$ where $\rho$ is the spectral radius of the adjacency operator of $T$. In the case of $d$-regular multigraphs,  $T$ is an infinite $d$-regular tree and $\rho = 2 \sqrt {d-1}$. It follows that Corollary \ref{corM:main} is consistent with the conjecture. For the general case, the best known result is due to Puder \cite{puder} who proves the upper bound $\sqrt3 \rho +o(1)$ for the new eigenvalues. For a large class of local operators, one may expect that the new eigenvalues of a random $n$-lift will be bounded by $\rho + o(1)$   where $\rho$ is the spectral radius of this operator on $T$. This phenomenon can be thought as the analog of the weak Ramanujan property for arbitrary graphs. Theorem \ref{thM:main} proves this phenomenon for the specific case of the non-backtracking operator.

\section{Proof of  Theorem \ref{thM:main}}

\label{secM:proof}

Theorem \ref{thM:main} is proved exactly as Theorem \ref{th:FrNB}. The main noticeable difference will appear in the path counting argument. Notation are as close as possible to Section \ref{sec:proofF}. 

\subsection{Path decomposition}
\label{subsecM:PD}

In this subsection, we fix $\sigma \in S_n^G$ and consider its associated $n$-lift, $G_n= G_n(\sigma) = (V_n,\vec E_n, \iota_n,o_n)$. Let  $B_n$ be its non-backtracking matrix.  From now on, elements in $\vec E_n$ will be called half-edges.

Let $H$ be as in \eqref{eq:defH} and $P$ be the orthogonal projection onto $H$. Since $B_n H \subset H$ and $B^*_n H \subset H$, we have the decomposition $B_n = P B_n P + ( I - P) B_n (I-P)$. Let $\ell$ be a positive integer. We apply Lemma \ref{le:HR} to  
$
S=  P B_n^\ell P   
$ and $R = B_n^{\ell} - S = (I - P) B_n^\ell (I-P)$.  We find that for the largest new eigenvalue  of $B_n$, 
\begin{equation}\label{eqM:basicl2}
| \lambda_1|^\ell \leq \sup_{x \in H^\perp , \| x \|_2 = 1}   \| B_n^{\ell} x \|_2 .
\end{equation}

We now adapt Definitions \ref{def1}-\ref{def2} to our new setting.

\begin{definition}\label{defM1}
For a positive integer $k$, let $ \gamma = (\gamma_1 , \ldots, \gamma_k) $ with $\gamma_t  = ( e_t,  i_t) \in \vec E_n$.
\begin{enumerate}[-]
\item 
The set of visited vertices, edges and pairs of half-edges are denoted by  $V_\gamma= \{ o_n(\gamma_t) : t \in [k]\}$ and $\cE_\gamma =\{ \{\gamma_{2t-1} ,  \gamma_{2t} \}   : 1 \leq t \leq k/2\}$. The multigraph associated to $\gamma$ is $ G_\gamma$, its vertex set is $V_\gamma$ and to each element $ \{\Be, \Bf \} \in \cE_\gamma$, we associate an edge in $G_\gamma$ between $o_n(\Be)$ and $o_n(\Bf)$ (this can be formalized in the abstract graph terminology). 
\item 
If for all $t \geq 1$, $e_{2t } = \iota(e_{2t-1})$, $ o_n(\gamma_{2t+1}) = o_n(\gamma_{2t})$ and $\gamma_{2t+1} \ne \gamma_{2t}$, the sequence $\gamma$ will be called a {\em non-backtracking path} (that is, $S_{e_{2t} e_{2t-1}} = (N_n)_{\gamma_{2t} \gamma_{2t+1}} = 1$ with $S,N_n$ as in \eqref{eq:defBMM}-\eqref{eq:defBMMn}). 
If  $k = 2 \ell +1$, the set of non-backtracking paths of length $\ell$ is $\Gamma^{\ell}$.  If $\Be, \Bf \in \vec E_n$, we denote by $\Gamma^{\ell}_{\Be \Bf}$  paths in $\Gamma^\ell$ such that $\gamma_1 = \Be$, $\gamma_k  = \Bf$.
The sets $F^{\ell}$ and $F^{\ell} _{\Be \Bf}$ will denote the subsets of tangle-free paths in $\Gamma^{\ell}$ and $\Gamma^{\ell}_{\Be \Bf}$ (see Definition \ref{def2}).
\end{enumerate}
\end{definition}

Importantly, the definitions of  non-backtracking path and tangled paths do not depend on  $\sigma \in S_n^G$. Note also, in Definition \ref{defM1}, that if $\gamma \in \Gamma^{\ell}$, then $(e_1, \ldots, e_{2\ell +1}) \in \vec E ^{2\ell +1}$ is a proper non-backtracking path on the graph $G$:
$$
\prod_{t=1}^{\ell} S_{e_{2t-1} e_{2t}} N_{e_{2t} e_{2t+1}}   = 1.
$$
where $S,N$ are defined in \eqref{eq:defBMM}.

For $e   \in \vec E$, let $M_{e}$  be the permutation matrix associated to $\sigma_{e}$, defined for all $i,j \in [n]$, by
\begin{equation}\label{eqM:defM}
(M_{e})_{i j} =\IND ( \sigma_e(i) =  j) .
\end{equation}
Since $\sigma_{\iota(e)} = \sigma^{-1}_e$, we have $M_{\iota(e)} = M^*_e = M_e ^{-1}$.  If $\Be = (e,i)$, $\Bf  = (\iota(e) ,j)$, we also set 
$$
M_{\Be \Bf} = M_{\Bf \Be} = (M_{e} )_{i j}.  
$$

Let $\ell$ be a positive integer. From \eqref{eq:defBMMn}, for all positive integers $\ell$ and $\Be,\Bf \in \vec E_n$,
$$
(B_n^{\ell}) _{\Be \Bf} = \sum_{\gamma \in \Gamma^{\ell} _{\Be \Bf}} \prod_{s=1}^{\ell} M_{\gamma_{2s-1} \gamma_{2s}},
$$

Moreover, if $G_n$ is $\ell$-tangle-free, we have
\begin{equation}\label{eqM:BkBk}
B_n^{\ell}   = B_n^{(\ell)},
\end{equation}
where we have set for $\Be,\Bf \in \vec E_n$,
$$
(B_n^{(\ell)} ) _{\Be \Bf} = \sum_{\gamma \in F^{\ell} _{\Be \Bf}} \prod_{s=1}^{\ell} M_{\gamma_{2s-1} \gamma_{2s}}.
$$

Let $\chi \in \dR^n$ be the vector with all coordinates equal to $1$ and $\chi^*$ its conjugate transpose of $\chi$. We introduce for $e \in \vec E$, $\Be = (e,i)$, $\Bf  = (\iota(e) ,j)$ in $\vec E_n$,
\begin{equation*}\label{eqM:defW}
\underline M_{e} = M_e - \frac {\chi \chi^*} n \quad \hbox{ and } \quad \underline M_{\Be \Bf} =  \underline M_{\Bf  \Be}  = (\underline M_{e})_{i j},
\end{equation*}
($\underline M_e$ is the orthogonal projection of $M_e$ onto $\chi^\perp$). We define the matrix on $\dR^{\vec E_n}$, defined for all $\Be,\Bf \in \vec E_n$ by
\begin{equation}\label{eqM:defD}
(\underline B_n^{(\ell)} ) _{\Be \Bf} = \sum_{\gamma \in F^{\ell} _{\Be \Bf}} \prod_{s=1}^{\ell} \underline M_{\gamma_{2s-1} \gamma_{2s}}.
\end{equation}
 We use the same telescopic sum decomposition than in \eqref{eq:iopl}, we find 
\begin{eqnarray}\label{eqM:iopl}
(B^{(\ell)}_n) _{\Be \Bf} &  =  & (\uB_n^{(\ell)} ) _{\Be\Bf}   + \frac 1 n  \sum_{\gamma \in F^{\ell} _{\Be\Bf}}  p_k(\gamma),
\end{eqnarray}
where, for all $\gamma \in F^{\ell}$, 
$$
p_k (\gamma) =    \sum_{k = 1}^\ell \prod_{s=1}^{k-1} \underline M_{\gamma_{2s-1} \gamma_{2s}} \prod_{k+1}^\ell M_{\gamma_{2s-1} \gamma_{2s}}
$$

We rewrite \eqref{eqM:iopl} as a sum of matrix products of $\uB^{(k)}_n$ and $B_n^{(k)}$ up to some remainder terms.  Fix $k \in [\ell]$, we decompose a path $\gamma  = (\gamma_1, \ldots, \gamma_{2\ell +1} ) \in \Gamma^{\ell}$ as a path $\gamma'=  (\gamma_1, \ldots, \gamma_{2k -1} ) \in  \Gamma ^{k-1}$, a path $\gamma''= (\gamma_{2k-1}, \gamma_{2k} , \gamma_{2k+1} ) \in \Gamma^{1}$ and a path $\gamma''' = (\gamma_{2k+1}, \ldots, \gamma_{2 \ell +1}) \in \Gamma^{\ell - k}$.  For each $k\in [\ell]$, we denote by $F^{\ell}_{k}$ the set of $\gamma \in \Gamma^{\ell}$ such that, with $\gamma', \gamma'', \gamma'''$ as above, $\gamma'  \in F^{k-1}$, $\gamma'' \in F^{1} = \Gamma^1$ and $\gamma''' \in F^{k-\ell}$. For $\Be,\Bf \in \vec E_n$, we set $F^\ell_{k,\Be \Bf}  \cap \Gamma^{\ell}_{\Be \Bf}$. We have the inclusion $F^{\ell} \subset F^{\ell}_{k}$. We define 
\begin{equation}\label{eqM:defR}
(R_k^{(\ell)} )_{\Be \Bf}   =  \sum_{\gamma \in F^{\ell} _{k, \Be \Bf} \backslash F^\ell_{\Be \Bf}}   p_k(\gamma).
\end{equation}

For $\Be,\Bf \in \vec E_n$, with $\Be = (e,i)$, $\Bf = (f,j)$, we observe that  the cardinality of $\Gamma^1_{\Be \Bf} = F^1_{\Be \Bf}$ is $B_{e f} = (B \otimes \chi \chi^*)_{\Be \Bf}$. The rule of matrix multiplication gives 
\begin{eqnarray*}
 \sum_{\gamma \in F^{\ell} _{k,\Be \Bf}}  p_k(\gamma)  & = &( \uB_n^{(k-1)} ( B \otimes \chi \chi^* )  B_n^{(\ell - k)})_{\Be \Bf}. 
\end{eqnarray*}

From \eqref{eqM:iopl}, we find that  
\begin{eqnarray*}
B^{(\ell)}_n   &= &\uB^{(\ell)}_n    +    \frac 1 {n}  \sum_{k = 1} ^{\ell} \uB^{(k-1)}_n  ( B \otimes \chi \chi^* )   B^{(\ell-k)}_n   -   \frac 1 {n}     \sum_{k = 1}^{\ell} R^{(\ell)}_k .
\end{eqnarray*}

Observe that if $x \in H^\perp$, 
$$
(B \otimes \chi \chi^* ) x = 0 \quad \hbox{ and } \quad B_n H^{\perp} \subset H^{\perp}. 
$$
Hence, if $G_n$ is $\ell$-tangle free and $ x  \in H^\perp$,  then, from \eqref{eqM:BkBk},  we find
\begin{eqnarray*}
  B^{(\ell)}_n  x  &   = &  \uB^{(\ell)}_n   x   -    \frac 1 {n}     \sum_{k = 1}^{\ell} R^{(\ell)}_k x.
\end{eqnarray*}
Putting this last inequality in \eqref{eqM:basicl2}, we arrive at the following statement. 

\begin{proposition}\label{leM:decompBl}
Let $\ell \geq 1$ be an integer and $\sigma \in S_n^G$ be such that $G_n = G_n(\sigma)$ is $\ell$-tangle free. Then, if $\lambda_1$ is the largest new eigenvalue of $B_n = B_n (\sigma)$, we have
$$
|\lambda_1 | \leq  \PAR{ \| \uB^{(\ell)}_n   \|   +    \frac 1 {n}     \sum_{k = 1}^{\ell} \| R^{(\ell)}_k \|}^{1/\ell}.$$
\end{proposition}

\subsection{Computation on random lifts}

\label{subsecM:CM}

In the remainder of this section, $G_n= G_n(\sigma)$ where $\sigma$ is uniform on $S_n^G$.  Our first lemma checks that \whp$G_n$ is $\ell$-tangle free if $\ell$ is not too large. 
The maximal degree in the base graph $G$ is defined as  
\begin{equation}\label{eq:deftheta}
d  = \max_{ v \in V} \ABS{\BRA{ e \in \vec E : o(e) = v }}.
\end{equation}

\begin{lemma}\label{leM:tangle}
Assume that $d \geq 3$. Let $n$ and $\ell$ be positive integers. Let $\sigma$ be uniformly distributed on $S_n^G$. Then $G_n = G_n(\sigma)$ is $\ell$-tangle free with probability $1 - O ((d-1)^{4\ell} / n)$.
\end{lemma}

\begin{proof} For $\bm v = (v,i) \in V_n$, we set  $\vec E_n (\bm v) = \{  \Be \in \vec E_n : o_n (\Be ) = \bm v\} = \{ (e,i) : o(e) = v \}$. Note that $|\vec E_n(\bm v ) | \leq d$. The proof is a simple adaptation of Lemma \ref{le:tangle}. We fix $\bm v  \in V_n$, and we explore its neighborhood sequentially. We start with $D_0 = \vec E_n (\bm v) $.  At stage $t \geq 0$, if $D_t$ is not empty, take an element $\Be_{t+1}  = (e_{t+1},i_{t+1})$ in $D_t$ with $o_n(\Be_{t+1})$ at minimal graph distance  from $\bm v$ (we break ties with lexicographic order). We set $\Bf_{t+1} = \iota_n (\Be_{t+1}) = ( \iota(e_{t+1}) , \sigma_{e_{t+1}} (i_{t+1}))$ .  If  $\Bf_{t+1}  \in D_{t}$, we set $D_{t+1} = D_t \backslash \{ \Be_{t+1}, \Bf_{t+1} \}$, and, otherwise,  $$D_{t+1} =  \PAR{D_t \cup  \vec E_n( o_n(\Bf_{t+1} ))  } \backslash \{ \Be_{t+1} , \Bf_{t+1}\}.$$ 
At stage $\tau \leq n r$, $D_\tau$ is empty, and we have explored the connected component of $\bm v$. Before stage 
$$ T = \sum_{s = 1}^{\ell -1} d (d-1)^{s-1} = O \PAR{ (d-1)^\ell} ,$$
 we have revealed the subgraph spanned by the vertices at distance at most  $\ell$ from $\bm v$. Also, if $\bm v$ has two distinct cycles in its $\ell$-neighborhood, then
$
S(\bm v) = S_{\tau \wedge T}  \geq 2 . 
$
where, for $t \geq 1$,
$$
S_t =  \sum_{s= 1}^{t} \veps_{s} \quad   \hbox{ and } \quad \veps_t = \IND ( \Bf_{t} \in D_{t-1} ). 
$$
At stage $t \geq 0$, for any $e \in \vec E$, at most $t$ values of $\sigma_e$ have been discovered and $
|D_t | \leq d + (d-1)(t-1)$. 

Let $\cF_t$ be the $\sigma$-algebra generated by $(D_0, \cdots, D_{t})$ and $\dP_{\cF_t}$ be its conditional probability distribution. Then, $\tau$ is a stopping time. Also, if $t < \tau \wedge T$, let $A_t = \{ (\iota(e_{t+1}),i) \in D_t : i \in [n]\}$   and $n_t \leq t$ be the number of $s \leq t$ such that $\Bf_s$ or $\Be_s$ is of the form $(  \iota(e_{t+1}),i)$, $i \in [n]$. We find
$$
\dP_{\cF_t} ( \veps_{t+1} = 1) = \frac{|A_{t}|  }{n - n_t}   \leq c \PAR{ \frac{ T }{n} } = q. 
$$
Hence, arguing as in Lemma \ref{le:tangle}, from the union bound,  
$$
\dP \PAR{ G \hbox{ is $\ell$-tangled}}\leq \sum_{\bm v \in V_n}  \dP ( S(\bm v) \geq 2)  \leq \sum_{\bm v \in V_n} q^2 T^2   = O \PAR{ \frac{ (d-1)^{4 \ell}  }{n}}.
$$
This concludes the proof of Lemma \ref{leM:tangle}.  
\end{proof}

%
%
%
%

As in Definition \ref{def:defcons}, for $\gamma = ( \gamma_1, \cdots , \gamma_{k}) \in \vec E_n^{k}$, with $\gamma_t = (e_t, i_t)$, we say that  an edge $\{\Be, \Bf \}   \in \cE_\gamma$ is {\em consistent}, if $\{ t :   \Be \in \{ \gamma_{2 t -1}, \gamma_{2t} \} \} = \{ t :  \Bf \in \{ \gamma_{2t-1}, \gamma_{2t}\} \}  = \{ t :   \{ \Be , \Bf \} =  \{ \gamma_{2t-1}, \gamma_{2t}\}\}$.  It is inconsistent otherwise.  The {\em multiplicity} of $y = \{ \Be , \Bf \} \in \cE_\gamma$ is $\sum_t \IND ( \{\gamma_{2t-1} , \gamma_{2t} \}  = y )$. We have the following analog of Proposition \ref{le:exppath}.
\begin{proposition}
\label{leM:exppath}
Let  $n$ be a  positive integer and let $\sigma$ be uniformly distributed on $S^G_n$. Let $1 \leq k_0 \leq  k \leq \sqrt{n}$ be integers and $ \gamma \in \vec E_n^{2k}$ such that  for all $t \in [k]$,  $e_{2t} = \iota(e_{2t-1})$, where $\gamma_{t}  = (e_t , i_t)$.  For some universal constant $c >0$, we have, 
$$
\ABS{ \dE \prod_{t= 1} ^{k_0}  \underline M_{\gamma_{2t-1} \gamma_{2t}} \prod_{t= k_0+1} ^{k}  M_{\gamma_{2t-1} \gamma_{2t}}} \leq c \, 2^{b} \PAR{ \frac{1 }{ n }}^{a} \PAR{ \frac{  3k   }{ \sqrt{n} }}^{a_1}, 
$$ 
where $a = | E_\gamma |$,  $b$  is the number of  $t \in [k_0]$ such that $\{ \gamma_{2t-1}, \gamma_{2t}\}$ is an inconsistent edge of multiplicity $1$ in $\cE_\gamma$,  and $a_1$ is the number of  $t \in [k_0]$ such that $\{ \gamma_{2t-1}, \gamma_{2t}\}$ is a consistent edge of multiplicity $1$ in $\cE_\gamma$.\end{proposition}

Setting $m = dn$ in Proposition \ref{le:exppath}, the two propositions are similar. This is no surprise, the set of matchings is a subset of the set of permutations. The two proofs are nearly identical . Note however the slight difference between the definitions of consistency (for matchings, for an edge $\{ e, f\}$ to be consistent there is the extra condition $e \ne f$).

\begin{proof}  {\em Step 1: reduction to a single edge of the base graph}. Let $E$ be the set of edges of $\vec E$ (recall that an edge is an equivalence class of two half-edges). For each edge $e \in E$, we choose a distinguished half-edge.   Let $\vec E^+$ be the set of distinguished half-edges. Then, since $e_{2t} = \iota(e_{2t-1})$, up to reversing $\gamma_{2t-1}$ and $\gamma_{2t}$, we may assume without loss of generality that for all $t$, $e_{2t-1} \in \vec E^+$.  Since the random permutations $(\sigma_{e})_{e \in \vec E^+}$ are independent,   we have
$$
\dE \prod_{t= 1} ^{k_0}  \underline M_{\gamma_{2t-1} \gamma_{2t}} \prod_{t= k_0+1} ^{k}  M_{\gamma_{2t-1} \gamma_{2t} } = \prod_{e \in \vec E^+}  \dE \prod_{1 \leq t \leq k_0 \, :\,  e_{2t-1}  = e}  \underline M_{\gamma_{2t-1} \gamma_{2t} } \prod_{k_0  +1 \leq t\leq  k \, : \,   e_{2t-1} = e }  M_{\gamma_{2t-1} \gamma_{2t}  }
$$
 It thus suffices to prove the statement for $\gamma \in \vec E_n^{2k}$ such that for all $t \geq 1$, $e_{2t-1} = e $. We make this assumption in the remainder of the proof.

\noindent  {\em Step 2: case of a single edge of the base graph}.  We may now repeat the proof of Proposition \ref{leM:exppath}. For ease of notation, we set $\gamma_{2t-1} = (e , i_t)$, $\gamma_{2t} = (\iota (e), j_t)$, $\sigma = \sigma_e$,  $M = M_e$ and $\underline M = \underline M_e$. We have $ E_\gamma = \{ y_1, \ldots , y_a  \}$ with $y_{t} = \{ \Be_t , \Bf_t  \}$, $\Be_t = (e , i_{t})$, $\Bf_t = (\iota(e), j_t)$. 
 We set
$$ I = \{ i_t : t \in [a] \} \quad \hbox{ and } \quad J = \{ j_t : t \in [a] \},$$
and $I^c = [n] \backslash I$, $J^c = [n] \backslash J$. We have $| I | \vee |J|  \leq  a$. The multiplicity of $y_t$ is equal to $p_t + q_t$, where $p_t$ is the multiplicity of $y_t$ in $(\gamma_1, \ldots, \gamma_{2k_0})$ and $q_t$ its multiplicity in $(\gamma_{2k_0 +1}, \ldots, \gamma_{2k})$.  We write 
$$
P =  \prod_{t= 1} ^{k_0}  \underline M_{\gamma_{2t-1} \gamma_{2t} } \prod_{t= k_0+1} ^{k}  M_{\gamma_{2t-1} \gamma_{2t} }  = \prod_{t=1}^a  \underline M_{i_tj_t} ^{p_t}  M_{i_tj_t} ^{q_t} . 
$$

Let $T$ be the set of $y_t = \{e_t , f_t \}$ such that $y_t$ is consistent, $p_t = 1$ and $q_t = 0$.  By assumption $|T| = a_1$. Note that for any $t \in T$, $i_t \ne i_s$ and $j_t \ne j_s$ for all $s \ne t$. Let $T^*\subset T$ be the random subset of $t \in T$ such that $\sigma(i_t) \in J^c \cup \{ j_t \}$ and $\sigma^{-1} (j_t) \in I^c\cup \{ i_t \}$. Similarly, let $S \subset T$ be the random subset of $t \in T$ such that $\sigma(i_t) = j_s$ or  $\sigma(i_s) = j_t$ for some $s \in T \backslash \{t \}$.

By construction, if $t \in S$, 
$$
 \underline M_{i_t j_t} ^{p_t}  M_{i_t j_t} ^{q_t} =  \underline M_{i_t j_t}   =  - \frac {1}{n}.
$$
We thus have 
$$
P = (-n)^{-|S|} P^* Q,
$$
where 
$$
P^*  = \prod_{t \in T^*}  \underline M_{i_tj_t}  \quad \hbox{ and } \quad Q =  \prod_{t\notin  S \cup T^* }   \underline M_{i_tj_t} ^{p_t}  M_{i_t j_t} ^{q_t} . 
$$

Let $\cF$  be the $\sigma$-algebra generated by the variables $T^*$ and $\sigma(i_t), \sigma^{-1}(j_t) , t \notin  T^*$. We denote by $\dE_{\cF}$ its conditional expectation. By construction, the variables $S,T^*$ and $Q$ are  $\cF$-measurable. From Jensen's inequality, we get
\begin{equation}\label{eqM:defPQP}
\ABS{ \dE \SBRA{ P}} = \ABS{ \dE \SBRA{(-n)^{-|S|} \, Q  \, \dE_{\cF} \SBRA{P^*} }} \leq   \dE \SBRA{ n^{-|S|} \, |Q| \,  \ABS{ \dE_{\cF} \SBRA{P^*} }}. 
\end{equation}

We start by evaluating $ \dE_{\cF^*} \SBRA{P^*}$ in \eqref{eqM:defPQP}. If $\hat N$ is the number of $t \in T^*$ such that $\sigma(i_t) \ne j_t$, we have 
\begin{eqnarray*}
 P^*   & =&  \PAR{ 1 - \frac 1 n}^{|T^* |  - \hat N} \PAR{ - \frac 1 n}^{\hat N}. 
\end{eqnarray*}
 We now determine the law of $\hat N$ given $\cF$. Let $\hat n = |J ^c| -   \sum_{t \notin T^*}\IND_{ \si(i_t) \in J ^c} $ be the cardinality of elements in $J^c$ whose pre-image has been revealed when the values of $\sigma(i_t), \sigma^{-1}(j_t) , t \notin  T^* $ have been revealed. By counting all possibilities for the values of $\sigma(i_t)$, $t \in T^*$, we find for all $0 \leq x \leq |T^*|$, 
$$
\dP_{\cF} ( \hat N = x) = \frac{{|T^*| \choose x} ( \hat  n )_{  x} }{Z}\quad \hbox{ with } \quad Z = \sum_{x = 0}^{|T^*|} {|T^*| \choose x} ( \hat  n )_{x} ,
$$
where we have used the Pochhamer symbol, $(\hat n)_x = \hat n(\hat n -1) \cdots (\hat n - x + 1)$.  First, since $\hat n \geq n - 2a$ and $a \leq  \sqrt{n}$, we obtain from \eqref{eq:factpow}, for some $c >0$, 
$$
Z \geq c \sum_{x = 0}^{|T^*|} {|T^*| \choose x}  n^{x}   = c ( 1+ n)^{|T^*|} \geq  c n^{|T^*|}. 
$$
From what precedes, we get, 
\begin{eqnarray*}
\dE_\cF \SBRA{P^*}  &=& \frac 1 Z \sum_{x = 0} ^{|T^*|} {|T^*| \choose x}  (n )_{x} \PAR{ 1 - \frac 1 n}^{|T^* |  - x} \PAR{ - \frac 1 n}^{x}\\
 & = &  \frac 1 Z \sum_{x = 0} ^{|T^*|} {|T^*| \choose x}  \prod_{y=0}^{x-1} ( y -  \hat n) \PAR{ 1 - \frac 1 n}^{|T^* |  - x} \PAR{ \frac 1 n}^{x} \\
& = &   \frac 1 Z \dE  \prod_{y=0}^{N-1} ( y -  \hat n ),
\end{eqnarray*}
where $N$ has distribution $\Bin(|T^* | , 1 / n)$. By Lemma \ref{le:bin}, applied to $z = 1$,  $k =|T^* | $, $p = 1 / n$ and $q = 1 / \hat n$, we deduce that, for some $c >0$, 
\begin{equation}\label{eqM:ddett}
\dE_\cF \SBRA{P^*} \leq c \PAR{ \frac{\veps}{n} }^{|T^*|} ,
\end{equation}
with $\veps =   3 a  / \sqrt n$ (since $3 |T^*| \sqrt{ z /2 } \leq 3a$).

We now evaluate $|Q|$ in \eqref{eq:defPQP}. Let $\cF_t$ be the $\sigma$-algebra generated by $ \sigma(i_s), \sigma^{-1}(j_s) , s \ne t$. For any  $ t \in [ a]$, 
\begin{equation}\label{eqM:MeftP}
\dP_{\cF_t} ( M_{i_t j_t} = 1 )  = \frac{\IND( \Omega^c _t) } { n_t } \leq \frac{ 1} { n^*}, 
\end{equation}
where $n^* = n - a +1$, $n_t = n - | \{  \sigma(i_s) : s \ne t \}|$ and $\Omega_t \in \cF_t$ is the event that for some $s \ne t$, $\sigma(i_s) = j_t$ or $\sigma^{-1}(j_s) = i_t$. 
We get, for $p \geq 2$ and $q\geq 0$,
\begin{equation} \label{eqM:rdj20}
\dE_{\cF_t}  | \underline M_{i_t j_t} ^p M_{i_t j_t}^q |  \leq \dE_{\cF_t}   | \underline M_{e_t f_t} |^2   \leq\PAR{ 1 - \frac 1 n }^2  \frac{ 1} {n^*}  + \frac{1}{n^2}   \PAR{ 1 -  \frac{ 1} { n^*} } \leq \frac{1}{n^*}. 
\end{equation}

Similarly, if $q \geq 1$, 
\begin{equation} \label{eqM:rdj01}
\dE_{\cF_t}  | \underline M_{i_t j_t}^p M_{e_t f_t}^q |  \leq \dE_{\cF_t}   M_{e_t f_t}    \leq \frac{1}{n^*}. 
\end{equation}

We also have the weak bound,
\begin{equation} \label{eqM:rdj10}
\dE_{\cF_t} |   \underline M_{i_t j_t} |    \leq \PAR{ 1 - \frac 1 n }  \frac{ 1} {n^*}  + \frac{1}{n}   \PAR{ 1 -  \frac{ 1} { n^*} } \leq \frac{2}{n^*}. 
\end{equation}

Finally, observe that the variables $S$ and $T^*$ are $\mathcal \cF_t$-measurable for any $t$. On the event $t \in T \backslash \{ S, T^*\}$, we have $M_{i_t j_t}  = 0$. It follows that if $t   \in T \backslash ( S \cup T^*)$, 
\begin{equation} \label{eqM:rdj10*}
\dE_{\cF_t} |   \underline M^{p_t}_{i_t j_t} M_{i_t j_t}^{q_t}|    =\dE_{\cF_t} |   \underline M_{i_t j_t} | = \frac 1 n  \leq \frac{1}{n^*}. 
\end{equation}

We then estimate $\dE \SBRA{  | Q |  \bigm| (S,T^*) }$ as follows. If $y_t$ is such that $p_t \geq 2$, we use \eqref{eqM:rdj20},  if  $q_t \geq 1$, we use \eqref{eqM:rdj01}. If $y_t$ is an inconsistent edge such that $p_t = 1$ and $q_t = 0$, we use \eqref{eqM:rdj10}. Finally, if $t \in T \backslash ( S \cup T^*)$,  we use \eqref{eqM:rdj10*}. From Lemma \ref{le:condexp}, we find that
$$
\dE \SBRA{  | Q |  \bigm| (S,T^*) } =  \dE \SBRA{ \prod_{t\notin  S \cup T^* }   | \underline M_{i_tj_t} ^{p_t}  M_{i_t j_t} ^{q_t}| \bigm| (S,T^*) } \leq 2^b \PAR{\frac{1}{n^*} }^{a - |S| - |T^*|}.
$$

Putting this last bound together with \eqref{eqM:ddett}, we deduce from \eqref{eqM:defPQP} and \eqref{eq:factpow}  that for some $c >0$, 
\begin{eqnarray*}
\ABS{ \dE P   } &\leq &  c \, 2^b \, \dE \PAR{  \frac {1} m }^{a - |T^*| }   \PAR{ \frac{\veps  }{ m} }^{|T^*|}   \\
& = & c  \, 2^b \, \PAR{\frac{1}{m}}^{a} \veps^{a_1} \dE \veps^{-|T \backslash T^*|}.
\end{eqnarray*}

To conclude the proof, we prove that  $\dE \veps^{-|T \backslash T^*|} \leq c$ for some constant $c>0$. The event that $\{|T \backslash T^*| \geq x \}$ is contained in the event that there are $\lceil x / 2 \rceil$ ordered pairs $(s,t)$, $s\ne t$, such  that $\sigma(i_s) = j_t $.  From the union bound, we get
$$
\dP \PAR{|T \backslash T^*| \geq x } \leq \PAR{ \frac{ a^2}{n^*} }^{\lceil x / 2 \rceil}.
$$
Indeed, the factor $( a^2 )^{\lceil x / 2 \rceil}$ accounts for the choices of pair $(s,t)$. The factor $(1/n^*)^{  \lceil x / 2 \rceil}$ is an upper bound on the probability that  $\sigma(i_s) = j_t$ (from  Lemma \ref{le:condexp} and \eqref{eqM:MeftP}). Since $a \leq k \leq \sqrt{n}$ and $\lceil x / 2 \rceil \leq x/2 +1/2$, we get from \eqref{eq:factpow},
$$
\dP \PAR{|T \backslash T^*| \geq x } \leq c \PAR{ \frac{ a}{\sqrt{n} } }^{x}.
$$
Recalling $\veps = 3a / \sqrt m$, we find 
$$
 \dE  \veps^{-|T \backslash T^*|} \leq \sum_{x=0}^\infty \veps^{-x} \dP ( |T \backslash T^*| \geq x) \leq c \sum_{x = 0}^{\infty}   \PAR{\frac 1 3}^{-x} = \frac{3c}{2}.
$$
 This concludes the proof of Proposition \ref{leM:exppath}. \end{proof}

\subsection{Path counting}

\label{subsecM:PC}
In this subsection, we give upper bounds on the operator norms of $\uB_n^{(\ell)}$ and $R^{(\ell)}_{k}$ defined by \eqref{eqM:defD} and \eqref{eqM:defR}. As in Subsection \ref{subsec:PC}, we use the high trace method.

\subsubsection{Operator norm of $\uB_{n}^{(\ell)}$}

We denote by $\| \cdot \|_1$ the $\ell^1$-norm in $\dR^{\vec E}$ and, for $e \in \vec E$, we define the unit vector $\delta_e ( f) = \IND_{e = f}$. From Gelfand's Formula,
$$
\lim_{k \to \infty} \| B^k  \delta_e \|^{1/\ell}_1 = \lim_{k \to \infty} \| (B^*)^k  \delta_e \|^{1/\ell}_1 = \rho_1, 
$$
where $\rho_1 >1$ is the Perron eigenvalue of $B$. In this paragraph, we consider a scalar $\rho >  \rho_1$. Then, there exists a constant $c_\rho \geq 1$ such that for all integers $k \geq 1$ and all $e \in \vec E$, 
\begin{equation}\label{eq:defrho}
\| (B^*)^k  \delta_e \|_1 \leq c_\rho \rho^{k}. 
\end{equation}
We will prove the following proposition. 

\begin{proposition} \label{propM:normDelta}
Let  $\rho > \rho_1$ and $1 \leq \ell  \leq  \log n$ be an integer.  Let $\sigma$ be uniformly distributed on $S^G_n$ and  let $\uB^\ell _n = \uB^\ell_n (\sigma)$ be defined as in \eqref{eqM:defD}.  
Then, \whp  
$$ \| \uB_n^{(\ell)} \| \leq ( \log n) ^{20}   \rho ^{\ell/2}.$$
\end{proposition}
To be precise, in Proposition \ref{propM:normDelta}, $\ell = \ell(n)$ may depend on $n$ but $\rho$ is constant. Beware that what is hidden behind {\em\whp}depends on $\rho$ and $G$. For the main part, we repeat the proof of Proposition \ref{prop:normDelta}. Let $m$ be a positive integer.  Arguing as in \eqref{eq:trDeltak},
\begin{eqnarray}
\|\uB_n^{(\ell)}  \| ^{2 m}  & \leq & \tr \BRA{ \PAR{  \uB_n^{(\ell)}{\uB_n^{(\ell)}}^*}^{m}  } \nonumber\\
& =  &  \sum_{\gamma }   \prod_{i=1}^{2m}  \prod_{t=1}^{\ell} \underline M_{\gamma_{i,2t-1}  \gamma_{i,2t}} ,  \label{eqM:trDeltak}
\end{eqnarray}
where the sum is over all  $\gamma = ( \gamma_1, \ldots, \gamma_{2m})$ such that $\gamma_i = (\gamma_{i,1}, \ldots, \gamma_{i,2\ell+1}) \in F^{\ell}$ (that is, non-backtracking tangle-free path) and for all $i \in [m]$, 
$$
\gamma_{2i,1} = \gamma_{2i+1, 1} \quad \hbox{ and } \quad  \gamma_{2i-1,2\ell+1} = \gamma_{2i, 2\ell+1},
$$
with the convention that $\gamma_{2m+1} = \gamma_{1}$. The product \eqref{eqM:trDeltak} does not depend on the value of $\gamma_{2i-1,2\ell+1} = \gamma_{2i, 2\ell}$, $i \in[ m]$. Moreover, if $\gamma_{2i-1,2\ell}$ and $\gamma_{2i,2\ell}$ are given and $d_{i,\ell}$ is the degree of $o_n(\gamma_{2i-1,2\ell})  = o_n(\gamma_{2i,2\ell})$, then $\gamma_{2i-1,2\ell+1} = \gamma_{2i, 2\ell+1}$ can take $(d_{i,\ell} - 1 - \IND_{\gamma_{2i-1,2\ell} \ne \gamma_{2i, 2\ell}} )$ possibles values. Hence, by setting $\gamma'_{2i,t} = \gamma_{2i, 2 \ell +1 -t }$ on the right-hand side of \eqref{eqM:trDeltak}, we get 
\begin{equation}\label{eqM:trDeltak2}
\| \uB_n^{(\ell)}  \| ^{2 m}  \leq   \sum_{\gamma \in W_{\ell,m}}  q(\gamma) \prod_{i=1}^{2m}  \prod_{t=1}^{\ell} \underline M_{\gamma_{i,2t-1}  \gamma_{i,2t}} , 
\end{equation}
where $W_{\ell,m}$ is the set of $\gamma = ( \gamma_1, \ldots, \gamma_{2m}) \in   \vec E_n^{2 \ell \times 2m}$ such that for all $i \in [2m]$, $\gamma_i = (\gamma_{i,1}, \ldots, \gamma_{i,2\ell})$ is non-backtracking and tangle-free, and for all $i \in [m]$, 
\begin{equation}\label{eqM:defbound}
o_n(\gamma_{2i,1}) = o_n(\gamma_{2i-1, 2\ell})  \quad \hbox{ and } \quad  \gamma_{2i+1,1} = \gamma_{2i, 2\ell},
\end{equation}
with the convention that $\gamma_{2m+1} = \gamma_{1}$. Finally,
\begin{equation}\label{eqM:defqg}
q(\gamma)   = \prod_{i=1}^{m} ( d_{i,\ell} - 1 - \IND_{\gamma_{2i-1,2\ell} \ne \gamma_{2i, 2\ell}}  ) \leq (d-1)^{m}.
\end{equation}

For $\gamma \in \vec E_n^{2 \ell \times 2m}$, we define $V_\gamma$, $E_\gamma$, $G_\gamma$ as in Definition \ref{defM1}.  For $\gamma, \gamma' \in \vec E_n^{2 \ell \times 2m}$, we consider the isomorphism class $\gamma \sim \gamma'$, if there exists permutations $(\alpha_v)_{v \in V} \in  S_n ^{V} $ such that, with $\gamma_{i,t} = (e_{i,t},j_{i,t})$, $\gamma'_{i,t} = (e'_{i,t},j'_{i,t})$,  for all $(i,t) \in [2m] \times [2\ell]$, $e'_{i,t} = e_{i,t}$ and $j'_{i,t} = \alpha_{o(e_{i,t})} ( j_{i,t})$.  We may define a canonical element in each isomorphic class as follows. For $v  \in V$, we define $V_\gamma (v) = \{ j_{i,t} : o(e_{i,t}) = v , (i,t) \in [2m] \times [2\ell] \}$.  We say that a path $\gamma \in \vec E_n^{2 \ell \times 2m}$ is canonical if for all $v \in V$, $V_\gamma (v)  = \{ (v,1), \ldots, (v,|V_\gamma(v)|) \}$  and the elements of $V_\gamma(v)$  are visited in the lexicographic order. Note that $\gamma \in W_{\ell,m}$ and $\gamma' \sim \gamma$ implies that $\gamma' \in W_{\ell,m}$. The number of elements in each isomorphic class is easily upper bounded.

\begin{lemma}\label{leM:isopath}
Let $\gamma \in \vec E_n^{2 \ell \times 2m}$ with $|V_\gamma| = s $. Then $\gamma$  is isomorphic to at most 
$
  n^s
$ 
elements in $\vec E_n^{2 \ell \times 2m}$.  
\end{lemma}

\begin{proof}
For $v \in V$, let $s_v = |V_\gamma (v)|$. By construction, $\sum_v s_v = s$ and $\gamma$ is isomorphic to 
$
\prod_v  n ! /(n - s_v) !   \leq \prod_v n^{s_v} = n ^s
$
elements.
\end{proof}

We now upper bound the number of isomorphic classes in $W_{\ell,m}$. The next lemma contains the main noticeable difference with Subsection \ref{subsec:PC}.

\begin{lemma}\label{leM:enumpath}
Let $\cW_{\ell,m} (s,a) $ be the subset of canonical paths in $W_{\ell,m}$ with $|V_\gamma| = s$ and $|E_\gamma |= a$. Let $g = a - s +1$. If $g < 0$ then $\cW_{\ell,m} (s,a)$ is empty. Otherwise,  there exists a constant $c$ depending on $\rho$ and $G$ such that, 
$$
| \cW _{\ell,m} (s,a) | \leq  \rho^{s} (c \ell  m )^{8 m g + 10 m}.
$$
\end{lemma}

\begin{proof}
Let $\gamma \in W_{\ell,m}$ with $|V_\gamma| = s$, $|E_\gamma| = a$ We set $\gamma_{i,t} = (e_{i,t}, j_{i,t})$ and $v_{i,t} = o(e_{i,t})$. From \eqref{eqM:defbound}, $G_\gamma$ is connected and the first statement follows. For the bound on $\cW _{\ell,m} (s,a) $, we start by recalling some definitions used in the proof of Lemma \ref{le:enumpath}. For $ (i,t) \in [2m] \times [\ell]$, let $x_{i,t} = ( \gamma_{i,2t-1}, \gamma_{i,2t} )$ and $y_{i,t} =  \{ \gamma_{i,2t-1}, \gamma_{i,2t} \} \in \cE_\gamma$. We explore the sequence $(x_{i,t})$ in lexicographic order denoted by $\preceq$. We say that $(i ,t)$ is a {\em first time}, if $v_{i,2t}$ has not been seen before (that is $v_{i,2t} \ne v_{i', t'}$ for all $(i',t') \preceq (i,2t)$). The edge $y_{i,t}$ will then be called a {\em tree edge}. By construction, the graph with edge set $\{\{v_{i,2t-1}, v_{i,2t} \} : (i,t) \hbox{ first time}\}$ is a tree. An edge  $y_{i,t}$ which is not a tree edge, is called an {\em excess edge}, and we then say that $(i,t)$ is an {\em important time}. We denote the sequence of important times by $(i,t^i_{q})$, with $ q \in[ q_i]$ and set $t^i_{0} = 0$, $t^i_{q_i+1} = \ell+1$. We observe that between $(i,t^i_{q-1})$ and $(i,t^i_q)$, there is a $t^i_{q-1} < \tau^i_{q-1} \leq t^i_{q}$ such that  $y_{i,t}$ is a tree edge for all $t^i_{q-1}<  t < \tau^i_{q-1}$ and then $(i,t)$ is a first time for all $\tau^i_{q-1} \leq t <  t^i_{q}$. We set $s^i_{q-1} = t^i_q - \tau^i_{q-1}$.   Since every vertex in $V_\gamma$ different from $v_{1,1}$ has its associated tree edge,
\begin{equation*}\label{eqM:defchi}
 \ABS{ \BRA{ y \in \cE_\gamma : \hbox{ $y$ is an excess edge}} } = a - s +1 = g,
\end{equation*}
and
\begin{equation}\label{eqM:sumsiq}
\sum_{i = 1}^{2m}\sum_{q=0}^{q_i} s^i_{q} = s-1.
\end{equation}
We mark the important times $(i,t^i_q)$, $q \in [q_i]$  by the vector $(\gamma_{i,2t^q_i},\gamma_{i,2\tau^i_q-1},\veps_{i,q})$ where, by convention, for $q = q_i$, $\gamma_{i,2\ell+1} = \gamma_{i+1, 1}$ and $\veps_{i,q} = (e_{i,2\tau^i_q},  \ldots, e_{i,2 t^i_{q+1} -1})$ is the projection of the  path $(\gamma_{i,2\tau^i_q}, \cdots, \gamma_{i,2 t^i_{q+1} -1})$  on $G$. Similarly, for $q=0$, we add a starting mark $(\gamma_{i,2\tau^i_0-1},\veps_{i,0})$ with $\veps_{i,0} = (e_{i,2\tau^i_0 },  \cdots, e_{i,2 t^i_{1} -1})$.  We observe two facts (i) there is a unique non-backtracking path between two vertices of a tree, and (ii) if $(i,t)$ is a first time then  $\gamma_{i,2t} = ( e_{i,2t} , m+1)$ and $\gamma_{i,2t+1} = (e_{i,2t+1} , m+1)$, where $m$ is the number of  first times $(i',t') \preceq (i,t)$, such that $v_{i',2t'-1} = v_{i,2t}$ (since $\gamma$ is canonical). It follows that we can reconstruct $\gamma \in \cW_{\ell,m}(s,a)$ from the starting marks, the position of the important times and their marks. It gives a first encoding.

This encoding may have large number of important times. To improve it, we partition important times into three categories, {\em short cycling}, {\em long cycling} and {\em superfluous} times. For each $i$, consider the first time $(i,t_1)$ such that $v_{i,2t_1} \in \{ v_{i,1}, \ldots, v_{i,2t_1-1} \}$. If such time exists, the short cycling time $(i,t)$ is the last important time $(i,t) \preceq (i,t_1)$. We have $t = t^i_q$ for some $1 \leq q \leq q_i$.  Let $1 \leq \sigma \leq t_1$ be such that $v_{i,2t_1} = v_{i,2\sigma -1}$. By assumption, $C_i = (\gamma_{i,2\sigma-1},\cdots, \gamma_{i,2t_1})$ will be the unique cycle visited by $\gamma_i$. We denote by $(i,\hat t) \succeq (i,t)$ the first time that $\gamma_{i,2\hat t-1}$ in not in $C_i$ (by convention $\hat t = \ell+1$ if $\gamma_i$ remains on $C_i$). We modify the mark of the short cycling time $(i,t) = (i,t^i_q)$ and redefine it as $(\gamma_{i,2t^q_i}, \gamma_{i,2t_1}, \hat  t, \gamma_{i,2\hat t  -1},\gamma_{i,2\hat \tau^i_q  -1} ,\hat \veps_{i,q})$, where $(i,\hat \tau^i_q)$, $\hat \tau^i_q \geq \hat t$, is the first time that $y_{i,\hat \tau^i_q}$ is not on the tree constructed so far and  $\hat \veps_{i,q} = (e_{i,2\hat\tau^i_q},  \cdots, e_{i,2 t^i_{q+1} -1})$. We also redefine $s^i_q $ as $t^i_{q+1} - \hat \tau^i_q$. Important times $(i,t)$ with $1 \leq t < \sigma$ or $\tau \leq t \leq \ell$ are called long cycling times. The other important times are called superfluous. Then, for each $i \in [2 m]$, the number of long cycling times $(i,t)$ is bounded by  $g-1$ (since there is at most one cycle, no edge of $\cE_\gamma$ can be seen twice outside those of $C_i$, the $-1$ coming from the fact the short cycling time is an excess edge).  We note also that, if $(i,q)$ is a short cycling time and $(i,q')$ is the next long cycling time then $\hat \tau^i_q$ and $\hat \veps_{i,q}$ are equal to $\tau^i_{q'-1}$ and $\veps_{i,q'-1}$. Moreover, $s^i_p = 0$ for all $q \leq p < q' -1$.

It gives our second encoding. We can reconstruct $\gamma$ from the starting marks, the positions of the long cycling  and the short cycling times and their marks. For each $i$, there are at most $1$ short cycling time and $ g-1$ long cycling times. There are at most $ \ell ^{2m g }$ ways to position them. There are at most $\ell^{2m (g+1)}$ possibilities for the $\tau_i^q$'s (of the starting marks, long and short cycling times).  There are $r$ possible choices of $e_{1,1}$.  The number of distinct $\gamma_{i,t}$ in $\gamma$ is at most $h = 4 \ell m$. Also for any integer $s\geq 1$ and $e \in \vec E$,
$
\| (B^*)^s \delta_e \|_1
$ is equal to the number proper of non-backtracking walks $(e_1, \ldots , e_{2s+1})$ in $G$ with $e_1 = e$.  Thus, there are at most $h^2 \|(B^*)^{s^i_q} \delta_{e_{i,2\tau^i_q-1}}\|_1$ different possible marks for the long cycling time $t^i_q$ and $h^4\ell \|(B^*)^{s^i_q} \delta_{e_{i,2\hat \tau^i_q-1}}\|_1$ possible marks for the short cycling time $t^i_q$. Similarly, there are at most $h\|(B^*)^{s^0_q} \delta_{e_{i,2\hat \tau^i_0-1}}\|_1$ possibilities for the $i$-th starting mark.  Finally, from \eqref{eq:defrho} and \eqref{eqM:sumsiq},
$$
\prod_{i = 1}^{2m} \prod_{ q = 0} ^{q_i} \|(B^*)^{s^i_q} \delta_{e_{i,2  \tau^i_q-1}}\|_1 \leq c_\rho^{2 m (g+1)}\rho^{s -1}.
$$
We deduce that    $| \cW _{\ell,m} (s,a) | $ is at most
$$
 r  \ell ^{2m g} \ell^{2m (g+1)}\PAR{h^{2}}^ {2m(g-1)} \PAR{h^4\ell}^{2m} h^{2m} c_\rho^{2 m (g+1)}\rho^{s -1}. 
$$
This concludes the proof by setting $c$ large enough. \end{proof}

For $\gamma \in W_{\ell,m}$, the average contribution of $\gamma$ in \eqref{eqM:trDeltak2} is
\begin{equation}\label{eqM:defmug}
\mu(\gamma) =   \dE \prod_{i=1}^{2m}  \prod_{t=1}^{\ell} \underline M_{\gamma_{i,2t-1}  \gamma_{i,2t}}.
\end{equation}

Note that if $\gamma \sim \gamma'$ then $\mu(\gamma) = \mu(\gamma')$. The proof of Lemma \ref{le:meanpath} gives immediately the following. 
\begin{lemma}\label{leM:meanpath}
There is a constant $c > 0$ such that, if $2 \ell m \leq \sqrt{  n}$ and $\gamma \in W_{\ell,m} $ with $|V_\gamma| = s$, $|\cE_\gamma |= a$ and $g = a - s+1$, we have
$$
\ABS{ \mu(\gamma)} \leq  c^{g + m}  \PAR{\frac{ 1 }{n} }^a   \PAR{\frac{ (6\ell m)^2 }{n} } ^{  (a - 2g - (\ell+2) m)_+}.  
$$
\end{lemma}

\begin{proof}[Proof of Proposition \ref{propM:normDelta}]
For $n \geq 3$, we define 
\begin{equation}\label{eqM:choicem}
m = \left\lfloor  \frac{ \log n }{17 \log (\log n)} \right\rfloor.
\end{equation}
For $n$ large enough, $m$ is positive, $n ^{  1 / (2m) } = o ( \log n )^{9}$ and $\ell m = o ( \log n) ^2$. Hence, arguing as in the proof of Proposition  \ref{prop:normDelta}, from \eqref{eqM:trDeltak2} and Markov inequality, it suffices to prove that 
\begin{equation}\label{eqM:boundS}
S = \sum_{\gamma \in W_{\ell,m} }   | \mu(\gamma) | \leq n  (c \ell m )^{10  m} \rho^{ \ell' m},
\end{equation}
where $\ell' = \ell +2$ and $\mu(\gamma)$ was defined in \eqref{eqM:defmug}. Using Lemma \ref{leM:isopath}, Lemma \ref{leM:enumpath} and Lemma \ref{leM:meanpath}, we find, with $g = g(a,s) = a - s +1$, for some new constant $c'>0$, all $n$ large enough,
\begin{eqnarray*}
S & \leq &  \sum_{s=1}^{\infty} \sum_{a = s - 1} ^{ \infty } n^s    \rho^{s} (c \ell  m )^{8 m g + 10  m} c^{g + m}  \PAR{\frac{ 1 }{n} }^a   \PAR{\frac{ (6\ell m)^2 }{n} } ^{  (a -2g  - \ell' m)_+} \nonumber\\
& \leq &   \sum_{s=1}^{\infty} \sum_{g=0} ^{\infty} n  (c'  \ell  m)^{10 m}    \rho^{s}  \PAR{\frac{ (c' \ell m) ^{8m} }{ n } }^{g} \PAR{\frac{ (6\ell m)^2 }{n} } ^{  (s - g - 1 -  \ell ' m)_+},\\
& \leq &  S_1 + S_2  + S_3 \label{eqM:Sboundddd}
 \end{eqnarray*}
where $S_1$ is the sum over $\{ 1 \leq s \leq \ell' m, g \geq 0 \}$, $S_2$ over $\{\ell' m+1 \leq s , 0 \leq g \leq s - 1 - \ell' m \}$, and $S_3$ over $\{ \ell' m+1 \leq s , g \geq s - \ell' m\}$. Since $\rho > 1$, we have
\begin{eqnarray*}
S &\leq &     (c' \ell   m)^{10 m}   \sum_{s=1}^{ \ell' m} \rho^{s} \sum_{g=0} ^{ \infty } \PAR{\frac{  (c' \ell m) ^{8m} }{ n } }^{g} \\
& \leq & \PAR{\frac{\rho}{ 1-\rho}}  n (c'  \ell  m )^{10 m} \rho^{\ell'm} \sum_{g=0} ^{ \infty } \PAR{\frac{  (c' \ell m) ^{8m} }{ n } }^{g}.
\end{eqnarray*}
For our choice of $m$ in \eqref{eq:choicem}, for $n$ large enough, 
$$
\frac{   (c \ell m) ^{8m} }{ n } \leq \frac{( \log n )^{16m} }{n} \leq n^{-1/17}. 
$$
In particular, the above geometric series converges  and, adjusting the value of $c$,  the right-hand side of \eqref{eqM:boundS} is an upper bound for $S_1$.  The treatment of $S_2$ and $S_3$ is exactly parallel to the treatment of $S_2$ and $S_3$ in  the proof of Proposition  \ref{prop:normDelta} with $\rho$ replacing $d-1$.  This concludes the proof. 
\end{proof}

\subsubsection{Operator norm of $R^{(\ell)}_{k}$}

We now repeat the argument for $R^{(\ell)}_{k}$. This a routine extension of the previous paragraph and Subsection \ref{subsec:Rlk}. 
\begin{proposition} \label{propM:normR}
Let  $\rho > \rho_1$ and $1 \leq \ell  \leq  \log n$ be an integer.  Let $\sigma$ be uniformly distributed on $S^G_n$ and  for $k \in [\ell]$, let $R_k^{(\ell)} = R_k^{(\ell)} (\sigma)$ be defined as in \eqref{eqM:defR}.  
Then, \whp  
$$ \sum_{k=1} ^\ell \| R_k^{(\ell)} \| \leq ( \log n) ^{40}   \rho ^{\ell}.$$
\end{proposition}
Let $m$ be a positive integer and $k \in [\ell]$. Arguing as in  \eqref{eq:trDeltak20}-\eqref{eqM:trDeltak2}, we find
\begin{eqnarray}
\| R^{(\ell)}_{k}  \| ^{2 m}  &\leq & \sum_{\gamma \in W^k_{\ell,m}} q (\gamma)  P_k (\gamma), \label{eqM:trDeltak20}
\end{eqnarray}
where $q(\gamma)$ was defined in \eqref{eqM:defqg} and $W^k_{\ell,m}$, $P_k(\gamma)$ are defined as follows. Let $k_i \in \{ k , \ell - k +1 \}$ be as in \eqref{eq:defki}. The set $W^k_{\ell,m}$ is the collection of  $\gamma = ( \gamma_1, \ldots, \gamma_{2m}) \in \vec E_n^{2\ell  \times 2m}$ such that for all $i \in [2m]$, $\gamma_i  = (\gamma_{i,1}, \ldots, \gamma_{i,2\ell})$ is non-backtracking and tangled but,  $$\gamma'_i = ( \gamma_{i,1}, \ldots , \gamma_{i,2k_i-2}) \quad  \hbox{ and } \quad \gamma''_i =  ( \gamma_{i,2k_i+1}, \ldots , \gamma_{i,2\ell})$$ are tangle-free.  We also have the boundary condition \eqref{eqM:defbound}. Finally, in \eqref{eqM:trDeltak20}, for $\gamma \in W^k_{\ell,m}$, we have set 
$$P_k (\gamma) =  \prod_{i=1}^{2m}  \prod_{t=1}^{k_i-1}  M^{\veps_i}_{\gamma_{i,2t-1}  \gamma_{i,2t}}   \prod_{t=k_i+1}^{\ell} M^{\veps_i}_{\gamma_{i,2t-1}}, $$
where $M^{\veps_i} = \underline M$ if $i$ is odd and $M^{\veps_i} = M$ if $i$ is odd.

As in the previous subsection, for each $\gamma \in W^k_{\ell,m} \subset \vec E_n^{2\ell \times 2m}$, we associate a multigraph $G_\gamma$ as in Definition \ref{defM1}.  We also partition  $W^k_{\ell,m}$ into isomorphism classes exactly as in the previous subsection. We define a canonical element in each isomorphic class thanks to the lexicographic order.  We also introduce the multigraph $G^k_\gamma = \cup_{i} (G_{\gamma'_i} \cup G_{\gamma''_i})$ where $G_{\gamma'_i}$, $G_{\gamma''_i}$ are as in Definition  \ref{defM1}. More precisely, the vertex set $G^k_\gamma$ of $V^k_\gamma = \cup_{i} (V_{\gamma'_i} \cup V_{\gamma''_i}) = \{ o_n(\gamma_{i,t}) : (i,t) \in [2m] \times [2\ell] : t  \notin \{2k_i-1,2k_i \}) \}$ and the set of visited pairs of half-edges is $\cE^k_\gamma = \cup_i (\cE_{\gamma'_i} \cup \cE_{\gamma''_i}) = \{ \{ \gamma_{i,2t-1}  , \gamma_{i,2t}  \} :   (i,t) \in [2m] \times [\ell], t \ne k_i \}$.

Since $o_n(\gamma_{i,2t}) = o_n(\gamma_{i,2t+1})$, we have $V^k_\gamma = V_\gamma$. Thus, Lemma \ref{leM:isopath} implies the following lemma.

\begin{lemma}\label{leM:isopath0}
Let $\gamma \in  W^k_{\ell,m}$ with $|V^k_\gamma| = s $. Then $\gamma$  is isomorphic to at most 
$
  n^s
$ 
elements in $ W^k_{\ell,m}$.  
\end{lemma}

We need an upper bound on the number of isomorphism classes in $W^k_{\ell,m}$.

\begin{lemma}\label{leM:enumpath0}
Let $\cW^k_{\ell,m} (s,a) $ be the subset of canonical paths in $W^k_{\ell,m}$ with $|V_\gamma| = s$, $|\cE_\gamma |= a$. Let $g = a - s +1$. If $g \leq 0$, $\cW^k_{\ell,m} (s,a)$ is empty. Otherwise, there exists a constant $c$ depending on $\rho$ and $G$ such that, 
$$
| \cW^k_{\ell,m} (s,a) | \leq  \rho^s (c  \ell  m )^{16 m g + 22 m}.
$$
\end{lemma}

\begin{proof}
The first claim is proved as in Lemma \ref{le:enumpath0}: for all $\gamma \in W_{\ell,m} ^k$, each connected component of $G_\gamma^k$ has a cycle.

For the second claim, We adapt the proof of Lemma \ref{leM:enumpath}, using the extra input of the proof of Lemma \ref{le:enumpath0}. For $i \in [2m]$, we define for $t \in [\ell]\backslash\{k_i\}$, $x_{i,t} = ( \gamma_{i,2t-1}, \gamma_{i,2t} )$. We then explore the sequence $(x_{i,t})$, $(i,t) \in T = \{ (i,t) \in [2m] \times [\ell] : t \ne k_i \}$ in lexicographic order. We denote by $(i,t)_-$ the preceding element in $T$ for the lexicographic order (with the convention $(1,1)_- = (1,0)$). For $(i,t) \in T$, we set $y_{i,t} = \{  \gamma_{i,2t-1}, \gamma_{i,2t} \}$. For each $(i,t) \in T$, we build a growing spanning forest $F_{i,t}$ of the graph visited so far as follows. The forest $F_{1,0}$ has a single vertex $\gamma_{1,1} = (1,1)$. By induction, for $(i,t) \in T$,  if the addition of $y_{i,t}$ to $F_{(i,t)_-}$  creates a cycle, we set $F_{i,t} = F_{(i,t)_-}$, and we say that  $y_{i,t}$ is an excess edge. Otherwise, we say that $(i,t)$ is a first time, that $y_{i,t}$ is a tree edge, and we define $F_{i,t}$ as the union of $F_{(i,t)_-}$ and $y_{i,t}$.

Arguing exactly as in the proof of Lemma \ref{le:enumpath0},  we find that there are at most  $g$ excess edges in each connected component of $G^k_{\gamma}$.

We may now repeat the proof of Lemma \ref{leM:enumpath}. The only difference is that, for each $i$, we use that $\gamma'_i$ and $\gamma''_i$ are tangled free, it gives short cycling times and long cycling times for both $\gamma'_i$ and $\gamma''_i$.  We also need a starting mark for $\gamma''_i$ equal to $(\gamma_{i,2k_i-1},\gamma_{i,2k_i}, \gamma_{i,2\tau-1}, \veps''_{i,0})$ where  $(i,\tau)$ is the next time that $y_{i,\tau}$ will not be a tree edge of the forest $F_{(i,k_i+1)_-}$ constructed so far and $\veps''_{i,0} = (e_{i,2 \tau }, \ldots , e_{i,2t''-1})$ is the projection of $(\gamma_{i,2 \tau}, \ldots , \gamma_{i,2t''-1})$ on $G$ and $(i,t'')$ is the next important time.  If $p$ is the number of connected components in $G^k_\gamma$. The analog of \eqref{eqM:sumsiq} is
\begin{equation*}\label{eqM:sumsiq0}
\sum_{i = 1}^{2m}\PAR{ \sum_{q=0}^{q'_i} (s')^i_{q} + \sum_{q=0}^{q''_i} (s'')^i_{q} } = s-p \leq s - 1,
\end{equation*}
where  $q'_i$ and $q''_i$ is the number of important times in $\gamma'_i$ and $\gamma''_i$, and $(s')^i_q$ and $(s'')^i _q$ are the number of new vertices between two successive important times in $\gamma'_i$ and $\gamma''_i$.

Then, for each $i$, there are at most $2$ short cycling times and $2 (g-1)$ long cycling times (since each connected component has most $g$ excess edges). There are at most $ \ell  ^{4m g}$ ways to position these times. The number of distinct half-edges $\gamma_{i,t}$ in $\gamma$ is at most $h = 4 \ell m$.  Arguing as in Lemma \ref{leM:enumpath}, we get that  $| \cW^k_{\ell,m} (s,a) | $ is upper bounded by 
$$
 r \ell ^{4 m g} \ell^{4 m (g+1)}\PAR{h^{2}}^ {4m(g-1)} \PAR{h^4\ell}^{4m} h^{2m}c_\rho^{4 m (g +1)}\rho^{s -1}   ( r^2 h^2  )^{2m},
$$
where the factor $( r^2 h^2)^{2m}$ accounts for the extra starting marks of $\gamma''_i$, $i \in [2m]$ (there are at most $r$ possibilities for $\gamma_{i,2k_i-1}$ since $o_n(\gamma_{i,2k_i-1}) = o_n(\gamma_{i,2k_i})$, $r h$ possibilities for $\gamma_{i,2k_i}$ and $h$ possibilities for $\gamma_{i,2\tau-1}$).  Taking $c$ large enough, we obtain the claimed statement. \end{proof}

For $\gamma \in W^k_{\ell,m}$, the average contribution of $\gamma$ in \eqref{eqM:trDeltak2} is
\begin{equation*}\mu_k(\gamma) = \dE P_k (\gamma) = \dE \prod_{i=1}^{2m}  \prod_{t=1}^{k_i-1}  M^{\veps_i}_{\gamma_{i,2t-1}  \gamma_{i,2t}}   \prod_{t=k_i+1}^{\ell} M^{\veps_i}_{\gamma_{i,2t-1}}.
\end{equation*}
If $\gamma \sim \gamma'$ then $\mu_k (\gamma) = \mu_k (\gamma')$. A straightforward extension of Lemma \ref{le:meanpath0} gives the following. 
\begin{lemma}\label{leM:meanpath0}
There is a universal constant $c > 0$ such that, if $6 \ell m \leq \sqrt{  n}$ and $\gamma \in W^k_{\ell,m} $ with $|V^k_\gamma| = s$, $|\cE^k_\gamma |= a$ and $g = a - s +1$, we have
$$
\ABS{ \mu_k(\gamma)} \leq  c^{g+m}  \PAR{\frac{ 1 }{n} }^a.  
$$
\end{lemma}

\begin{proof}[Proof of Proposition \ref{propM:normR}]
For $n \geq 3$, we define 
\begin{equation}\label{eqM:choicem0}
m = \left\lfloor  \frac{ \log n }{33 \log (\log n)} \right\rfloor.
\end{equation}
Since $\ell \leq \log n$, for $n$ large enough, $m$ is positive and $\ell m = o ( \log n) ^2$. It suffices to prove that for some constant $c >0$ and all $k \in [\ell]$, for all $n$ large enough
\begin{equation}\label{eqM:boundS0}
S_k = \sum_{\gamma \in W^k_{\ell,m} }   | \mu_k(\gamma) | \leq (c \ell  m )^{38 m} \rho^{2 \ell m},
\end{equation}
Indeed, from \eqref{eqM:trDeltak20}, this implies that 
$$
\dE \sum_{k=1} ^\ell \| R^{(\ell)}_{k}  \| ^{2 m} \leq \ell (d-1)^{m} (c \ell  m )^{38 m} \rho^{2 \ell m}.
$$ 
It then remains to use Markov inequality.

We now prove \eqref{eqM:boundS0}. Using  Lemma \ref{leM:isopath0}, Lemma \ref{leM:enumpath0} and Lemma \ref{leM:meanpath0}, we obtain, with $g = a - s +1$,
\begin{eqnarray*}
S_k & \leq &  \sum_{s=1}^{2 \ell m} \sum_{a = s} ^{ \infty } n^s \rho ^s    (c \ell  m )^{16 m g + 22 m} c^{g+ m } \PAR{\frac{ 1 }{n} }^a \\
& = &  \sum_{s=1}^{2 \ell m} \sum_{h = 0} ^{ \infty }  \rho ^s    (c \ell  m )^{16 m h + 38 m} c^{ h+ m +1} \PAR{\frac{ 1 }{n} }^h 
 \end{eqnarray*}
where, at the last line, we have done the change of variable $a = h +s$. Since $\rho >1$, we get for some  $c ' >0$, for all $n$ large enough, 
\begin{eqnarray*}
S_k & \leq  &    (c' \ell  m )^{38 m}   \sum_{s=1}^{ 2 \ell m } \rho^{s} \sum_{h=0} ^{\infty} \PAR{\frac{ c (c \ell m) ^{16 m} }{ n } }^{h} \\
& \leq &  \PAR{ \frac{\rho}{\rho-1}} (c' \ell  m )^{38 m}  \rho^{2 \ell m}  \sum_{h=0} ^{\infty} \PAR{\frac{ c (c \ell m) ^{16 m} }{ n } }^{h}.
\end{eqnarray*}
For our choice of $m$ in \eqref{eqM:choicem0}, we have, for $n$ large enough, 
$
   (c \ell m) ^{16 m} /  n   \leq n^{-1/33}. 
$
Hence, the above geometric series converges. Adjusting the value of the constant $c$, the right-hand side of \eqref{eqM:boundS0} is an upper bound for $S_k$. \end{proof}

\subsection{Proof of Theorem \ref{thM:main}}

\label{subsecM:end}

All ingredients are finally gathered. Recall that $d \geq 3$ defined in \eqref{eq:deftheta} is the largest degree of $G$. Let $1 < \rho_1 \leq d-1$ be the Perron eigenvalue of $B$. We fix  $\veps > 0$ and some  $0 < \kappa < 1/4$. We consider a sequence $\ell = \ell (n)$ with $\ell \sim \kappa \log_{d-1} n$. We may take $\rho \leq d-1$ in \eqref{eq:defrho} such that $\sqrt \rho \leq \sqrt \rho_1 + \veps$.  By Lemma \ref{leM:tangle} and Proposition \ref{leM:decompBl}, if $\Omega$ is the event that $G_n$ is $\ell$-tangle free, 
\begin{eqnarray*}
\dP \PAR{|\lambda_1 | \geq \sqrt{\rho_1}  +  2 \veps} & \leq &\dP \PAR{|\lambda_1 | \geq \sqrt {\rho} + \veps ; \Omega} + o(1) \\
& \leq & \dP \PAR{ J^ { 1 /  \ell}  \geq   \sqrt {\rho} + \veps  } + o(1), 
\end{eqnarray*}
where $J = \|  \uB^{(\ell)}_{n} \|  + \frac 1{ n}   \sum_{k = 1}^\ell \| R_{k}^{(\ell)} \|.$
However, by Propositions \ref{propM:normDelta}-\ref{propM:normR}, \whp
\begin{align*}
J  \leq ( \log n )^{15} \rho^{\ell/2}   +  \frac {(\log n)^{40}  }{ n} \rho^{\ell}  \leq ( \log n )^{15} \rho^{\ell/2} + o(1),
\end{align*} since $\rho^\ell \leq  n^{\kappa +o(1)} $. Finally, for our choice of $\ell$, $(\log n)^{15 / \ell} = 1 + O ( \log \log n / \log n )$.

\bibliographystyle{abbrv}
\bibliography{bib}

\bigskip
\noindent
 Charles Bordenave \\
 Institut de Math\'ematiques de Marseille. CNRS and Aix-Marseille University. \\
39, rue F. Joliot Curie. 13453 Marseille Cedex 13.  France. \\
\noindent
{E-mail:} \href{charles.bordenave@univ-amu.fr}{charles.bordenave@univ-amu.fr} \\

\end{document}